\documentclass[reqno]{amsart}
\usepackage[latin9]{luainputenc}
\usepackage{xcolor}
\usepackage{enumitem}
\usepackage{tipa}
\usepackage{amstext}
\usepackage{amsthm}
\usepackage{amssymb}
\usepackage{esint}
\PassOptionsToPackage{normalem}{ulem}
\usepackage{ulem}
\usepackage[unicode=true,pdfusetitle,
 bookmarks=true,bookmarksnumbered=false,bookmarksopen=false,
 breaklinks=false,pdfborder={0 0 0},pdfborderstyle={},backref=false,colorlinks=true]
 {hyperref}

\makeatletter

\providecolor{lyxadded}{rgb}{0,0,1}
\providecolor{lyxdeleted}{rgb}{1,0,0}

\DeclareRobustCommand{\lyxsout}[1]{\ifx\\#1\else\sout{#1}\fi}

\numberwithin{equation}{section}
\numberwithin{figure}{section}
\theoremstyle{plain}
\newtheorem{thm}{\protect\theoremname}
\theoremstyle{definition}
\newtheorem{defn}[thm]{\protect\definitionname}
\theoremstyle{plain}
\newtheorem{lem}[thm]{\protect\lemmaname}
\theoremstyle{remark}
\newtheorem{rem}[thm]{\protect\remarkname}
\theoremstyle{definition}
\newtheorem{example}[thm]{\protect\examplename}
\theoremstyle{plain}
\newtheorem{cor}[thm]{\protect\corollaryname}

\usepackage{pdfsync}
\usepackage{graphics}
\usepackage{epstopdf}
\usepackage[all]{xy}
\usepackage{amscd}
\usepackage{enumitem}
\usepackage{mathtools}
\usepackage{bbold}
\setlist[enumerate]{leftmargin=*,label=(\roman*),align=left}


\newcommand{\xyR}[1]{ \makeatletter
\xydef@\xymatrixrowsep@{#1} \makeatother} 
\newcommand{\xyC}[1]{ \makeatletter
\xydef@\xymatrixcolsep@{#1} \makeatother} 
\entrymodifiers={++[ ][F]} 

\newcommand{\ra}{\longrightarrow}

\newcommand{\field}[1]{\mathbb{#1}}
\newcommand{\R}{\field{R}} 
\newcommand{\N}{\field{N}} 




\newcommand{\eps}{\varepsilon} 
\renewcommand{\phi}{\varphi}
\newcommand{\diff}[1]{\ifmmode\mathchoice{\hbox{\rm d}#1}  
 {\hbox{\rm d}#1}  
 {\scalebox{0.75}{$\hbox{\rm d}#1$}}  
 {\scalebox{0.35}{$\hbox{\rm d}#1$}}  
 \fi} 

\newcommand{\abs}[2][\empty]{\ifx#1\empty\left|#2\right|%
\else#1\vert #2 #1\vert\fi}


\newcommand{\Coo}{\mbox{\ensuremath{\mathcal{C}}}^{\infty}} 



\newcommand{\st}[1]{{#1^\circ}} 


\newcommand{\Rtil}{\widetilde \R} 

\newcommand{\sint}[1]{\langle#1\rangle} 
\newcommand{\Eball}{B^{{\scriptscriptstyle \text{\rm E}}}} 


\newcommand{\fcmp}{\Subset_{\text{f}}}
\newcommand{\frontRise}[2]{\ifmmode\mathchoice{{\vphantom{#1}}^{\scalebox{0.6}{$#2$}}}  
 {{\vphantom{#1}}^{\scalebox{0.56}{$#2$}}}  
 {{\vphantom{#1}}^{\scalebox{0.47}{$#2$}}}  
 {{\vphantom{#1}}^{\scalebox{0.35}{$#2$}}}\fi} 
\newcommand{\RC}[1]{\frontRise{\R}{#1}\Rtil}
\newcommand{\rcrho}{\RC{\rho}}
\newcommand{\rti}{\RC{\rho}}

\newcommand{\hyperN}[1]{	\frontRise{\N}{#1}\widetilde{\N}}
\newcommand{\hypNr}{\hyperN{\rho}}
\newcommand{\hypNs}{\hyperN{\sigma}}
\newcommand{\nint}{\text{ni}}
\newcommand{\hyperlimarg}[3]{\mathchoice{\frontRise{\lim}{\raisebox{-0.05em}{$#1\hspace{-0.67em}$}}\lim_{#3\in \hyperN{#2}\,}}
{\frontRise{\lim}{#1\hspace{-0.25em}}\lim_{#3\in \hyperN{#2}\,}}
{\frontRise{\lim}{#1\hspace{-0.25em}}\lim_{#3\in \hyperN{#2}\,}}
{\frontRise{\lim}{#1\hspace{-0.25em}}\lim_{#3\in \hyperN{#2}\,}}}
\newcommand{\hyperlim}[2]{\hyperlimarg{#1}{#2}{n}}

\newcommand{\subzero}{\subseteq_{0}}
\newcommand{\dom}[1]{\text{dom}(#1)}

\newcommand{\sbpt}[1]{#1_{\text{\rm s}}}
\newcommand{\hyplimsuparg}[3]{\mathchoice{\frontRise{\limsup}{\raisebox{-0.05em}{$#1\hspace{-0.2em}$}}\limsup_{#3\in \hyperN{#2}\,}}
{\frontRise{\limsup}{#1\hspace{-0.23em}}\limsup_{#3\in \hyperN{#2}\,}}
{\frontRise{\limsup}{#1\hspace{-0.23em}}\limsup_{#3\in \hyperN{#2}\,}}
{\frontRise{\limsup}{#1\hspace{-0.23em}}\limsup_{#3\in \hyperN{#2}\,}}}
\newcommand{\hyplimsup}[2]{\hyplimsuparg{#1}{#2}{n}}
\newcommand{\hypliminfarg}[3]{\mathchoice{\frontRise{\liminf}{\raisebox{-0.05em}{$#1\hspace{-0.2em}$}}\liminf_{#3\in \hyperN{#2}\,}}
{\frontRise{\liminf}{#1\hspace{-0.23em}}\liminf_{#3\in \hyperN{#2}\,}}
{\frontRise{\liminf}{#1\hspace{-0.23em}}\liminf_{#3\in \hyperN{#2}\,}}
{\frontRise{\liminf}{#1\hspace{-0.23em}}\liminf_{#3\in \hyperN{#2}\,}}}
\newcommand{\hypliminf}[2]{\hypliminfarg{#1}{#2}{n}}


\newcommand{\ptind}{\displaystyle \mathop {\ldots\ldots\,}} 






\newcommand{\cinfty}{{\mathcal C}^\infty}

\newcommand{\sse}{\subseteq}


\makeatother

\providecommand{\corollaryname}{Corollary}
\providecommand{\definitionname}{Definition}
\providecommand{\examplename}{Example}
\providecommand{\lemmaname}{Lemma}
\providecommand{\remarkname}{Remark}
\providecommand{\theoremname}{Theorem}

\begin{document}
\title[Sup, inf and hyperlimits in the non-Archimedean ring of CGN]{Supremum, infimum and hyperlimits in the non-Archimedean ring of
Colombeau generalized numbers}
\author{A.~Mukhammadiev \and D.~Tiwari \and G.~Apaaboah \and P.~Giordano}
\thanks{A. Mukhammadiev has been supported by grant P30407 and P33538 of the
Austrian Science Fund FWF}
\address{\textsc{University of Vienna, Austria}}
\email{akbarali.mukhammadiev@univie.ac.at}
\thanks{D. Tiwari has been supported by grant P30407 and P33538 of the Austrian
Science Fund FWF}
\address{\textsc{University of Vienna, Austria}}
\email{diksha.tiwari@univie.ac.at}
\address{University Grenoble Alpes, France}
\email{apaaboag@etu.univ-grenoble-alpes.fr}
\thanks{P.~Giordano has been supported by grants P30407, P33538 and P34113
of the Austrian Science Fund FWF}
\address{\textsc{Wolfgang Pauli Institute, Vienna, Austria}}
\email{paolo.giordano@univie.ac.at}
\subjclass[2000]{46F-XX, 46F30, 26E30}
\keywords{Colombeau generalized numbers, non-Archimedean rings, generalized
functions.}
\begin{abstract}
It is well-known that the notion of limit in the sharp topology of
sequences of Colombeau generalized numbers $\widetilde{\mathbb{R}}$
does not generalize classical results. E.g.~the sequence $\frac{1}{n}\not\to0$
and a sequence $(x_{n})_{n\in\mathbb{N}}$ converges \emph{if} and
only if $x_{n+1}-x_{n}\to0$. This has several deep consequences,
e.g.~in the study of series, analytic generalized functions, or sigma-additivity
and classical limit theorems in integration of generalized functions.
The lacking of these results is also connected to the fact that $\widetilde{\mathbb{R}}$
is necessarily not a complete ordered set, e.g.~the set of all the
infinitesimals has neither supremum nor infimum. We present a solution
of these problems with the introduction of the notions of hypernatural
number, hypersequence, close supremum and infimum. In this way, we
can generalize all the classical theorems for the hyperlimit of a
hypersequence. The paper explores ideas that can be applied to other
non-Archimedean settings.
\end{abstract}

\maketitle

\section{Introduction}

A key concept of non-Archimedean analysis is that extending the real
field $\R$ into a ring containing infinitesimals and infinite numbers
could eventually lead to the solution of non trivial problems. This
is the case, e.g., of Colombeau theory, where nonlinear generalized
functions can be viewed as set-theoretical maps on domains consisting
of generalized points of the non-Archimedean ring $\Rtil$. This orientation
has become increasingly important in recent years and hence it has
led to the study of preliminary notions of $\Rtil$ (cf., e.g., \cite{OK,AJ,AFJ,ObVe08,AFJ09,Ar-Fe-Ju-Ob12,Ver10,LB-Gio17,GKV};
see below for a self-contained introduction to the ring of Colombeau
generalized numbers $\Rtil$).

In particular, the sharp topology on $\Rtil$ (cf., e.g., \cite{GK13b,S,S0}
and below) is the appropriate notion to deal with continuity of this
class of generalized functions and for a suitable concept of well-posedness.
This topology necessarily has to deal with balls having infinitesimal
radius $r\in\Rtil$, and thus $\frac{1}{n}\not\to0$ if $n\to+\infty$,
$n\in\N$, because we never have $\R_{>0}\ni\frac{1}{n}<r$ if $r$
is infinitesimal. Another unusual property related to the sharp topology
can be derived from the following inequalities (where $m\in\N$, $n\in\N_{\le m}$,
$r\in\Rtil_{>0}$ is an infinitesimal number, and $\left|x_{k+1}-x_{k}\right|\le r^{2}$)
\[
\left|x_{m}-x_{n}\right|\le\left|x_{m}-x_{m-1}\right|+\ldots+\left|x_{n+1}-x_{n}\right|\le(m-n)r^{2}<r,
\]
which imply that $(x_{n})_{n\in\N}\in\Rtil^{\N}$ is a Cauchy sequence
if and only if $\left|x_{n+1}-x_{n}\right|\to0$ (actually, this is
a well-known property of every ultrametric space, cf., e.g., \cite{Kob96,S}).
Naturally, this has several counter-intuitive consequences (arising
from differences with the classical theory) when we have to deal with
the study of series, analytic generalized functions, or sigma-additivity
and classical limit theorems in integration of generalized functions
(cf., e.g., \cite{PiScVa09,Ver08,TI}).

One of the aims of the present article is to solve this kind of counter-intuitive
properties so as to arrive at useful notions for the theory of generalized
functions. In order to settle this problem, it is important to generalize
the role of the net $(\eps)$, as used in Colombeau theory, into a
more general $\rho=(\rho_{\eps})\to0$ (which is called a \emph{gauge}),
and hence to generalize $\Rtil$ into some $\rcrho$ (see Def.~\ref{def:RCGN}).
We then introduce the set of \emph{hypernatural numbers} as
\[
\hypNr:=\left\{ [n_{\eps}]\in\rcrho\mid n_{\eps}\in\N\quad\forall\eps\right\} ,
\]
so that it is natural to expect that $\frac{1}{n}\to0$ in the sharp
topology if $n\to+\infty$ with $n\in\hypNr$, because now $n$ can
also take infinite values. The notion of sequence is therefore substituted
with that of \emph{hypersequence}, as a map $(x_{n})_{n}:\hypNs\ra\rcrho$,
where $\sigma$ is, generally speaking, another gauge. As we will
see, (cf.~example~\ref{exa:hyperlimits}) only in this way we are
able to prove e.g.~that $\frac{1}{\log n}\to0$ in $\rcrho$ as $n\in\hypNs$
but only for a suitable gauge $\sigma$ (depending on $\rho$), whereas
this limit does not exist if $\sigma=\rho$.

Finally, the notions of supremum and infimum are naturally linked
to the notion of limit of a monotonic (hyper)sequence. Being an ordered
set, $\rcrho$ already has a definition of, let us say, supremum as
least upper bound. However, as already preliminary studied and proved
by \cite{GarVer11}, this definition does not fit well with topological
properties of $\rcrho$ because generalized numbers $[x_{\eps}]\in\rcrho$
can actually jump as $\eps\to0^{+}$ (see Sec.~\ref{subsec:Archimedean-upper-bound}).
It is well known that in $\R$ we have $m=\sup(S)$ if and only if
$m$ is an upper bound of $S$ and
\begin{equation}
\forall r\in\R_{>0}\,\exists s\in S:\ m-r\le s.\label{eq:closedSupR}
\end{equation}
This could be generalized into the notion of \emph{close supremum}
in $\rcrho$, generalizing \cite{GarVer11}, that results into better
topological properties, see Sec.~\ref{sec:Supremum-and-Infimum}.
The ideas presented in the present article, which is self-contained,
can surely be useful to explore similar ideas in other non-Archimedean
settings, such as \cite{BeMa19,BeLuB15,Sha13,Palm,Kob96}.

\section{The Ring of Robinson Colombeau and the hypernatural numbers}

In this section, we introduce our non-Archimedean ring of scalars
and its subset of hypernatural numbers. For more details and proofs
about the basic notions introduced here, the reader can refer e.g.~to
\cite{C1,TI,GKOS}.

As we mentioned above, in order to accomplish the theory of hyperlimits,
it is important to generalize Colombeau generalized numbers by taking
an arbitrary asymptotic scale instead of the usual $\rho_{\eps}=\eps$:
\begin{defn}
\label{def:RCGN}Let $\rho=(\rho_{\eps})\in(0,1]^{I}$ be a net such
that $(\rho_{\eps})\to0$ as $\eps\to0^{+}$ (in the following, such
a net will be called a \emph{gauge}), then
\begin{enumerate}
\item $\mathcal{I}(\rho):=\left\{ (\rho_{\eps}^{-a})\mid a\in\R_{>0}\right\} $
is called the \emph{asymptotic gauge} generated by $\rho$.
\item If $\mathcal{P}(\eps)$ is a property of $\eps\in I$, we use the
notation $\forall^{0}\eps:\,\mathcal{P}(\eps)$ to denote $\exists\eps_{0}\in I\,\forall\eps\in(0,\eps_{0}]:\,\mathcal{P}(\eps)$.
We can read $\forall^{0}\eps$ as \emph{for $\eps$ small}.
\item We say that a net $(x_{\eps})\in\R^{I}$ \emph{is $\rho$-moderate},
and we write $(x_{\eps})\in\R_{\rho}$ if 
\[
\exists(J_{\eps})\in\mathcal{I}(\rho):\ x_{\eps}=O(J_{\eps})\text{ as }\eps\to0^{+},
\]
i.e., if 
\[
\exists N\in\N\,\forall^{0}\eps:\ |x_{\eps}|\le\rho_{\eps}^{-N}.
\]
\item Let $(x_{\eps})$, $(y_{\eps})\in\R^{I}$, then we say that $(x_{\eps})\sim_{\rho}(y_{\eps})$
if 
\[
\forall(J_{\eps})\in\mathcal{I}(\rho):\ x_{\eps}=y_{\eps}+O(J_{\eps}^{-1})\text{ as }\eps\to0^{+},
\]
that is if 
\[
\forall n\in\N\,\forall^{0}\eps:\ |x_{\eps}-y_{\eps}|\le\rho_{\eps}^{n}.
\]
This is a congruence relation on the ring $\R_{\rho}$ of moderate
nets with respect to pointwise operations, and we can hence define
\[
\RC{\rho}:=\R_{\rho}/\sim_{\rho},
\]
which we call \emph{Robinson-Colombeau ring of generalized numbers}.
This name is justified by \cite{Rob73,C1}: Indeed, in \cite{Rob73}
A.~Robinson introduced the notion of moderate and negligible nets
depending on an arbitrary fixed infinitesimal $\rho$ (in the framework
of nonstandard analysis); independently, J.F.~Colombeau, cf.~e.g.~\cite{C1}
and references therein, studied the same concepts without using nonstandard
analysis, but considering only the particular infinitesimal $(\eps)$.
\item In particular, if the gauge $\rho=(\rho_{\eps})$ is non-decreasing,
then we say that $\rho$ is a \emph{monotonic gauge}. Clearly, considering
a monotonic gauge narrows the class of moderate nets: e.g.~if $\lim_{\eps\to\frac{1}{k}}x_{\eps}=+\infty$
for all $k\in\N_{>0}$, then $(x_{\eps})\notin\R_{\rho}$ for any
monotonic gauge $\rho$.
\end{enumerate}
\end{defn}

In the following, $\rho$ will always denote a net as in Def.~\ref{def:RCGN},
even if we will sometimes omit the dependence on the infinitesimal
$\rho$, when this is clear from the context. We will also use other
directed sets instead of $I$: e.g.~$J\subseteq I$ such that $0$
is a closure point of $J$, or $I\times\N$. The reader can easily
check that all our constructions can be repeated in these cases.

We also recall that we write $[x_{\eps}]\le[y_{\eps}]$ if there exists
$(z_{\eps})\in\R^{I}$ such that $(z_{\eps})\sim_{\rho}0$ (we then
say that $(z_{\eps})$ is \emph{$\rho$-negligible}) and $x_{\eps}\le y_{\eps}+z_{\eps}$
for $\eps$ small. Equivalently, we have that $x\le y$ if and only
if there exist representatives $[x_{\eps}]=x$ and $[y_{\eps}]=y$
such that $x_{\eps}\le y_{\eps}$ for all $\eps$.

Although the order $\le$ is not total, we still have the possibility
to define the infimum $[x_{\eps}]\wedge[y_{\eps}]:=[\min(x_{\eps},y_{\eps})]$,
the supremum $[x_{\eps}]\vee[y_{\eps}]:=\left[\max(x_{\eps},y_{\eps})\right]$
of a finite number of generalized numbers. Henceforth, we will also
use the customary notation $\RC{\rho}^{*}$ for the set of invertible
generalized numbers, and we write $x<y$ to say that $x\le y$ and
$x-y\in\rcrho^{*}$. Our notations for intervals are: $[a,b]:=\{x\in\RC{\rho}\mid a\le x\le b\}$,
$[a,b]_{\R}:=[a,b]\cap\R$. Finally, we set $\diff{\rho}:=[\rho_{\eps}]\in\RC{\rho}$,
which is a positive invertible infinitesimal, whose reciprocal is
$\diff{\rho}^{-1}=[\rho_{\eps}^{-1}]$, which is necessarily a strictly
positive infinite number.

The following result is useful to deal with positive and invertible
generalized numbers. For its proof, see e.g.~\cite{AJ,AFJ09,TI,GKOS}.
\begin{lem}
\label{lem:mayer} Let $x\in\RC{\rho}$. Then the following are equivalent:
\begin{enumerate}
\item \label{enu:positiveInvertible}$x$ is invertible and $x\ge0$, i.e.~$x>0$.
\item \label{enu:strictlyPositive}For each representative $(x_{\eps})\in\R_{\rho}$
of $x$ we have $\forall^{0}\eps:\ x_{\eps}>0$.
\item \label{enu:greater-i_epsTom}For each representative $(x_{\eps})\in\R_{\rho}$
of $x$ we have $\exists m\in\N\,\forall^{0}\eps:\ x_{\eps}>\rho_{\eps}^{m}$.
\item \label{enu:There-exists-a}There exists a representative $(x_{\eps})\in\R_{\rho}$
of $x$ such that $\exists m\in\N\,\forall^{0}\eps:\ x_{\eps}>\rho_{\eps}^{m}$.
\end{enumerate}
\end{lem}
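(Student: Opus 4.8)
The plan is to prove the four-way equivalence by establishing a cycle of implications, together with the two trivial implications that structure the statement. The natural route is $\eqref{enu:positiveInvertible}\Rightarrow\eqref{enu:strictlyPositive}\Rightarrow\eqref{enu:greater-i_epsTom}\Rightarrow\eqref{enu:There-exists-a}\Rightarrow\eqref{enu:positiveInvertible}$, since the step $\eqref{enu:greater-i_epsTom}\Rightarrow\eqref{enu:There-exists-a}$ is immediate (``for each representative'' trivially implies ``there exists a representative,'' as at least one representative exists). First I would unwind the definition of $x>0$: by the stated meaning of the strict order, $x>0$ means $x\ge 0$ and $x\in\rcrho^{*}$, i.e.\ $x$ is invertible. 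So \eqref{enu:positiveInvertible} and the hypothesis $x>0$ say the same thing, and the content is to connect invertibility-plus-nonnegativity with the pointwise $\eps$-wise estimates.

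For $\eqref{enu:positiveInvertible}\Rightarrow\eqref{enu:strictlyPositive}$, fix an arbitrary representative $(x_\eps)$. Invertibility of $x$ gives a moderate net $(y_\eps)$ with $x_\eps y_\eps \sim_\rho 1$, hence $x_\eps y_\eps \ge \tfrac12$ for $\eps$ small; moderateness of $(y_\eps)$ yields $N$ with $|y_\eps|\le\rho_\eps^{-N}$ for $\eps$ small, so $x_\eps \ge \tfrac12\rho_\eps^{N}>0$ for $\eps$ small. (This simultaneously almost proves \eqref{enu:greater-i_epsTom}.) The subtlety is that $x\ge 0$ gives only $x_\eps \ge -z_\eps$ with $(z_\eps)\sim_\rho 0$ for \emph{some} negligible net, but the lower bound $x_\eps\ge\tfrac12\rho_\eps^{N}$ already dominates any negligible perturbation, so positivity for $\eps$ small follows on this representative; since the representative was arbitrary, \eqref{enu:strictlyPositive} holds.

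For $\eqref{enu:strictlyPositive}\Rightarrow\eqref{enu:greater-i_epsTom}$, I expect this to be the main obstacle, because a net can be pointwise strictly positive yet fail to stay above every power $\rho_\eps^{m}$ --- the issue is uniformity as $\eps\to 0^+$. The key is to reuse invertibility rather than mere positivity: I would argue that \eqref{enu:strictlyPositive} for \emph{every} representative forces $x$ to be invertible, and then extract the power bound from a bound on the inverse. Concretely, suppose \eqref{enu:greater-i_epsTom} fails for some representative $(x_\eps)$; then for every $m$ there are arbitrarily small $\eps$ with $0<x_\eps\le\rho_\eps^{m}$. Using a diagonal/interleaving construction one builds a negligible modification producing a representative that is $\le 0$ on a cofinal set of $\eps$, contradicting \eqref{enu:strictlyPositive} applied to that representative. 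This is precisely the standard argument behind Lemma~\ref{lem:mayer}, and the delicate point is the diagonalization that converts ``not bounded below by any fixed power'' into ``becomes non-positive along a sequence $\eps\to 0^+$.''

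Finally, for $\eqref{enu:There-exists-a}\Rightarrow\eqref{enu:positiveInvertible}$, take the representative $(x_\eps)$ with $x_\eps>\rho_\eps^{m}$ for $\eps$ small. Then $x\ge 0$ is clear. For invertibility, define $y_\eps:=1/x_\eps$ where $x_\eps>0$ and $y_\eps:=0$ otherwise; the bound $x_\eps>\rho_\eps^{m}$ gives $0<y_\eps<\rho_\eps^{-m}$ for $\eps$ small, so $(y_\eps)\in\R_\rho$ is moderate, and $x_\eps y_\eps=1$ for $\eps$ small shows $[y_\eps]$ is the inverse of $x$ in $\rcrho$. Hence $x$ is invertible and nonnegative, i.e.\ $x>0$, closing the cycle. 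Throughout I would only use the definitions of moderateness, of $\sim_\rho$, of the order $\le$, and of $\rcrho^{*}$ recalled above, so no machinery beyond the excerpt is needed.
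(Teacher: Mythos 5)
The paper does not actually prove this lemma --- it defers to \cite{AJ,AFJ09,TI,GKOS} --- so there is no in-text argument to compare against; your cycle (i)$\Rightarrow$(ii)$\Rightarrow$(iii)$\Rightarrow$(iv)$\Rightarrow$(i) is the standard proof and is correct. Two places deserve to be written out in full. In (i)$\Rightarrow$(ii), the inequalities $x_{\eps}y_{\eps}\ge\frac{1}{2}$ and $|y_{\eps}|\le\rho_{\eps}^{-N}$ first give only $|x_{\eps}|\ge\frac{1}{2}\rho_{\eps}^{N}$; it is then the hypothesis $x\ge0$, i.e.\ $x_{\eps}\ge-z_{\eps}$ with $(z_{\eps})$ $\rho$-negligible, that excludes the branch $x_{\eps}\le-\frac{1}{2}\rho_{\eps}^{N}$ for $\eps$ small --- this is what you intend, but the order of the two deductions should be made explicit. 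In (ii)$\Rightarrow$(iii), the diagonal construction should be displayed: if (iii) fails for $(x_{\eps})$, choose $\eps_{k}\downarrow0$ with $x_{\eps_{k}}\le\rho_{\eps_{k}}^{k}$; by (ii) these values are eventually positive, so the net $z_{\eps_{k}}:=x_{\eps_{k}}$ and $z_{\eps}:=0$ otherwise satisfies $|z_{\eps}|\le\rho_{\eps}^{n}$ for $\eps$ small (for each fixed $n$), hence is negligible, and $(x_{\eps}-z_{\eps})$ is a representative of $x$ vanishing on the cofinal set $\{\eps_{k}\}_{k}$, contradicting (ii). Note that the positivity of $x_{\eps_{k}}$ from (ii) is what makes $(z_{\eps})$ negligible, so the sketch is not circular. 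With these details filled in, the proof is complete and self-contained.
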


\subsection{The language of subpoints}

The following simple language allows us to simplify some proofs using
steps recalling the classical real field $\R$. We first introduce
the notion of \emph{subpoint}:
\begin{defn}
For subsets $J$, $K\subseteq I$ we write $K\subzero J$ if $0$
is an accumulation point of $K$ and $K\sse J$ (we read it as: $K$
\emph{is co-final in $J$}). Note that for any $J\subzero I$, the
constructions introduced so far in Def.~\ref{def:RCGN} can be repeated
using nets $(x_{\eps})_{\eps\in J}$. We indicate the resulting ring
with the symbol $\rcrho^{n}|_{J}$. More generally, no peculiar property
of $I=(0,1]$ will ever be used in the following, and hence all the
presented results can be easily generalized considering any other
directed set. If $K\subzero J$, $x\in\rcrho^{n}|_{J}$ and $x'\in\rcrho^{n}|_{K}$,
then $x'$ is called a \emph{subpoint} of $x$, denoted as $x'\subseteq x$,
if there exist representatives $(x_{\eps})_{\eps\in J}$, $(x'_{\eps})_{\eps\in K}$
of $x$, $x'$ such that $x'_{\eps}=x_{\eps}$ for all $\eps\in K$.
In this case we write $x'=x|_{K}$, $\dom{x'}:=K$, and the restriction
$(-)|_{K}:\rcrho^{n}\ra\rcrho^{n}|_{K}$ is a well defined operation.
In general, for $X\sse\rcrho^{n}$ we set $X|_{J}:=\{x|_{J}\in\rcrho^{n}|_{J}\mid x\in X\}$.
\end{defn}

In the next definition, we introduce binary relations that hold only
\emph{on subpoints}. Clearly, this idea is inherited from nonstandard
analysis, where cofinal subsets are always taken in a fixed ultrafilter.
\begin{defn}
Let $x$, $y\in\rcrho$, $L\subzero I$, then we say
\begin{enumerate}
\item $x<_{L}y\ :\iff\ x|_{L}<y|_{L}$ (the latter inequality has to be
meant in the ordered ring $\rcrho|_{L}$). We read $x<_{L}y$ as ``\emph{$x$
is less than $y$ on $L$}''.
\item $x\sbpt{<}y\ :\iff\ \exists L\subzero I:\ x<_{L}y$. We read $x\sbpt{<}y$
as ``\emph{$x$ is less than $y$ on subpoints''.}
\end{enumerate}
Analogously, we can define other relations holding only on subpoints
such as e.g.: $\sbpt{\in}$, $\sbpt{\le}$, $\sbpt{=}$, $\sbpt{\subseteq}$,
etc.
\end{defn}

\noindent For example, we have
\begin{align*}
x\le y\  & \iff\ \forall L\subzero I:\ x\le_{L}y\\
x<y\  & \iff\ \forall L\subzero I:\ x<_{L}y,
\end{align*}
the former following from the definition of $\le$, whereas the latter
following from Lem.~\ref{lem:mayer}. Moreover, if $\mathcal{P}\left\{ x_{\eps}\right\} $
is an arbitrary property of $x_{\eps}$, then
\begin{equation}
\neg\left(\forall^{0}\eps:\ \mathcal{P}\left\{ x_{\eps}\right\} \right)\ \iff\ \exists L\subzero I\,\forall\eps\in L:\ \neg\mathcal{P}\left\{ x_{\eps}\right\} .\label{eq:negation}
\end{equation}

Note explicitly that, generally speaking, relations on subpoints,
such as $\sbpt{\le}$ or $\sbpt{=}$, do not inherit the same properties
of the corresponding relations for points. So, e.g., both $\sbpt{=}$
and $\sbpt{\le}$ are not transitive relations.

The next result clarifies how to equivalently write a negation of
an inequality or of an equality using the language of subpoints.
\begin{lem}
\label{lem:negationsSubpoints}Let $x$, $y\in\rcrho$, then
\begin{enumerate}
\item \label{enu:neg-le}$x\nleq y\quad\Longleftrightarrow\quad x\sbpt{>}y$
\item \label{enu:negStrictlyLess}$x\not<y\quad\Longleftrightarrow\quad x\sbpt{\ge}y$
\item \label{enu:negEqual}$x\ne y\quad\Longleftrightarrow\quad x\sbpt{>}y$
or $x\sbpt{<}y$
\end{enumerate}
\end{lem}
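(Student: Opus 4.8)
The plan is to reduce each of the three equivalences to a condition quantified over $\eps$, to push the negation through the quantifier $\forall^{0}\eps$ by means of \eqref{eq:negation}, and to reinterpret the resulting estimates on a cofinal set $L\subzero I$ through Lemma \ref{lem:mayer}. I fix representatives $(x_{\eps})$, $(y_{\eps})$ of $x$, $y$ and use two pointwise characterizations, both independent of the chosen representatives: first, $x\le y$ if and only if $\forall n\in\N\,\forall^{0}\eps:\ x_{\eps}\le y_{\eps}+\rho_{\eps}^{n}$, which follows directly from the definition of $\le$ through a $\rho$-negligible net; and second, $x<y$ if and only if $\exists m\in\N\,\forall^{0}\eps:\ y_{\eps}>x_{\eps}+\rho_{\eps}^{m}$, which is exactly Lemma \ref{lem:mayer} applied to $y-x>0$. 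Since no special property of $I$ is used, the same characterizations hold verbatim in each restricted ring $\rcrho|_{L}$.

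For the first equivalence I would negate the characterization of $\le$: by \eqref{eq:negation}, $x\nleq y$ means $\exists n\in\N\,\exists L\subzero I\,\forall\eps\in L:\ x_{\eps}>y_{\eps}+\rho_{\eps}^{n}$. Reading this estimate on $L$ through Lemma \ref{lem:mayer} (in $\rcrho|_{L}$, with $m=n$) shows precisely that $x|_{L}-y|_{L}>0$, i.e.\ $x>_{L}y$, hence $x\sbpt{>}y$; conversely, if $x>_{L}y$ then Lemma \ref{lem:mayer} yields some $m$ with $x_{\eps}>y_{\eps}+\rho_{\eps}^{m}$ for $\eps\in L$ small, and since $L$ is cofinal this contradicts $\forall^{0}\eps:\ x_{\eps}\le y_{\eps}+\rho_{\eps}^{m}$, giving $x\nleq y$. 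The third equivalence is then immediate: as $\le$ is a partial order, hence antisymmetric, $x\ne y$ holds iff $x\nleq y$ or $y\nleq x$, and the first equivalence rewrites these as $x\sbpt{>}y$ or $y\sbpt{>}x$, the latter being $x\sbpt{<}y$.

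For the second equivalence I would negate the characterization of $<$: by \eqref{eq:negation}, $x\not<y$ means that for every $m\in\N$ there is $L_{m}\subzero I$ with $y_{\eps}-x_{\eps}\le\rho_{\eps}^{m}$ for all $\eps\in L_{m}$. On the other hand, unwinding $x\sbpt{\ge}y$ through the definition of $\le$ in $\rcrho|_{L}$ shows it is equivalent to the existence of a \emph{single} $L\subzero I$ such that $y_{\eps}-x_{\eps}\le\rho_{\eps}^{n}$ for $\eps\in L$ small, for every $n$. The direction from $x\sbpt{\ge}y$ to $x\not<y$ only restricts this single $L$ and is routine; the opposite direction is the main obstacle, since it must merge the family $(L_{m})_{m}$ into one cofinal set. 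I would do this by a diagonal choice: pick $\eps_{m}\in L_{m}$ strictly decreasing to $0$ and set $L:=\{\eps_{m}\mid m\in\N\}\subzero I$. Because $\rho_{\eps}\le1$ we have $\rho_{\eps_{m}}^{m}\le\rho_{\eps_{m}}^{n}$ whenever $m\ge n$, so the level-$n$ estimate $y_{\eps}-x_{\eps}\le\rho_{\eps}^{n}$ holds on $L\cap(0,\eps_{n}]$, i.e.\ for $\eps\in L$ small; as $n$ is arbitrary, this is exactly $x\sbpt{\ge}y$.
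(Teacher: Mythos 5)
Your proposal is correct. For parts \ref{enu:neg-le} and \ref{enu:negEqual} you follow essentially the paper's own route: negate the $\eps$-wise characterization of $\le$ through \eqref{eq:negation}, read the resulting cofinal estimate via Lem.~\ref{lem:mayer} in the restricted ring, and deduce \ref{enu:negEqual} from \ref{enu:neg-le} by antisymmetry. The genuine difference is in part \ref{enu:negStrictlyLess}, direction $x\not<y\Rightarrow x\sbpt{\ge}y$. The paper splits into two cases: either $x-y$ is not invertible, in which case it invokes the structure theorem \cite[Thm.~1.2.39]{GKOS} to produce a cofinal set on which $x$ and $y$ agree, or $x\not\le y$, which reduces to part \ref{enu:neg-le}. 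You instead negate the invertibility characterization of $<$ from Lem.~\ref{lem:mayer} directly, obtaining for each $m$ a cofinal set $L_{m}$ on which $y_{\eps}-x_{\eps}\le\rho_{\eps}^{m}$, and then merge the family $(L_{m})_{m}$ into a single cofinal $L$ by the diagonal choice $\eps_{m}\in L_{m}$, $\eps_{m}\downarrow0$, using $\rho_{\eps}\le1$ to propagate the level-$m$ estimate down to every level $n\le m$ on $L\cap(0,\eps_{n}]$. This is valid and fully self-contained: it avoids the external citation at the cost of a routine diagonal extraction, and in effect reproves the relevant portion of the cited structure theorem in this special case; the paper's version is shorter but less elementary. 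The converse direction and the remaining verifications are routine, as you indicate.
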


\begin{proof}
\ref{enu:neg-le} $\Leftarrow$: The relation $x\sbpt{>}y$ means
$x|_{L}>y|_{L}$ for some $L\subzero I$. By Lem.~\ref{lem:mayer}
for the ring $\rcrho|_{L}$, we get $\forall^{0}\varepsilon\in L:\ x_{\varepsilon}>y_{\varepsilon}$,
where $x=[x_{\varepsilon}]$, $y=[y_{\varepsilon}]$ are any representatives
of $x$, $y$ resp. The conclusion follows by \eqref{eq:negation}

\ref{enu:neg-le} $\Rightarrow$: Take any representatives $x=[x_{\varepsilon}]$,
$y=[y_{\varepsilon}]$. The property
\[
\forall q\in\R_{>0}\,\forall^{0}\eps:\ x_{\eps}\le y_{\eps}+\rho_{\eps}^{q}
\]
for $q\to+\infty$ implies $x\le y$. We therefore have
\[
\exists q\in\R_{>0}\,\exists L\subzero I\,\forall\eps\in L:\ x_{\eps}>y_{\eps}+\rho_{\eps}^{q},
\]
i.e.~$x>_{L}y$.

\ref{enu:negStrictlyLess} $\Rightarrow$: We have two cases: either
$x-y$ is not invertible or $x\not\le y$. In the former case, the
conclusion follows from \cite[Thm.~1.2.39]{GKOS}. In the latter one,
it follows from \ref{enu:neg-le}.

\ref{enu:negStrictlyLess} $\Leftarrow$: By contradiction, if $x<y$
then $x=_{L}y$ for some $L\subzero I$, which contradicts the invertibility
of $x-y$.

\ref{enu:negEqual} $\Rightarrow$: By contradiction, assume that
$x\sbpt{\not>}y$ and $x\sbpt{\not<}y$. Then \ref{enu:neg-le} would
yield $x\le y$ and $y\le x$, and hence $x=y$. The opposite implication
directly follows by contradiction.
\end{proof}
Using the language of subpoints, we can write different forms of dichotomy
or trichotomy laws for inequality. The first form is the following
\begin{lem}
\label{lem:trich1st}Let $x$, $y\in\rcrho$, then
\begin{enumerate}
\item \label{enu:dichotomy}$x\le y$ or $x\sbpt{>}y$
\item \label{enu:strictDichotomy}$\neg(x\sbpt{>}y$ and $x\le y)$
\item \label{enu:trichotomy}$x=y$ or $x\sbpt{<}y$ or $x\sbpt{>}y$
\item \label{enu:leThen}$x\le y\ \Rightarrow\ x\sbpt{<}y$ or $x=y$
\item \label{enu:leSubpointsIff}$x\sbpt{\le}y\ \Longleftrightarrow\ x\sbpt{<}y$
or $x\sbpt{=}y$.
\end{enumerate}
\end{lem}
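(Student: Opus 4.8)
The plan is to reduce every item to the negation results of Lemma~\ref{lem:negationsSubpoints}, the classical law of excluded middle, and the elementary fact that in the ordered ring $\rcrho|_{L}$ one cannot have both $x|_{L}\le y|_{L}$ and $y|_{L}<x|_{L}$ (their difference would be simultaneously $\le 0$ and invertible positive). For \ref{enu:dichotomy} I would split on whether $x\le y$: if not, then $x\nleq y$, and Lemma~\ref{lem:negationsSubpoints}\ref{enu:neg-le} turns this directly into $x\sbpt{>}y$. Item \ref{enu:trichotomy} follows the same pattern: either $x=y$, or $x\ne y$ and Lemma~\ref{lem:negationsSubpoints}\ref{enu:negEqual} supplies $x\sbpt{<}y$ or $x\sbpt{>}y$.

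For \ref{enu:strictDichotomy} I would argue by contradiction. If both $x\le y$ and $x\sbpt{>}y$ held, the latter would give $x>_{L}y$ for some $L\subzero I$, while the equivalence $x\le y\iff\forall L\subzero I:\ x\le_{L}y$ recorded just before Lemma~\ref{lem:negationsSubpoints} would give $x\le_{L}y$ on this same $L$; inside $\rcrho|_{L}$ this is exactly the impossibility noted above. Item \ref{enu:leThen} then chains the earlier parts: assuming $x\le y$, if $x\ne y$ then Lemma~\ref{lem:negationsSubpoints}\ref{enu:negEqual} forces $x\sbpt{<}y$ or $x\sbpt{>}y$, and \ref{enu:strictDichotomy} excludes the second alternative, leaving $x\sbpt{<}y$; otherwise $x=y$.

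The last item \ref{enu:leSubpointsIff} is where I expect the only real care to be needed, so it is the main obstacle. The implication $\Leftarrow$ is immediate, since a witness $L\subzero I$ for $x<_{L}y$ or for $x=_{L}y$ is automatically a witness for $x\le_{L}y$. For $\Rightarrow$, I would fix $L\subzero I$ with $x\le_{L}y$ and pass to the ring $\rcrho|_{L}$, in which all of Definition~\ref{def:RCGN} and the preceding lemmas remain valid, since no special property of $I$ was ever used. Applying \ref{enu:leThen} inside $\rcrho|_{L}$ to $x|_{L}\le y|_{L}$ yields either $x|_{L}=y|_{L}$, which is $x\sbpt{=}y$, or $x|_{L}\sbpt{<}y|_{L}$, i.e.\ $x|_{K}<y|_{K}$ for some $K\subzero L$. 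The delicate point is that cofinality is transitive, so $K\subzero L$ and $L\subzero I$ give $K\subzero I$, whence $x|_{K}<y|_{K}$ already witnesses $x\sbpt{<}y$ in the original ring; combining the two cases yields $x\sbpt{<}y$ or $x\sbpt{=}y$.
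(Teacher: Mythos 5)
Your proof is correct and follows essentially the same route as the paper: every item is reduced to Lemma~\ref{lem:negationsSubpoints}. The only (harmless) divergence is in item~\ref{enu:leSubpointsIff}, where the paper deduces $y\le x$ globally from $\neg(x\sbpt{<}y)$ via Lemma~\ref{lem:negationsSubpoints}.\ref{enu:neg-le} and then intersects with the witness for $x\sbpt{\le}y$, while you relativize item~\ref{enu:leThen} to $\rcrho|_{L}$ and invoke transitivity of cofinality; both arguments are valid.
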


\begin{proof}
\ref{enu:dichotomy} and \ref{enu:strictDichotomy} follows directly
from Lem.~\ref{lem:negationsSubpoints}. To prove \ref{enu:trichotomy},
we can consider that $x\sbpt{>}y$ or $x\sbpt{\not>}y$. In the second
case, Lem.~\ref{lem:negationsSubpoints} implies $x\le y$. If $y\le x$
then $x=y$; otherwise, once again by Lem.~\ref{lem:negationsSubpoints},
we get $x\sbpt{<}y$. To prove \ref{enu:leThen}, assume that $x\le y$
but $x\sbpt{\not<}y$, then $x\ge y$ by Lem.~\ref{lem:negationsSubpoints}.\ref{enu:neg-le}
and hence $x=y$. The implication $\Leftarrow$ of \ref{enu:leSubpointsIff}
is trivial. On the other hand, if $x\sbpt{\le}y$ and $x\sbpt{\not<}y$,
then $y\le x$ from Lem.~\ref{lem:negationsSubpoints}.\ref{enu:neg-le},
and hence $x\sbpt{=}y$.
\end{proof}
\noindent As usual, we note that these results can also be trivially
repeated for the ring $\rcrho|_{L}$. So, e.g., we have $x\not\le_{L}y$
if and only if $\exists J\subzero L:\ x>_{J}y$, which is the analog
of Lem.~\ref{lem:negationsSubpoints}.\ref{enu:neg-le} for the ring
$\rcrho|_{L}$.

The second form of trichotomy (which for $\rcrho$ can be more correctly
named as \emph{quadrichotomy})\emph{ is} stated as follows:
\begin{lem}
\label{lem:trich2nd}Let $x=[x_{\eps}]$, $y=[y_{\eps}]\in\rcrho$,
then
\begin{enumerate}
\item \label{enu:quadrichotomyLessEq}$x\le y$ or $x\ge y$ or $\exists L\subzero I:\ L^{c}\subzero I,\ x\ge_{L}y\text{ and }x\le_{L^{c}}y$
\item \label{enu:fromL2noL}If for all $L\subzero I$ the following implication
holds
\begin{equation}
x\le_{L}y,\text{ or }x\ge_{L}y\ \Rightarrow\ \forall^{0}\eps\in L:\ \mathcal{P}\left\{ x_{\eps},y_{\eps}\right\} ,\label{eq:trichHp}
\end{equation}
then $\forall^{0}\eps:\ \mathcal{P}\left\{ x_{\eps},y_{\eps}\right\} $.
\item \label{enu:fromL2noLStrict}If for all $L\subzero I$ the following
implication holds
\begin{equation}
x<_{L}y,\text{ or }x>_{L}y\text{ or }x=_{L}y\ \Rightarrow\ \forall^{0}\eps\in L:\ \mathcal{P}\left\{ x_{\eps},y_{\eps}\right\} ,\label{eq:trichHpStrict}
\end{equation}
then $\forall^{0}\eps:\ \mathcal{P}\left\{ x_{\eps},y_{\eps}\right\} $.
\end{enumerate}
\end{lem}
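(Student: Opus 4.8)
The plan is to obtain \ref{enu:fromL2noL} and \ref{enu:fromL2noLStrict} from the decomposition \ref{enu:quadrichotomyLessEq}, which itself only requires comparing fixed representatives pointwise together with the fact that $0$ is an accumulation point of $I$. For \ref{enu:quadrichotomyLessEq} I would fix representatives $x=[x_\eps]$ and $y=[y_\eps]$ and put $L:=\{\eps\in I\mid x_\eps\ge y_\eps\}$, so that $L^c=\{\eps\in I\mid x_\eps<y_\eps\}$ and $I=L\cup L^c$. Since $0$ is an accumulation point of $I$, it is an accumulation point of at least one of $L$, $L^c$ (they cannot both be bounded away from $0$). If $0$ is not an accumulation point of $L^c$, then $\forall^0\eps:\ x_\eps\ge y_\eps$ and hence $x\ge y$; symmetrically, if $0$ is not an accumulation point of $L$, then $x\le y$. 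Otherwise both $L\subzero I$ and $L^c\subzero I$, and the chosen representatives directly witness $x\ge_L y$ and $x\le_{L^c}y$, which is exactly the third alternative.

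For \ref{enu:fromL2noL} I would feed the three alternatives of \ref{enu:quadrichotomyLessEq} into the hypothesis \eqref{eq:trichHp}. If $x\le y$ or $x\ge y$, then applying \eqref{eq:trichHp} with $L=I$ (which is cofinal in itself) immediately yields $\forall^0\eps:\ \mathcal{P}\{x_\eps,y_\eps\}$. In the third case I would apply \eqref{eq:trichHp} once to $L$ (using $x\ge_L y$) and once to $L^c$ (using $x\le_{L^c}y$), obtaining $\forall^0\eps\in L:\ \mathcal{P}\{x_\eps,y_\eps\}$ and $\forall^0\eps\in L^c:\ \mathcal{P}\{x_\eps,y_\eps\}$; taking the smaller of the two thresholds and using $L\cup L^c=I$ recombines them into $\forall^0\eps:\ \mathcal{P}\{x_\eps,y_\eps\}$.

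The statement \ref{enu:fromL2noLStrict} is the main obstacle, since its hypothesis \eqref{eq:trichHpStrict} only delivers information when one of the \emph{strict} relations $<_L$, $>_L$, $=_L$ holds, and — the order on $\rcrho|_L$ being merely partial — a plain sign condition such as $x\le_L y$ need not upgrade to any of these on all of $L$. I would therefore argue by contradiction. Assuming $\neg(\forall^0\eps:\ \mathcal{P}\{x_\eps,y_\eps\})$, the equivalence \eqref{eq:negation} gives $L\subzero I$ with $\neg\mathcal{P}\{x_\eps,y_\eps\}$ for every $\eps\in L$. Applying the trichotomy Lem.~\ref{lem:trich1st}\ref{enu:trichotomy} inside the ring $\rcrho|_L$ to $x|_L$, $y|_L$ produces $J\subzero L$ with $x=_J y$, $x<_J y$, or $x>_J y$, and $J\subzero I$ by transitivity of cofinality. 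Since $J$ realizes one of the three strict relations, \eqref{eq:trichHpStrict} forces $\forall^0\eps\in J:\ \mathcal{P}\{x_\eps,y_\eps\}$, whereas $J\subseteq L$ forces $\neg\mathcal{P}\{x_\eps,y_\eps\}$ throughout $J$; as $0$ is an accumulation point of $J$, evaluating at a sufficiently small $\eps\in J$ yields the contradiction.

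The remaining steps are routine bookkeeping: tracking the finitely many $\forall^0$ thresholds when recombining over $L$ and $L^c$ in \ref{enu:fromL2noL}, and observing that the trichotomy Lem.~\ref{lem:trich1st} may be applied verbatim to the ring $\rcrho|_L$, which is legitimate since, as the text already notes, every construction and result stated for $\rcrho$ transfers to such restricted rings.
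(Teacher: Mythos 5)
Your proposal is correct and follows essentially the same route as the paper: the same set $L=\{\eps\in I\mid x_\eps\ge y_\eps\}$ for part (i), the same recombination over $L$ and $L^{c}$ for part (ii), and the same contradiction-via-\eqref{eq:negation} scheme for part (iii). The only cosmetic difference is that in (iii) you invoke the trichotomy Lem.~\ref{lem:trich1st}.\ref{enu:trichotomy} directly in the ring $\rcrho|_{L}$, whereas the paper reapplies part (i) to $\rcrho|_{L}$ and then refines with Lem.~\ref{lem:trich1st}.\ref{enu:leThen}; both legitimately produce a cofinal $J\subzero L$ on which one of $<_{J}$, $>_{J}$, $=_{J}$ holds, so the two arguments are interchangeable.
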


\begin{proof}
\ref{enu:quadrichotomyLessEq}: if $x\not\le y$, then $x\sbpt{>}y$
from Lem.~\ref{lem:negationsSubpoints}.\ref{enu:neg-le}. Let $[x_{\eps}]=x$
and $[y_{\eps}]=y$ be two representatives, and set $L:=\{\eps\in I\mid x_{\eps}\ge y_{\eps}\}$.
The relation $x\sbpt{>}y$ implies that $L\subzero I$. Clearly, $x\ge_{L}y$
(but note that in general we cannot prove $x>_{L}y$). If $L^{c}\not\subzero I$,
then $(0,\eps_{o}]\subseteq L$ for some $\eps_{0}$, i.e.~$x\ge y$.
On the contrary, if $L^{c}\subzero I$, then $x\le_{L^{c}}y$.

\ref{enu:fromL2noL}: Property \ref{enu:quadrichotomyLessEq} states
that we have three cases. If $x_{\eps}\le y_{\eps}$ for all $\eps\le\eps_{0}$,
then it suffices to set $L:=(0,\eps_{0}]$ in \eqref{eq:trichHp}
to get the claim. Similarly, we can proceed if $x\ge y$. Finally,
if $x\ge_{L}y$ and $x\le_{L^{c}}y$, then we can apply \eqref{eq:trichHp}
both with $L$ and $L^{c}$ to obtain
\begin{align*}
\forall^{0}\eps & \in L:\ \mathcal{P}\left\{ x_{\eps},y_{\eps}\right\} \\
\forall^{0}\eps & \in L^{c}:\ \mathcal{P}\left\{ x_{\eps},y_{\eps}\right\} ,
\end{align*}
from which the claim directly follows.

\ref{enu:fromL2noLStrict}: By contradiction, assume
\begin{equation}
\forall\eps\in L:\ \neg\mathcal{P}\left\{ x_{\eps},y_{\eps}\right\} ,\label{eq:quadrContr}
\end{equation}
for some $L\subzero I$. We apply \ref{enu:quadrichotomyLessEq} to
the ring $\rcrho|_{L}$ to obtain the following three cases:
\begin{equation}
x\le_{L}y\text{ or }x\ge_{L}y\text{ or }\exists J\subzero L:\ J^{c}\subzero L,\ x\ge_{J}y\text{ and }x\le_{J^{c}}y.\label{eq:casesStrictTrich}
\end{equation}
If $x\le_{L}y$, by Lem.~\ref{lem:trich1st}.\ref{enu:leThen} for
the ring $\rcrho|_{L}$, this case splits into two sub-cases: $x=_{L}y$
or $\exists K\subzero L:\ x<_{K}y$. If the former holds, using \eqref{eq:trichHpStrict}
we get $\mathcal{P}\left\{ x_{\eps},y_{\eps}\right\} $ $\forall^{0}\eps\in L$,
which contradicts \eqref{eq:quadrContr}. If $x<_{K}y$, then $K\subzero I$
and we can apply \eqref{eq:quadrContr} with $K$ to get $\mathcal{P}\left\{ x_{\eps},y_{\eps}\right\} $
$\forall^{0}\eps\in K$, which again contradicts \eqref{eq:quadrContr}
because $K\subzero L$. Similarly we can proceed with the other three
cases stated in \eqref{eq:casesStrictTrich}.
\end{proof}
\noindent Property Lem.~\ref{lem:trich2nd}.\ref{enu:fromL2noL}
represents a typical replacement of the usual dichotomy law in $\R$:
for arbitrary $L\subzero I$, we can assume to have two cases: either
$x\le_{L}y$ or $x\ge_{L}y$. If in both cases we are able to prove
$\mathcal{P}\{x_{\eps},y_{\eps}\}$ for $\eps\in L$ small, then we
always get that this property holds for all $\eps$ small. Similarly,
we can use the strict trichotomy law stated in \ref{enu:fromL2noLStrict}.

\subsection{Inferior, superior and standard parts}

Other simple tools that we can use to study generalized numbers of
$\rti$ are the \emph{inferior }and \emph{superior parts }of a number.
Only in this section of the article, we assume that $\rho$ is a monotonic
gauge.
\begin{defn}
\label{def:infSupParts}Let $x=[x_{\eps}]\in\rcrho$ be a generalized
number, then:
\begin{enumerate}
\item If $\exists L\in\R:\ L\le x$, then $x_{\text{i}}:=\left[\inf_{e\in(0,\eps]}x_{e}\right]$
is called the \emph{inferior part of $x$.}
\item If $\exists U\in\R:\ x\le U$, then $x_{\text{s}}:=\left[\sup_{e\in(0,\eps]}x_{e}\right]$
is called the \emph{superior part of $x$.}
\end{enumerate}
Moreover, we set:
\begin{enumerate}[resume]
\item $\st{x_{\text{i}}}:=\liminf_{\eps\to0^{+}}x_{\eps}\in\R\cup\{\pm\infty\}$,
where $[x_{\eps}]=x$ is any representative of $x$, is called the
\emph{inferior standard part} of $x$. Note that if $\exists x_{\text{i}}$,
i.e.~if $x$ is finitely bounded from below, then $\st{(x_{\text{i}})}=\st{x_{\text{i}}}\in\R$
and $\st{x_{\text{i}}}\ge x_{\text{i}}$.
\item $\st{x_{\text{s}}}:=\limsup_{\eps\to0^{+}}x_{\eps}\in\R\cup\{\pm\infty\}$,
where $[x_{\eps}]=x$ is any representative of $x$, is called the
\emph{superior standard part} of $x$. Note that if $\exists x_{\text{s}}$,
i.e.~if $x$ is finitely bounded from above, then $\st{(x_{\text{s}})}=\st{x_{\text{s}}}\in\R$
and $\st{x_{\text{s}}}\le x_{\text{s}}$.
\end{enumerate}
\end{defn}

\noindent Note that, since $\rho=(\rho_{\eps})$ is non-decreasing,
if $[x'_{\eps}]=x$ is another representative, then for all $e\in(0,\eps]$,
we have $x'_{e}\le x_{e}+\rho_{e}^{n}\le x_{e}+\rho_{\eps}^{n}\le\rho_{\eps}^{n}+\sup_{e\in(0,\eps]}x_{e}$
and hence $\sup_{e\in(0,\eps]}x'_{e}\le\rho_{\eps}^{n}+\sup_{e\in(0,\eps]}x_{e}$.
This shows that inferior and superior parts, when they exist, are
well-defined. Moreover, if $(z_{\eps})$ is negligible, then $\limsup_{\eps\to0^{+}}\left(x_{\eps}+z_{\eps}\right)\le\limsup_{\eps\to0^{+}}x_{\eps}+0$,
which shows that $\st{x_{\text{s}}}$ is well-defined (similarly for
$\st{x_{\text{i}}}$ using super-additivitiy of $\liminf$).

Clearly, $x_{\text{i}}\le x\le x_{\text{s}}$ and $\st{x_{\text{i}}}\le\st{x_{\text{s}}}$.
We have that the generalized number $x$ is near-standard if and only
if $\st{x_{\text{i}}}=\st{x_{\text{s}}}=:\st{x}\in\R$; it is infinitesimal
if and only if $\exists\,\st{x}=0$; it is a positive infinite number
if and only if $\st{x_{\text{i}}}=\st{x_{\text{s}}}=:\st{x}=+\infty$
(the same for negative infinite numbers); it is a finite number if
and only if $\st{x_{\text{i}}},$ $\st{x_{\text{s}}}\in\R$. Finally,
there always exist $x'$, $x''\subseteq x$ such that $x'\approx\st{x_{\text{i}}}$
and $x''\approx\st{x_{\text{s}}}$, where $x\approx y$ means that
$x-y$ is infinitesimal (i.e.~$|x-y|\le r$ for all $r\in\R_{>0}$
or, equivalently, $\lim_{\eps\to0^{+}}x_{\eps}-y_{\eps}=0$ for all
$[x_{\eps}]=x$, $[y_{\eps}]=y$). Therefore, any generalized number
in $\rcrho$ is either finite or some of its subpoints is infinite;
in the former case, some of its subpoints is near standard.

\subsection{\label{subsec:Topologies}Topologies on $\RC{\rho}^{n}$}

On the $\RC{\rho}$-module $\RC{\rho}^{n}$ we can consider the natural
extension of the Euclidean norm, i.e.~$|[x_{\eps}]|:=[|x_{\eps}|]\in\RC{\rho}$,
where $[x_{\eps}]\in\RC{\rho}^{n}$. Even if this generalized norm
takes values in $\RC{\rho}$, it shares some essential properties
with classical norms: 
\begin{align*}
 & |x|=x\vee(-x)\\
 & |x|\ge0\\
 & |x|=0\Rightarrow x=0\\
 & |y\cdot x|=|y|\cdot|x|\\
 & |x+y|\le|x|+|y|\\
 & ||x|-|y||\le|x-y|.
\end{align*}

\noindent It is therefore natural to consider on $\RC{\rho}^{n}$
topologies generated by balls defined by this generalized norm and
a set of radii:
\begin{defn}
\label{def:setOfRadii}We say that $\mathfrak{R}$ is a \emph{set
of radii} if
\begin{enumerate}
\item $\mathfrak{R}\subseteq\RC{\rho}_{\ge0}^{*}$ is a non-empty subset
of positive invertible generalized numbers.
\item For all $r$, $s\in\mathfrak{R}$ the infimum $r\wedge s\in\mathfrak{R}$.
\item $k\cdot r\in\mathfrak{R}$ for all $r\in\mathfrak{R}$ and all $k\in\R_{>0}$.
\end{enumerate}
\noindent Moreover, if $\mathfrak{R}$ is a set of radii and $x$,
$y\in\RC{\rho}$, then:
\begin{enumerate}
\item We write $x<_{\mathfrak{R}}y$ if $\exists r\in\mathfrak{R}:\ r\le y-x$.
\item $B_{r}^{\mathfrak{R}}(x):=\left\{ y\in\RC{\rho}^{n}\mid\left|y-x\right|<_{\mathfrak{R}}r\right\} $
for any $r\in\mathfrak{R}$.
\item $\Eball_{\rho}(x):=\{y\in\R^{n}\mid|y-x|<\rho\}$, for any $\rho\in\R_{>0}$,
denotes an ordinary Euclidean ball in $\R^{n}$.
\end{enumerate}
\end{defn}

\noindent For example, $\RC{\rho}_{\ge0}^{*}$ and $\R_{>0}$ are
sets of radii.
\begin{lem}
\label{lem:<R}Let $\mathfrak{R}$ be a set of radii and $x$, $y$,
$z\in\RC{\rho}$, then
\begin{enumerate}
\item \label{enu:NoReflex}$\neg\left(x<_{\mathfrak{R}}x\right)$.
\item \label{enu:transitive}$x<_{\mathfrak{R}}y$ and $y<_{\mathfrak{R}}z$
imply $x<_{\mathfrak{R}}z$.
\item \label{enu:radiiArePositive}$\forall r\in\mathfrak{R}:\ 0<_{\mathfrak{R}}r$.
\end{enumerate}
\end{lem}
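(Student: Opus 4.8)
The plan is to verify the three items directly from the definition of $<_{\mathfrak{R}}$, using the three defining axioms of a set of radii together with Lemma~\ref{lem:mayer} to control strict positivity. None of the items requires a delicate argument; the only substantive point is the choice of witness in the transitivity step.

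For \ref{enu:NoReflex} I would argue by contradiction. Suppose $x<_{\mathfrak{R}}x$; then there is $r\in\mathfrak{R}$ with $r\le x-x=0$. By axiom~(1) of Definition~\ref{def:setOfRadii} we have $r\in\RC{\rho}_{\ge0}^{*}$, that is $r>0$ in the sense of Lemma~\ref{lem:mayer}: $r$ is invertible and $r\ge0$. Combining $r\ge0$ with $r\le0$ forces $r=0$, which contradicts the invertibility of $r$. Hence no such $r$ exists and $\neg(x<_{\mathfrak{R}}x)$.

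For \ref{enu:transitive}, from $x<_{\mathfrak{R}}y$ and $y<_{\mathfrak{R}}z$ I pick witnesses $r,s\in\mathfrak{R}$ with $r\le y-x$ and $s\le z-y$, and add them to get $r+s\le z-x$. The key observation is that the single generalized number $r\wedge s$ already bounds $z-x$ from below: since $r,s>0$, for $\eps$ small both representatives satisfy $r_{\eps},s_{\eps}\ge0$, so pointwise $\min(r_{\eps},s_{\eps})\le r_{\eps}+s_{\eps}$, giving $r\wedge s\le r+s\le z-x$. By axiom~(2) of Definition~\ref{def:setOfRadii} we have $r\wedge s\in\mathfrak{R}$, so this element witnesses $x<_{\mathfrak{R}}z$. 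This is the place where closure under finite infima is genuinely used, and it is exactly the reason that axiom is built into the notion of a set of radii; I regard it as the only nonautomatic step.

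Finally \ref{enu:radiiArePositive} is immediate: for any $r\in\mathfrak{R}$ the element $r$ itself satisfies $r\le r-0=r$, so taking $r$ as its own witness yields $0<_{\mathfrak{R}}r$. I expect no obstacle in this item.
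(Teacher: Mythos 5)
Your proof is correct; the paper in fact states Lemma~\ref{lem:<R} without proof, treating all three items as routine consequences of Definition~\ref{def:setOfRadii}, and your arguments for \ref{enu:NoReflex} and \ref{enu:radiiArePositive} are exactly the expected ones. One small remark on \ref{enu:transitive}: the closure of $\mathfrak{R}$ under $\wedge$ is not actually needed there. Since $s\in\mathfrak{R}\subseteq\RC{\rho}_{\ge0}^{*}$ gives $s\ge0$, you already have $r\le r+s\le z-x$, so $r$ itself witnesses $x<_{\mathfrak{R}}z$; your detour through $r\wedge s$ is valid but the claim that this is ``exactly the reason that axiom is built into the notion of a set of radii'' is off the mark --- the infimum axiom is really there to make the balls $B_{r}^{\mathfrak{R}}(x)$ a base for a topology (Theorem~\ref{thm:intersectionBalls}), where one must fit a single ball inside the intersection of two.
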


\noindent The relation $<_{\mathfrak{R}}$ has better topological
properties as compared to the usual strict order relation $x\le y$
and $x\ne y$ (a relation that we will therefore never use) because
of the following result:
\begin{thm}
\label{thm:intersectionBalls}The set of balls $\left\{ B_{r}^{\mathfrak{R}}(x)\mid r\in\mathfrak{R},\ x\in\RC{\rho}^{n}\right\} $
generated by a set of radii $\mathfrak{R}$ is a base for a topology
on $\RC{\rho}^{n}$.
\end{thm}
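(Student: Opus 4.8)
The plan is to verify the two standard axioms for a collection of sets to form a base for a topology: first, that the balls cover $\RC{\rho}^{n}$, and second, that the intersection of any two balls contains, around each of its points, a third ball from the collection. The covering is immediate: for any $x\in\RC{\rho}^{n}$ and any fixed $r\in\mathfrak{R}$ (recall $\mathfrak{R}\neq\emptyset$ by Def.~\ref{def:setOfRadii}), Lem.~\ref{lem:<R}.\ref{enu:radiiArePositive} gives $0<_{\mathfrak{R}}r$, hence $|x-x|=0<_{\mathfrak{R}}r$, so $x\in B^{\mathfrak{R}}_{r}(x)$.

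For the second axiom, I would take $z\in B^{\mathfrak{R}}_{r}(x)\cap B^{\mathfrak{R}}_{s}(y)$ and seek a radius $t\in\mathfrak{R}$ with $B^{\mathfrak{R}}_{t}(z)\subseteq B^{\mathfrak{R}}_{r}(x)\cap B^{\mathfrak{R}}_{s}(y)$. The natural candidate mirrors the classical metric-space argument: since $|z-x|<_{\mathfrak{R}}r$ there is $r'\in\mathfrak{R}$ with $r'\le r-|z-x|$, and similarly $s'\in\mathfrak{R}$ with $s'\le s-|z-y|$; then set $t:=r'\wedge s'$, which lies in $\mathfrak{R}$ by closure under infima (Def.~\ref{def:setOfRadii}(ii)). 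For any $w\in B^{\mathfrak{R}}_{t}(z)$ we have $|w-z|<_{\mathfrak{R}}t\le r'$, and combining with $r'\le r-|z-x|$ via the transitivity of $<_{\mathfrak{R}}$ (Lem.~\ref{lem:<R}.\ref{enu:transitive}) together with the triangle inequality $|w-x|\le|w-z|+|z-x|$ should yield $|w-x|<_{\mathfrak{R}}r$, i.e.~$w\in B^{\mathfrak{R}}_{r}(x)$; the symmetric argument using $s'$ gives $w\in B^{\mathfrak{R}}_{s}(y)$.

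The main obstacle is that the relation $<_{\mathfrak{R}}$ is \emph{not} the ordinary order, so the elementary manipulations ``$a<_{\mathfrak{R}}b$ and $b\le c$ imply $a<_{\mathfrak{R}}c$'' and the interaction of $<_{\mathfrak{R}}$ with the ordinary triangle inequality $\le$ must be justified from the definition rather than taken for granted. Concretely, I expect to need an auxiliary compatibility lemma stating that if $a<_{\mathfrak{R}}b$ and $b\le c$ (ordinary order) then $a<_{\mathfrak{R}}c$, and that $a\le b$ together with $b<_{\mathfrak{R}}c$ gives $a<_{\mathfrak{R}}c$: both follow by unwinding $a<_{\mathfrak{R}}b$ to $\exists u\in\mathfrak{R}:\ u\le b-a$ and then using $b-a\le c-a$. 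The delicate point is to chain the estimate $|w-x|\le|w-z|+|z-x|$ with $|w-z|<_{\mathfrak{R}}r'$ and $r'\le r-|z-x|$; rewriting the target as $r-|w-x|\ge r-|w-z|-|z-x|\ge r'-|w-z|$ and producing the required witness radius in $\mathfrak{R}$ dominated by $r-|w-x|$ is where care is needed, and this is precisely why Def.~\ref{def:setOfRadii}(ii)--(iii) (closure under $\wedge$ and under positive real scaling) are built into the axioms of a set of radii.

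Once these compatibility facts are in place, the verification is routine and the two base axioms follow, establishing that $\left\{B^{\mathfrak{R}}_{r}(x)\mid r\in\mathfrak{R},\ x\in\RC{\rho}^{n}\right\}$ generates a topology on $\RC{\rho}^{n}$.
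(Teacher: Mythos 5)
Your argument is correct, and since the paper states Thm.~\ref{thm:intersectionBalls} without proof, yours is exactly the proof one would supply: covering via Lem.~\ref{lem:<R} and the base-intersection axiom via the witness radius $t:=r'\wedge s'$. The point you single out as delicate in fact resolves immediately once the definitions are unwound: if $u\in\mathfrak{R}$ witnesses $|w-z|<_{\mathfrak{R}}t$, i.e.\ $u\le t-|w-z|$, then $u\le r'-|w-z|\le r-|z-x|-|w-z|\le r-|w-x|$ by the triangle inequality, so the \emph{same} $u$ witnesses $|w-x|<_{\mathfrak{R}}r$ and no new radius needs to be produced; in particular only condition (ii) of Def.~\ref{def:setOfRadii} (closure under $\wedge$) is used here, while the scaling condition (iii) plays no role in this theorem.
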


\noindent Henceforth, we will only consider the sets of radii $\RC{\rho}_{\ge0}^{*}=\rcrho_{>0}$
and $\R_{>0}$ and will use the simplified notation $B_{r}(x):=B_{r}^{\frak{\mathfrak{R}}}(x)$
if $\mathfrak{R}=\rcrho_{>0}$. The topology generated in the former
case is called \emph{sharp topology}, whereas the latter is called
\emph{Fermat topology}. We will call \emph{sharply open set} any open
set in the sharp topology, and \emph{large open set} any open set
in the Fermat topology; clearly, the latter is coarser than the former.
It is well-known (see e.g.~\cite{AFJ09,AJ,GK13b,GK15,TI} and references
therein) that this is an equivalent way to define the sharp topology
usually considered in the ring of Colombeau generalized numbers. We
therefore recall that the sharp topology on $\rcrho^{n}$ is Hausdorff
and Cauchy complete, see e.g.~\cite{AFJ09,GK15}.

\subsection{Open, closed and bounded sets generated by nets}

A natural way to obtain sharply open, closed and bounded sets in $\RC{\rho}^{n}$
is by using a net $(A_{\eps})$ of subsets $A_{\eps}\subseteq\R^{n}$.
We have two ways of extending the membership relation $x_{\eps}\in A_{\eps}$
to generalized points $[x_{\eps}]\in\RC{\rho}^{n}$ (cf.\ \cite{ObVe08,GKV}):
\begin{defn}
\label{def:internalStronglyInternal}Let $(A_{\eps})$ be a net of
subsets of $\R^{n}$, then
\begin{enumerate}
\item $[A_{\eps}]:=\left\{ [x_{\eps}]\in\RC{\rho}^{n}\mid\forall^{0}\eps:\,x_{\eps}\in A_{\eps}\right\} $
is called the \emph{internal set} generated by the net $(A_{\eps})$.
\item Let $(x_{\eps})$ be a net of points of $\R^{n}$, then we say that
$x_{\eps}\in_{\eps}A_{\eps}$, and we read it as $(x_{\eps})$ \emph{strongly
belongs to $(A_{\eps})$}, if
\begin{enumerate}
\item $\forall^{0}\eps:\ x_{\eps}\in A_{\eps}$.
\item If $(x'_{\eps})\sim_{\rho}(x_{\eps})$, then also $x'_{\eps}\in A_{\eps}$
for $\eps$ small.
\end{enumerate}
\noindent Moreover, we set $\sint{A_{\eps}}:=\left\{ [x_{\eps}]\in\RC{\rho}^{n}\mid x_{\eps}\in_{\eps}A_{\eps}\right\} $,
and we call it the \emph{strongly internal set} generated by the net
$(A_{\eps})$.
\item We say that the internal set $K=[A_{\eps}]$ is \emph{sharply bounded}
if there exists $M\in\RC{\rho}_{>0}$ such that $K\subseteq B_{M}(0)$.
\item Finally, we say that the net $(A_{\eps})$ is \emph{sharply bounded
}if there exists $N\in\R_{>0}$ such that $\forall^{0}\eps\,\forall x\in A_{\eps}:\ |x|\le\rho_{\eps}^{-N}$.
\end{enumerate}
\end{defn}

\noindent Therefore, $x\in[A_{\eps}]$ if there exists a representative
$[x_{\eps}]=x$ such that $x_{\eps}\in A_{\eps}$ for $\eps$ small,
whereas this membership is independent from the chosen representative
in case of strongly internal sets. An internal set generated by a
constant net $A_{\eps}=A\subseteq\R^{n}$ will simply be denoted by
$[A]$.

The following theorem (cf.~\cite{ObVe08,GKV} for the case $\rho_{\eps}=\eps$,
and \cite{TI} for an arbitrary gauge) shows that internal and strongly
internal sets have dual topological properties:
\begin{thm}
\noindent \label{thm:strongMembershipAndDistanceComplement}For $\eps\in I$,
let $A_{\eps}\subseteq\R^{n}$ and let $x_{\eps}\in\R^{n}$. Then
we have
\begin{enumerate}
\item \label{enu:internalSetsDistance}$[x_{\eps}]\in[A_{\eps}]$ if and
only if $\forall q\in\R_{>0}\,\forall^{0}\eps:\ d(x_{\eps},A_{\eps})\le\rho_{\eps}^{q}$.
Therefore $[x_{\eps}]\in[A_{\eps}]$ if and only if $[d(x_{\eps},A_{\eps})]=0\in\RC{\rho}$.
\item \label{enu:stronglyIntSetsDistance}$[x_{\eps}]\in\sint{A_{\eps}}$
if and only if $\exists q\in\R_{>0}\,\forall^{0}\eps:\ d(x_{\eps},A_{\eps}^{\text{c}})>\rho_{\eps}^{q}$,
where $A_{\eps}^{\text{c}}:=\R^{n}\setminus A_{\eps}$. Therefore,
if $(d(x_{\eps},A_{\eps}^{\text{c}}))\in\R_{\rho}$, then $[x_{\eps}]\in\sint{A_{\eps}}$
if and only if $[d(x_{\eps},A_{\eps}^{\text{c}})]>0$.
\item \label{enu:internalAreClosed}$[A_{\eps}]$ is sharply closed.
\item \label{enu:stronglyIntAreOpen}$\sint{A_{\eps}}$ is sharply open.
\item \label{enu:internalGeneratedByClosed}$[A_{\eps}]=\left[\text{\emph{cl}}\left(A_{\eps}\right)\right]$,
where $\text{\emph{cl}}\left(S\right)$ is the closure of $S\subseteq\R^{n}$.
\item \label{enu:stronglyIntGeneratedByOpen}$\sint{A_{\eps}}=\sint{\text{\emph{int}\ensuremath{\left(A_{\eps}\right)}}}$,
where $\emph{int}\left(S\right)$ is the interior of $S\subseteq\R^{n}$.
\end{enumerate}
\end{thm}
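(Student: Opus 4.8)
The plan is to establish the two distance characterizations \ref{enu:internalSetsDistance} and \ref{enu:stronglyIntSetsDistance} first, since the remaining four items reduce to them together with elementary metric facts. For \ref{enu:internalSetsDistance}, the forward implication is routine: if $[x_\eps]\in[A_\eps]$, fix a representative $(x'_\eps)\sim_\rho(x_\eps)$ with $x'_\eps\in A_\eps$ for $\eps$ small; then $d(x_\eps,A_\eps)\le|x_\eps-x'_\eps|$, and $(x'_\eps)\sim_\rho(x_\eps)$ gives $|x_\eps-x'_\eps|\le\rho_\eps^{q}$ for $\eps$ small, for each $q\in\R_{>0}$. The converse is the first genuinely technical point: from $\forall q\,\forall^{0}\eps:\ d(x_\eps,A_\eps)\le\rho_\eps^{q}$ I must manufacture a representative of $[x_\eps]$ lying in $A_\eps$ for $\eps$ small. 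I would fix a strictly decreasing sequence $(\eps_k)\to0$ with $d(x_\eps,A_\eps)\le\rho_\eps^{k}$ for all $\eps\le\eps_k$, and for $\eps\in(\eps_{k+1},\eps_k]$ choose $x'_\eps\in A_\eps$ with $|x_\eps-x'_\eps|\le d(x_\eps,A_\eps)+\rho_\eps^{k}\le2\rho_\eps^{k}\le\rho_\eps^{k-1}$ (the last step once $\rho_\eps\le1/2$). Since $k\to\infty$ as $\eps\to0$, this diagonal choice yields $(x'_\eps)\sim_\rho(x_\eps)$ with $x'_\eps\in A_\eps$, i.e.~$[x_\eps]\in[A_\eps]$. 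The ``Therefore'' clause is then immediate: the displayed condition says exactly that the nonnegative net $(d(x_\eps,A_\eps))$ is $\rho$-negligible (and negligible nets are automatically moderate), i.e.~$[d(x_\eps,A_\eps)]=0$.

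For \ref{enu:stronglyIntSetsDistance}, the backward implication is the easy direction. Assuming $\exists q\,\forall^{0}\eps:\ d(x_\eps,A_\eps^{\text{c}})>\rho_\eps^{q}$, positivity of the distance forces $x_\eps\in A_\eps$, giving condition (a) of strong membership; and for any $(x'_\eps)\sim_\rho(x_\eps)$ one has $|x_\eps-x'_\eps|\le\rho_\eps^{q+1}<d(x_\eps,A_\eps^{\text{c}})$ for $\eps$ small, so $x'_\eps$ cannot lie in $A_\eps^{\text{c}}$, giving (b). The forward implication is where the main obstacle lies, and I would argue by contraposition using \eqref{eq:negation}: if the distance condition fails, then $\forall q\,\exists L_q\subzero I\,\forall\eps\in L_q:\ d(x_\eps,A_\eps^{\text{c}})\le\rho_\eps^{q}$. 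Choosing $q_k\to\infty$ and $\eps_k\in L_{q_k}$ with $\eps_k\downarrow0$, and using that $A_\eps^{\text{c}}$ is dense in $\mathrm{cl}(A_\eps^{\text{c}})$ (so that the distance to $A_\eps^{\text{c}}$ is realized up to arbitrarily small slack by an actual point of $A_\eps^{\text{c}}$), I pick $y_{\eps_k}\in A_{\eps_k}^{\text{c}}$ with $|x_{\eps_k}-y_{\eps_k}|\le\rho_{\eps_k}^{q_k-1}$ and set $x'_\eps:=y_\eps$ on $L:=\{\eps_k\}$, $x'_\eps:=x_\eps$ elsewhere. Then $(x'_\eps)\sim_\rho(x_\eps)$ while $x'_\eps\notin A_\eps$ on the cofinal set $L$, contradicting property (b) of $x_\eps\in_\eps A_\eps$. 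The two delicate features here are precisely that $A_\eps^{\text{c}}$ need not be closed (handled by the density remark) and that the failure of the condition is only available on subpoints (handled by the diagonal over the $L_q$). The final ``Therefore'' follows from Lemma~\ref{lem:mayer}: when $(d(x_\eps,A_\eps^{\text{c}}))$ is moderate, $[d(x_\eps,A_\eps^{\text{c}})]>0$ is equivalent to $\exists m\,\forall^{0}\eps:\ d(x_\eps,A_\eps^{\text{c}})>\rho_\eps^{m}$, which is the characterization just proved.

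The equalities are then short. For \ref{enu:internalGeneratedByClosed} I use $d(x_\eps,A_\eps)=d(x_\eps,\mathrm{cl}(A_\eps))$ together with \ref{enu:internalSetsDistance}; for \ref{enu:stronglyIntGeneratedByOpen} I use $(\mathrm{int}\,A_\eps)^{\text{c}}=\mathrm{cl}(A_\eps^{\text{c}})$, hence $d(x_\eps,A_\eps^{\text{c}})=d(x_\eps,(\mathrm{int}\,A_\eps)^{\text{c}})$, together with \ref{enu:stronglyIntSetsDistance}, so both sets are characterized by the identical distance net.

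Finally the topological items follow from the distance characterizations. For \ref{enu:internalAreClosed} I take $x$ in the sharp closure of $[A_\eps]$; for each $k$ there is $z^{(k)}\in[A_\eps]$ with $|x-z^{(k)}|<\diff\rho^{k}$, whence for $\eps$ small $|x_\eps-z^{(k)}_\eps|\le\rho_\eps^{k}$ and, by \ref{enu:internalSetsDistance}, $d(z^{(k)}_\eps,A_\eps)\le\rho_\eps^{k}$; the triangle inequality gives $d(x_\eps,A_\eps)\le2\rho_\eps^{k}$ for $\eps$ small, and since $k$ is arbitrary, \ref{enu:internalSetsDistance} yields $x\in[A_\eps]$. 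For \ref{enu:stronglyIntAreOpen}, given $x\in\sint{A_\eps}$ with $d(x_\eps,A_\eps^{\text{c}})>\rho_\eps^{q}$ for $\eps$ small, I claim $B_{\diff\rho^{q+1}}(x)\subseteq\sint{A_\eps}$: any $y$ in this ball satisfies $|x_\eps-y_\eps|\le\rho_\eps^{q+1}$ for $\eps$ small, so $d(y_\eps,A_\eps^{\text{c}})\ge d(x_\eps,A_\eps^{\text{c}})-|x_\eps-y_\eps|>\rho_\eps^{q}-\rho_\eps^{q+1}>\rho_\eps^{q+1}$ for $\eps$ small, and \ref{enu:stronglyIntSetsDistance} gives $y\in\sint{A_\eps}$.
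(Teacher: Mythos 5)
The paper does not actually prove Theorem~\ref{thm:strongMembershipAndDistanceComplement}; it states it with a pointer to \cite{ObVe08,GKV} (for $\rho_\eps=\eps$) and \cite{TI} (for a general gauge). Your argument is correct and is essentially the standard proof from those sources: you establish the two distance characterizations first --- with the right diagonal construction over a decreasing sequence $(\eps_k)$ for the nontrivial direction of \ref{enu:internalSetsDistance}, and the correct contrapositive via \eqref{eq:negation} and a cofinal modification of the representative for the nontrivial direction of \ref{enu:stronglyIntSetsDistance} --- and then reduce \ref{enu:internalAreClosed}--\ref{enu:stronglyIntGeneratedByOpen} to them using $d(x,S)=d(x,\mathrm{cl}(S))$ and $(\mathrm{int}\,A)^{\mathrm{c}}=\mathrm{cl}(A^{\mathrm{c}})$. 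I see no gaps.
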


\noindent For example, it is not hard to show that the closure in
the sharp topology of a ball of center $c=[c_{\eps}]$ and radius
$r=[r_{\eps}]>0$ is 
\begin{equation}
\overline{B_{r}(c)}=\left\{ x\in\rti^{d}\mid\left|x-c\right|\le r\right\} =\left[\overline{\Eball_{r_{\eps}}(c_{\eps})}\right],\label{eq:closureBall}
\end{equation}
whereas
\[
B_{r}(c)=\left\{ x\in\rti^{d}\mid\left|x-c\right|<r\right\} =\sint{\Eball_{r_{\eps}}(c_{\eps})}.
\]

Using internal sets and adopting ideas similar to those used in proving
Lem.~\ref{lem:trich2nd}, we also have the following form of dichotomy
law:
\begin{lem}
\label{lem:dichotIntSets}For $\eps\in I$, let $A_{\eps}\subseteq\R^{n}$
and let $x=[x_{\eps}]\in\rcrho^{n}$. Then we have:
\begin{enumerate}
\item \label{enu:trichIntSets}$x\in[A_{\eps}]$ or $x\in[A_{\eps}^{c}]$
or $\exists L\subzero I:\ L^{c}\subzero I,\ x\in_{L}[A_{\eps}],\ x\in_{L^{c}}[A_{\eps}^{c}]$
\item \label{enu:dichIntSets}If for all $L\subzero I$ the following implication
holds
\[
x\in_{L}[A_{\eps}]\text{ or }x\in_{L}[A_{\eps}^{c}]\ \Rightarrow\ \forall^{0}\eps\in L:\ \mathcal{P}\{x_{\eps}\},
\]
then $\forall^{0}\eps:\ \mathcal{P}\{x_{\eps}\}$.
\end{enumerate}
\end{lem}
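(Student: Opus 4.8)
The plan is to mirror the proof of Lem.~\ref{lem:trich2nd}, replacing the comparison between $x_\eps$ and $y_\eps$ by the complementary alternative $x_\eps\in A_\eps$ versus $x_\eps\in A_\eps^c$. Since $A_\eps$ and $A_\eps^c$ partition $\R^n$, this dichotomy is actually cleaner than the order trichotomy of Lem.~\ref{lem:trich2nd}: there is no analogue of the troublesome phenomenon that on $\{x_\eps\ge y_\eps\}$ one can only assert $x\ge_L y$ and not $x>_L y$.

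For \ref{enu:trichIntSets} I would fix a representative $(x_\eps)$ of $x$ and set $L:=\{\eps\in I\mid x_\eps\in A_\eps\}$, so that $L^c=\{\eps\in I\mid x_\eps\in A_\eps^c\}$ and $I=L\cup L^c$ is a disjoint union. Then I distinguish three exhaustive and mutually incompatible cases according to whether $L$, $L^c$, or neither contains a set of the form $(0,\eps_0]$. If $L\supseteq(0,\eps_0]$, then $x_\eps\in A_\eps$ for $\eps$ small, so $x\in[A_\eps]$ straight from the definition of internal set; symmetrically, if $L^c\supseteq(0,\eps_0]$ then $x\in[A_\eps^c]$. (At most one of these can occur, since a subset $(0,\eps_0]$ of $L$ and one of $L^c$ would intersect.) In the remaining case neither $L$ nor $L^c$ contains such a tail, which says precisely that $0$ is an accumulation point of each, i.e.~$L\subzero I$ and $L^c\subzero I$; since $x_\eps\in A_\eps$ for every $\eps\in L$ and $x_\eps\in A_\eps^c$ for every $\eps\in L^c$, the corresponding restrictions satisfy $x\in_{L}[A_\eps]$ and $x\in_{L^c}[A_\eps^c]$, which is exactly the third disjunct.

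For \ref{enu:dichIntSets} I would keep the representative $(x_\eps)$ appearing in $\mathcal{P}$, form the same partition $I=L\cup L^c$, and run the three-case analysis above on it. If $L\supseteq(0,\eps_0]$, put $L':=(0,\eps_0]$; then $L'\subzero I$ and $x\in_{L'}[A_\eps]$, so the hypothesis gives $\forall^{0}\eps\in L':\ \mathcal{P}\{x_\eps\}$, and since $L'$ is itself of the form $(0,\eps_0]$ this is simply $\forall^{0}\eps:\ \mathcal{P}\{x_\eps\}$. The case of a tail inside $L^c$ is identical, with $[A_\eps^c]$ in place of $[A_\eps]$. In the third case we have both $x\in_{L}[A_\eps]$ and $x\in_{L^c}[A_\eps^c]$, so applying the hypothesis once to $L$ and once to $L^c$ yields $\forall^{0}\eps\in L:\ \mathcal{P}\{x_\eps\}$ together with $\forall^{0}\eps\in L^c:\ \mathcal{P}\{x_\eps\}$; since $L\cup L^c=I$, choosing the smaller of the two thresholds covers a whole tail $(0,\eps_0]$ and we conclude $\forall^{0}\eps:\ \mathcal{P}\{x_\eps\}$.

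I do not expect a serious obstacle, as the argument is elementary once the partition is in place. The one point requiring care is the bookkeeping of representatives in \ref{enu:dichIntSets}: membership $x\in[A_\eps]$ is only existential over representatives, whereas the target $\mathcal{P}\{x_\eps\}$ is attached to a fixed one, so one must carry the single representative used to define $L$ through the entire argument and never switch. Working directly from the definition in this way sidesteps the issue; alternatively one could phrase membership via the representative-free distance characterization of Thm.~\ref{thm:strongMembershipAndDistanceComplement}.\ref{enu:internalSetsDistance}, but this is not needed here.
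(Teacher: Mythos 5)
Your proof is correct and follows essentially the same route as the paper: both define $L:=\{\eps\in I\mid x_{\eps}\in A_{\eps}\}$ for a fixed representative, split according to whether $L$ or $L^{c}$ contains a tail $(0,\eps_{0}]$ or both are co-final in $I$, and then derive part \ref{enu:dichIntSets} by applying the hypothesis to the tail (resp.\ to $L$ and $L^{c}$ separately) exactly as in the proof of Lem.~\ref{lem:trich2nd}.\ref{enu:fromL2noL}. Your explicit remark about carrying a single representative through the argument is a point the paper leaves implicit, but it does not change the substance.
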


\begin{proof}
\ref{enu:trichIntSets}: If $x\notin[A_{\eps}^{c}]$, then $x_{\eps}\in A_{\eps}$
for all $\eps\in K$ and for some $K\subzero I$. Set $L:=\left\{ \eps\in I\mid x_{\eps}\in A_{\eps}\right\} $,
so that $K\subseteq L\subzero I$. We have $x\in_{L}[A_{\eps}]$.
If $L^{c}\not\subzero I$, then $(0,\eps_{0}]\subseteq L$ for some
$\eps_{0}$, i.e.~$x\in[A_{\eps}]$. On the contrary, if $L^{c}\subzero I$,
then $x\in_{L^{c}}[A_{\eps}^{c}]$.

\ref{enu:dichIntSets}: We can proceed as in the proof of Lem.~\ref{lem:trich2nd}.\ref{enu:fromL2noL}
using \ref{enu:trichIntSets}.
\end{proof}

\section{Hypernatural numbers}

We start by defining the set of hypernatural numbers in $\rcrho$
and the set of $\rho$-moderate nets of natural numbers:
\begin{defn}
\label{def:hypernatural}We set
\begin{enumerate}
\item $\hypNr:=\left\{ [n_{\eps}]\in\rcrho\mid n_{\eps}\in\N\quad\forall\eps\right\} $
\item $\N_{\rho}:=\left\{ (n_{\eps})\in\R_{\rho}\mid n_{\eps}\in\N\quad\forall\eps\right\} .$
\end{enumerate}
\end{defn}

\noindent Therefore, $n\in\hypNr$ if and only if there exists $(x_{\eps})\in\R_{\rho}$
such that $n=[\text{int}(|x_{\eps}|)]$. Clearly, $\N\subset\hypNr$.
Note that the integer part function $\text{int}(-)$ is not well-defined
on $\rcrho$. In fact, if $x=1=\left[1-\rho_{\eps}^{1/\eps}\right]=\left[1+\rho_{\eps}^{1/\eps}\right]$,
then $\text{int}\left(1-\rho_{\eps}^{1/\eps}\right)=0$ whereas $\text{int}\left(1+\rho_{\eps}^{1/\eps}\right)=1$,
for $\eps$ sufficiently small. Similar counter examples can be set
for floor and ceiling functions. However, the nearest integer function
is well defined on $\hypNr$, as proved in the following
\begin{lem}
\label{lem:nearestInt}Let $(n_{\eps})\in\N_{\rho}$ and $(x_{\eps})\in\R_{\rho}$
be such that $[n_{\eps}]=[x_{\eps}]$. Let $\text{\emph{rpi}}:\R\ra\N$
be the function rounding to the nearest integer with tie breaking
towards positive infinity, i.e.~$\text{rpi}(x)=\lfloor x+\frac{1}{2}\rfloor$.
Then $\text{\emph{rpi}}(x_{\eps})=n_{\eps}$ for $\eps$ small. The
same result holds using $\text{\emph{rni}}:\R\ra\N$, the function
rounding half towards $-\infty$.
\end{lem}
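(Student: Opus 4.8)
The plan is to unwind the equality $[n_\eps]=[x_\eps]$ into the negligibility estimate it encodes and then show that this forces $\text{rpi}(x_\eps)=n_\eps$ for small $\eps$ by a direct argument about how far $x_\eps$ can sit from the integer $n_\eps$. First I would recall that $[n_\eps]=[x_\eps]$ means $(n_\eps - x_\eps)\sim_\rho 0$, so by the defining estimate of $\sim_\rho$ in Def.~\ref{def:RCGN} we have in particular, taking $n=1$,
\[
\forall^0\eps:\ |x_\eps - n_\eps|\le\rho_\eps^{1}.
\]
Since $\rho=(\rho_\eps)\to0$, we also have $\rho_\eps<\tfrac12$ for $\eps$ small, and hence $\forall^0\eps:\ |x_\eps-n_\eps|<\tfrac12$. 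This is the crux: $x_\eps$ lies strictly within distance $\tfrac12$ of the integer $n_\eps$.

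Next I would argue that an integer $n_\eps$ with $|x_\eps - n_\eps|<\tfrac12$ is exactly the integer to which the rounding function assigns $x_\eps$. Concretely, $|x_\eps-n_\eps|<\tfrac12$ gives $n_\eps-\tfrac12<x_\eps<n_\eps+\tfrac12$, so $n_\eps\le x_\eps+\tfrac12<n_\eps+1$, whence $\lfloor x_\eps+\tfrac12\rfloor=n_\eps$, i.e.~$\text{rpi}(x_\eps)=n_\eps$, for $\eps$ small. The strict inequality is what makes the tie-breaking convention irrelevant here: because $\rho_\eps^1$ is a strict upper bound below $\tfrac12$, we never land exactly on a half-integer, so the value of $x_\eps+\tfrac12$ is strictly below the next integer and the floor is unambiguous.

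For the final sentence, the same computation applies verbatim to $\text{rni}(x)=\lceil x-\tfrac12\rceil$ (rounding half towards $-\infty$): from $n_\eps-\tfrac12<x_\eps<n_\eps+\tfrac12$ one gets $n_\eps-1<x_\eps-\tfrac12<n_\eps$, so $\lceil x_\eps-\tfrac12\rceil=n_\eps$ for $\eps$ small. Since the gap from $x_\eps$ to $n_\eps$ is strictly less than $\tfrac12$, both rounding conventions agree on these nets and return $n_\eps$.

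I do not expect any genuine obstacle here; the statement is essentially a quantitative restatement of the fact that two representatives of the same hypernatural differ by a negligible net, combined with the elementary observation that the only integer within distance $\tfrac12$ of a real number is its nearest integer. The one point requiring a little care is to extract a \emph{strict} bound below $\tfrac12$ (so that the half-integer tie case is excluded and the tie-breaking rule plays no role); this is handled by using the $n=1$ instance of the $\sim_\rho$ estimate together with $\rho_\eps\to0$, rather than trying to argue at the exact threshold.
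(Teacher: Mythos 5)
Your proof is correct and follows essentially the same route as the paper's: both extract the estimate $|x_\eps-n_\eps|\le\rho_\eps<\tfrac12$ for $\eps$ small from $[n_\eps]=[x_\eps]$ and then conclude $n_\eps\le x_\eps+\tfrac12<n_\eps+1$, so the floor of $x_\eps+\tfrac12$ is $n_\eps$. Your explicit treatment of the strictness (which makes the tie-breaking convention irrelevant) and of the $\text{rni}$ case is a slightly more detailed write-up of what the paper dispatches as ``an analogous argument,'' but the substance is identical.
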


\begin{proof}
We have $\text{rpi}(x)=\lfloor x+\frac{1}{2}\rfloor$, where $\lfloor-\rfloor$
is the floor function. For $\eps$ small, $\rho_{\eps}<\frac{1}{2}$
and, since $[n_{\eps}]=[x_{\eps}]$, always for $\eps$ small, we
also have $n_{\eps}-\rho_{\eps}+\frac{1}{2}<x_{\eps}+\frac{1}{2}<n_{\eps}+\rho_{\eps}+\frac{1}{2}$.
But $n_{\eps}\le n_{\eps}-\rho_{\eps}+\frac{1}{2}$ and $n_{\eps}+\rho_{\eps}+\frac{1}{2}<n_{\eps}+1$.
Therefore $\lfloor x_{\eps}+\frac{1}{2}\rfloor=n_{\eps}$. An analogous
argument can be applied to $\text{rni}(-)$.
\end{proof}
\noindent Actually, this lemma does not allow us to define a \emph{nearest
integer }function $\nint{}:\hypNr\ra\N_{\rho}$ as $\nint{([x_{\eps}])}:=\text{rpi}(x_{\eps})$
because if $[x_{\eps}]=[n_{\eps}]$, the equality $n_{\eps}=\text{rpi}(x_{\eps})$
holds only for $\eps$ small. A simpler approach is to choose a representative
$(n_{\eps})\in\N_{\rho}$ for each $x\in\hypNr$ and to define $\nint{(x)}:=(n_{\eps})$.
Clearly, we must consider the net $\left(\nint{(x)}_{\eps}\right)$
only for $\eps$ small, such as in equalities of the form $x=\left[\nint{(x)}_{\eps}\right]$.
This is what we do in the following
\begin{defn}
\label{def:nint}The \emph{nearest integer function} $\nint(-)$ is
defined by:
\begin{enumerate}
\item $\nint:\hypNr:\ra\N_{\rho}$
\item If $[x_{\eps}]\in\hypNr$ and $\nint\left([x_{\eps}]\right)=(n_{\eps})$
then $\forall^{0}\eps:\ n_{\eps}=\text{rpi}(x_{\eps})$.
\end{enumerate}
In other words, if $x\in\hypNr$, then $x=\left[\nint(x)_{\eps}\right]$
and $\nint(x)_{\eps}\in\N$ for all $\eps$. Another possibility is
to formulate Lem.~\ref{lem:nearestInt} as
\[
[x_{\eps}]\in\hypNr\quad\iff\quad[x_{\eps}]=[\text{rpi}(x_{\eps})].
\]
Therefore, without loss of generality we may always suppose that $x_{\eps}\in\N$
whenever $[x_{\eps}]\in\hypNr$.
\end{defn}

\begin{rem}
~
\begin{enumerate}
\item $\hyperN{\sigma}$, with the order $\le$ induced by $\RC{\sigma}$,
is a directed set; it is closed with respect to sum and product although
recursive definitions using $\hyperN{\sigma}$ are not possible.
\item In $\hyperN{\sigma}$ we can find several chains (totally ordered
subsets) such as: $\N$, $\N\cdot[\text{int}(\rho_{\eps}^{-k})]$
for a fixed $k\in\N$, $\{[\text{int}(\rho_{\eps}^{-k})]\mid k\in\N\}$.
\item Generally speaking, if $m$, $n\in\hyperN{\rho}$, $m^{n}\notin\hyperN{\rho}$
because the net $\left(m_{\eps}^{n_{\eps}}\right)$ can grow faster
than any power $(\rho_{\eps}^{-K})$. However, if we take two gauges
$\sigma$, $\rho$ satisfying $\sigma\le\rho$, using the net $\left(\sigma_{\eps}^{-1}\right)$
we can measure infinite nets that grow faster than $(\rho_{\eps}^{-K})$
because $\sigma_{\eps}^{-1}\ge\rho_{\eps}^{-1}$ for $\eps$ small.
Therefore, we can take $m$, $n\in\hyperN{\sigma}$ such that $\left(\nint{(m)}_{\eps}\right)$,
$\left(\nint{(n)}_{\eps}\right)\in\R_{\rho}$; we think at $m$, $n$
as $\sigma$-hypernatural numbers growing at most polynomially with
respect to $\rho$. Then, it is not hard to prove that if $\rho$
is an arbitrary gauge, and we consider the auxiliary gauge $\sigma_{\eps}:=\rho_{\eps}^{e^{1/\rho_{\eps}}}$.
then $m^{n}\in\hyperN{\sigma}$.
\item If $m\in\hypNr$, then $1^{m}:=\left[\left(1+z_{\eps}\right)^{m_{\eps}}\right]$,
where $(z_{\eps})$ is $\rho$-negligible, is well defined and $1^{m}=1$.
In fact, $\log(1+z_{\eps})^{m_{\eps}}$ is asymptotically equal to
$m_{\eps}z_{\eps}\to0$, and this shows that $\left(\left(1+z_{\eps}\right)^{m_{\eps}}\right)$
is moderate. Finally, $\left|(1+z_{\eps})^{m_{\eps}}-1\right|\le\left|z_{\eps}\right|m_{\eps}(1+z_{\eps})^{m_{\eps}-1}$
by the mean value theorem.
\end{enumerate}
\end{rem}

\section{\label{sec:Supremum-and-Infimum}Supremum and Infimum in $\rcrho$}

To solve the problems we explained in the introduction of this article,
it is important to generalize at least two main existence theorems
for limits: the Cauchy criterion and the existence of a limit of a
bounded monotone sequence. The latter is clearly related to the existence
of supremum and infimum, which cannot be always guaranteed in the
non-Archimedean ring $\rcrho$. As we will see more clearly later
(see also \cite{GarVer11}), to arrive at these existence theorems,
the notion of supremum, i.e.~the least upper bound, is not the correct
one. More appropriately, we can associate a notion of close supremum
(and close infimum) to every topology generated by a set of radii
(see Def.~\ref{def:setOfRadii}).
\begin{defn}
Let $\mathfrak{R}$ be a set of radii and let $\tau$ be the topology
on $\rcrho$ generated by $\mathfrak{R}$. Let $P\subseteq\rcrho$,
then we say that $\tau$ \emph{separates points of} $P$ if
\[
\forall p,q\in P:\ p\mathbb{\neq}q\ \Rightarrow\ \exists A,B\in\tau:\ p\in A,\ q\in B,\ A\cap B=\emptyset,
\]
i.e.~if $P$ with the topology induced by $\tau$ is Hausdorff.
\end{defn}

\begin{defn}
\label{def:topSup}Let $\tau$ be a topology on $\rcrho$ generated
by a set of radii $\mathfrak{R}$ that separates points of $P\subseteq\rcrho$
and let $S\subseteq\rcrho$. Then, we say that $\sigma$ is $\left(\tau,P\right)$\emph{-supremum
of} $S$ if
\begin{enumerate}
\item $\sigma\in P$;
\item \label{enu:1defSup}$\forall s\in S:\ s\leq\sigma$;
\item \label{enu:2defSupGen}$\sigma$ is a point of closure of $S$ in
the topology $\tau$, i.e.~if $\forall A\in\tau:\ \sigma\in A\ \Rightarrow\ \exists\bar{s}\in S\cap A$.
\end{enumerate}
Similarly, we say that $\iota$ is $(\tau,P)$-\emph{infimum of }$S$
if
\begin{enumerate}
\item $\iota\in P$;
\item $\forall s\in S:\ \iota\le s$;
\item $\iota$ is a point of closure of $S$ in the topology $\tau$, i.e.~if
$\forall A\in\tau:\ \iota\in A\ \Rightarrow\ \exists\bar{s}\in S\cap A$.
\end{enumerate}
In particular, if $\tau$ is the sharp topology and $P=\rcrho$, then
following \cite{GarVer11}, we simply call the $(\tau,P)$-supremum,
the \emph{close supremum} (the adjective close will be omitted if
it will be clear from the context) or the \emph{sharp supremum} if
we want to underline the dependency on the topology. Analogously,
if $\tau$ is the Fermat topology and $P=\R$, then we call the $(\tau,P)$-supremum
the \emph{Fermat supremum}. Note that \ref{enu:2defSupGen} implies
that if $\sigma$ is $(\tau,P)$-supremum of $S$, then necessarily
$S\ne\emptyset$.

\end{defn}

\begin{rem}
\label{rem:sup}\ 
\begin{enumerate}
\item \label{rem:sharpSupremum}Let $S\subseteq\rcrho$, then from Def.~\ref{def:setOfRadii}
and Thm.~\ref{thm:intersectionBalls} we can prove that $\sigma$
is the $(\tau,P)$-supremum of $S$ if and only if
\begin{enumerate}[label=(\alph*)]
\item \label{enu:1defSupbis}$\forall s\in S:\ s\leq\sigma$;
\item \label{enu:2defSup}$\forall r\in\mathfrak{R}\,\exists\bar{s}\in S:\ \sigma-r\leq\bar{s}$.
\end{enumerate}
In particular, for the sharp supremum, \ref{enu:2defSup} is equivalent
to
\begin{equation}
\forall q\in\N\,\exists\bar{s}\in S:\ \sigma-\diff{\rho}^{q}\le\bar{s}.\label{eq:2defSup}
\end{equation}
In the following of this article, we will also mainly consider the
sharp topology and the corresponding notions of sharp supremum and
infimum.
\item If there exists the sharp supremum $\sigma$ of $S\subseteq\rcrho$
and $\sigma\notin S$, then from \eqref{eq:2defSup} it follows that
$S$ is necessarily an infinite set. In fact, applying \eqref{eq:2defSup}
with $q_{1}:=1$ we get the existence of $\bar{s}_{1}\in S$ such
that $\sigma-\diff{\rho}^{q_{1}}<\bar{s_{1}}$. We have $\bar{s}_{1}\ne\sigma$
because $\sigma\notin S$. Hence, Lem.~\ref{lem:negationsSubpoints}.\ref{enu:negEqual}
and Def.~\ref{def:topSup}.\ref{enu:1defSup} yield that $\bar{s}_{1}\sbpt{<}\sigma$.
Therefore, $\sigma-\bar{s}_{1}\sbpt{\ge}\diff{\rho}^{q_{2}}$ for
some $q_{2}>q_{1}$. Applying again \eqref{eq:2defSup} we get $\sigma-\diff{\rho}^{q_{2}}<\bar{s}_{2}$
for some $\bar{s}_{2}\in S\setminus\{\bar{s}_{1}\}$. Recursively,
this process proves that $S$ is infinite. On the other hand, if $S=\{s_{1},\ldots,s_{n}\}$
and $s_{i}=[s_{i\eps}]$, then $\sup\left(\left[\{s_{1\eps},\ldots,s_{n\eps}\}\right]\right)=s_{1}\vee\ldots\vee s_{n}$.
In fact, $s_{1}\vee\ldots\vee s_{n}=\left[\max_{i=1,\ldots,n}s_{n\eps}\right]\in[\{s_{1\eps},\ldots,s_{n\eps}\}]$.
\item If $\exists\sup(S)=\sigma$, then there also exists the $\sup(\text{interl}(S))=\sigma$,
where (see \cite{ObVe08}) we recall that
\[
\text{interl}(S):=\left\{ \sum_{j=1}^{m}e_{S_{j}}s_{j}\mid m\in\N,\ S_{j}\subzero I,\ s_{j}\in S\ \forall j\right\} ,\ e_{S}:=\left[1_{S}\right]\in\rti
\]
($1_{S}$ is the characteristic function of $S\subseteq I$). This
follows from $S\subseteq\text{interl}(S)$. Vice versa, if $\exists\sup(\text{interl}(S))=\sigma$
and $\text{interl}(S)\subseteq S$ (e.g.~if $S$ is an internal or
strongly internal set), then also $\exists\sup(S)=\sigma$.
\end{enumerate}
\end{rem}

\begin{thm}
There is at most one sharp supremum of $S$, which is denoted by $\sup(S)$.
\end{thm}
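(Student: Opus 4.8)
The plan is to work entirely from the representative-free characterization of the sharp supremum recorded in Rem.~\ref{rem:sup}.\ref{rem:sharpSupremum}: $\sigma$ is the sharp supremum of $S$ precisely when \ref{enu:1defSupbis} $s\le\sigma$ for all $s\in S$ and the approximation property \eqref{eq:2defSup} holds. Suppose $\sigma_{1}$ and $\sigma_{2}$ are both sharp suprema of $S$. Since $\le$ is a partial order on $\rcrho$, it is antisymmetric, so it suffices to prove $\sigma_{2}\le\sigma_{1}$ and then conclude by the symmetric argument that $\sigma_{1}\le\sigma_{2}$.

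First I would fix $q\in\N$ and apply \eqref{eq:2defSup} to $\sigma_{2}$ to obtain $\bar{s}\in S$ with $\sigma_{2}-\diff{\rho}^{q}\le\bar{s}$. Because $\sigma_{1}$ is an upper bound of $S$ (property \ref{enu:1defSupbis} applied to $\sigma_{1}$), we have $\bar{s}\le\sigma_{1}$, and hence $\sigma_{2}-\diff{\rho}^{q}\le\sigma_{1}$, i.e.\ $\sigma_{2}-\sigma_{1}\le\diff{\rho}^{q}$. As $q$ is arbitrary, this yields $\sigma_{2}-\sigma_{1}\le\diff{\rho}^{q}$ for every $q\in\N$.

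The heart of the argument, and the one step I expect to require genuine care, is the passage from ``$\sigma_{2}-\sigma_{1}\le\diff{\rho}^{q}$ for all $q$'' to $\sigma_{2}-\sigma_{1}\le0$. Writing $\sigma_{2}-\sigma_{1}=[d_{\eps}]$ and unwinding the definition of $\le$ in $\rcrho$, the hypothesis with exponent $n+1$ gives $\forall m\,\forall^{0}\eps:\ d_{\eps}\le\rho_{\eps}^{n+1}+\rho_{\eps}^{m}$; instantiating $m=n+1$ produces $\forall^{0}\eps:\ d_{\eps}\le2\rho_{\eps}^{n+1}$. Since $\rho_{\eps}\to0$ we have $\rho_{\eps}\le\tfrac{1}{2}$ for $\eps$ small, so $2\rho_{\eps}^{n+1}\le\rho_{\eps}^{n}$ eventually and therefore $\forall^{0}\eps:\ d_{\eps}\le\rho_{\eps}^{n}$. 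As $n$ is arbitrary, this is exactly $[d_{\eps}]\le0$, that is $\sigma_{2}\le\sigma_{1}$. An alternative, equivalent route is by contradiction: if $\sigma_{2}-\sigma_{1}\not\le0$ then $\sigma_{2}-\sigma_{1}\sbpt{>}0$ by Lem.~\ref{lem:negationsSubpoints}.\ref{enu:neg-le}, so Lem.~\ref{lem:mayer} applied in $\rcrho|_{L}$ furnishes $m\in\N$ with $d_{\eps}>\rho_{\eps}^{m}$ for $\eps\in L$ small on some $L\subzero I$, which directly contradicts the restriction to $L$ of $\sigma_{2}-\sigma_{1}\le\diff{\rho}^{m+1}$.

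Exchanging the roles of $\sigma_{1}$ and $\sigma_{2}$ gives $\sigma_{1}\le\sigma_{2}$, and antisymmetry of $\le$ then forces $\sigma_{1}=\sigma_{2}$. I would remark that the Hausdorff (point-separating) hypothesis built into Def.~\ref{def:topSup} plays no role in this computation: it is the upper-bound condition \ref{enu:1defSupbis} combined with the approximation property \eqref{eq:2defSup} that already pins the value down, while separation is what makes the notion topologically well-behaved rather than what delivers uniqueness here.
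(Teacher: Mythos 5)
Your proposal is correct and follows essentially the same route as the paper's proof: apply the approximation property \eqref{eq:2defSup} to one supremum, use the upper-bound property \ref{enu:1defSup} of the other to get $\sigma_{2}-\diff{\rho}^{q}\le\sigma_{1}$, symmetrize, and let $q\to+\infty$. The only difference is that you make explicit the final passage from ``$\sigma_{2}-\sigma_{1}\le\diff{\rho}^{q}$ for all $q$'' to $\sigma_{2}\le\sigma_{1}$, which the paper leaves implicit (and which it justifies elsewhere, in the proof of Lem.~\ref{lem:negationsSubpoints}.\ref{enu:neg-le}); your observation that the separation hypothesis is not needed for uniqueness is also accurate.
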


\begin{proof}
Assume that $\sigma_{1}$ and $\sigma_{2}$ are supremum of $S$.
That is Def.~\ref{def:topSup}.\ref{enu:1defSup} and \eqref{eq:2defSup}
hold both for $\sigma_{1}$, $\sigma_{2}$. Then, for all fixed $q\in\N$,
there exists $\bar{s}_{2}\in S$ such that $\sigma_{2}-\diff{\rho}^{q}\le\bar{s}_{2}$.
Hence $\bar{s}_{2}\leq\sigma_{1}$ because $\bar{s}_{2}\in S$. Analogously,
we have that $\sigma_{1}-\diff{\rho}^{q}\le\bar{s}_{1}\leq\sigma_{2}$
for some $\bar{s}_{1}\in S$. Therefore, $\sigma_{2}-\diff{\rho}^{q}\leq\sigma_{1}\leq\sigma_{2}+\diff{\rho}^{q}$,
and this implies $\sigma_{1}=\sigma_{2}$ since $q\in\N$ is arbitrary.
\end{proof}
In \cite{GarVer11}, the notation $\overline{\sup}(S)$ is used for
the close supremum. On the other hand, we will \emph{never} use the
notion of supremum as least upper bound. For these reasons, we prefer
to use the simpler notation $\sup(S)$. Similarly, we use the notation
$\inf(S)$ for the close (or sharp) infimum. From Rem\@.~\ref{rem:sup}.\ref{enu:1defSupbis}
and \ref{enu:2defSup} it follows that
\begin{equation}
\inf(S)=-\sup(-S)\label{eq:infSup}
\end{equation}
in the sense that the former exists if and only if the latter exists
and in that case they are equal. For this reason, in the following
we only study the supremum.
\begin{example}
\label{exa:supInf}~
\begin{enumerate}
\item Let $K=[K_{\eps}]\fcmp\rcrho$ be a functionally compact set (cf.~\cite{GK15}),
i.e.~$K\subseteq B_{M}(0)$ for some $M\in\rcrho_{>0}$ and $K_{\eps}\Subset\R$
for all $\eps$. We can then define $\sigma_{\eps}:=\sup(K_{\eps})\in K_{\eps}$.
From $K\subseteq B_{M}(0)$, we get $\sigma:=[\sigma_{\eps}]\in K$.
It is not hard to prove that $\sigma=\sup(K)=\max(K)$. Analogously,
we can prove the existence of the sharp minimum of $K$.
\item If $S=(a,b)$, where $a$, $b\in\rcrho$ and $a\le b$, then $\sup(S)=b$
and $\inf(S)=a$.
\item If $S=\left\{ \frac{1}{n}\mid n\in\hypNr\right\} $, then $\inf(S)=0$.
\item Like in several other non-Archimedean rings, both sharp supremum and
infimum of the set $D_{\infty}$ of all infinitesimals do not exist.
In fact, by contradiction, if $\sigma$ were the sharp supremum of
$D_{\infty}$, then from \eqref{eq:2defSup} for $q=1$ we would get
the existence of $\bar{h}\in D_{\infty}$ such that $\sigma\leq\bar{h}+\diff{\rho}$.
But then $\sigma\in D_{\infty}$, so also $2\sigma\in D_{\infty}$.
Therefore, we get $2\sigma\le\sigma$ because $\sigma$ is an upper
bound of $D_{\infty}$, and hence $\sigma=0\ge\diff{\rho}$, a contradiction.
Similarly, one can prove that there does not exist the infimum of
this set.
\item Let $S=\left(0,1\right)_{\mathbb{R}}=\left\{ x\in\mathbb{R\,}|\,0<x<1\right\} $,
then clearly $\sigma=1$ is the Fermat supremum of $S$ whereas there
does not exist the sharp supremum of $S$. Indeed, if $\sigma=\sup(S)$,
then $s\le\sigma\le\bar{s}+\diff{\rho}$ for all $s\in S$ and for
some $\bar{s}\in S$. Taking any $s\in(\bar{s},1)_{\R}\subseteq S$
we get $s\le\sigma\le\bar{s}+\diff{\rho}$, which, for $\eps\to0$,
implies $s\le\bar{s}$ because $s$, $\bar{s}\in\R$. This contradicts
$s\in(\bar{s},1)$. In particular, $1$ is not the sharp supremum.
This example shows the importance of Def.~\ref{def:topSup}, i.e.~that
the best notion of supremum in a non-Archimedean setting depends on
a fixed topology.
\item \label{enu:01+dynamicPt}Let $S=(0,1)\cup\{\hat{s}\}$ where $\hat{s}|_{L}=2$,
$\hat{s}|_{L^{c}}=\frac{1}{2}$, $L\subzero I$, $L^{c}\subzero I$,
then $\nexists\,\sup(S)$. In fact, if $\exists\sigma:=\sup(S)$,
then $\sigma|_{L}\ge\hat{s}|_{L}=2$ and $\sigma|_{L^{c}}=1$. Assume
that $\exists\bar{s}\in S:\ \sigma-\diff{\rho}\le\bar{s}$, then $2-\diff{\rho}|_{L}\le\sigma|_{L}-\diff{\rho}|_{L}\le\bar{s}|_{L}$.
Thereby, $\bar{s}|_{L}>\frac{3}{2}$ and hence $\bar{s}\not\in(0,1)$
and $\bar{s}=\hat{s}$. We hence get $\sigma|_{L^{c}}-\diff{\rho}|_{L^{c}}\le\hat{s}|_{L^{c}}$,
i.e.~$1-\diff{\rho}|_{L^{c}}\le\frac{1}{2}$, which is impossible.
We can intuitively say that the subpoint $\hat{s}|_{L}$ creates a
``$\eps$-hole'' (i.e.~a ``hole'' only for some $\eps$) on the
right of $S$ and hence $S$ is not ``an $\eps$-continuum'' on
this side. Finally note that the point $u|_{L}:=2$ and $u|_{L^{c}}:=1$
is the least upper bound of $S$.
\end{enumerate}
\end{example}

\begin{lem}
\label{lem:propSupInf}Let $A$, $B\subseteq\rcrho$, then
\begin{enumerate}
\item \label{enu:homogPos}$\forall\lambda\in\rcrho_{>0}:\ \sup(\lambda A)=\lambda\sup(A)$,
in the sense that one supremum exists if and only if the other one
exists, and in that case they coincide;
\item \label{enu:homoNeg}$\forall\lambda\in\rcrho_{<0}:\ \sup(\lambda A)=\lambda\inf(A)$,
in the sense that one supremum/infimum exists if and only if the other
one exists, and in that case they coincide;
\end{enumerate}
Moreover, if $\exists\sup(A)$, $\sup(B)$, then:
\begin{enumerate}[resume]
\item \label{enu:incr}If $A\subseteq B$, then $\sup(A)\le\sup(B)$;
\item \label{enu:addit}$\sup(A+B)=\sup(A)+\sup(B)$;
\item \label{enu:prodSup}If $A$, $B\subseteq\rcrho_{\ge0}$, then $\sup(A\cdot B)=\sup(A)\cdot\sup(B)$.
\end{enumerate}
\end{lem}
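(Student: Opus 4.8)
The plan is to use throughout the working characterization of the sharp supremum from Rem.~\ref{rem:sup}.\ref{rem:sharpSupremum}: an element $\sigma=\sup(S)$ if and only if (a) $\sigma$ is an upper bound of $S$, and (b) $\forall r\in\rcrho_{>0}\,\exists\bar s\in S:\ \sigma-r\le\bar s$. I prefer the general-radius form of (b) over the equivalent form \eqref{eq:2defSup}, because the scaling and product arguments below rescale the radius $r$ by invertible factors, which need not preserve the special radii $\diff{\rho}^q$.

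For \ref{enu:homogPos}, suppose $\alpha:=\sup(A)$ exists and fix $\lambda\in\rcrho_{>0}$. Since $a\le\alpha$ for all $a\in A$ and $\lambda>0$, the element $\lambda\alpha$ is an upper bound of $\lambda A$. For (b), given $r\in\rcrho_{>0}$, note $\lambda^{-1}r\in\rcrho_{>0}$ (as $\lambda$ is invertible), so there is $a\in A$ with $\alpha-\lambda^{-1}r\le a$; multiplying by $\lambda>0$ gives $\lambda\alpha-r\le\lambda a\in\lambda A$. Hence $\lambda\alpha=\sup(\lambda A)$. The converse ``exists iff exists'' follows by applying this to $\lambda^{-1}\in\rcrho_{>0}$ and the set $\lambda A$. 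Part \ref{enu:homoNeg} then reduces to \ref{enu:homogPos} and \eqref{eq:infSup}: writing $\lambda=-\mu$ with $\mu\in\rcrho_{>0}$, one has $\lambda A=-(\mu A)$ and $\sup(-(\mu A))=-\inf(\mu A)$ by \eqref{eq:infSup}, while $\inf(\mu A)=\mu\inf(A)$ is the infimum analogue of \ref{enu:homogPos}, itself obtained from \ref{enu:homogPos} via \eqref{eq:infSup}.

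For \ref{enu:incr}, let $\alpha:=\sup(A)$ and $\beta:=\sup(B)$. Since $A\subseteq B$, $\beta$ is an upper bound of $A$. Applying (b) for $\alpha$ with the radii $\diff{\rho}^q$ yields, for each $q\in\N$, some $a\in A$ with $\alpha-\diff{\rho}^q\le a\le\beta$, hence $\alpha\le\beta+\diff{\rho}^q$; letting $q\to+\infty$ gives $\alpha\le\beta$ (recall that $\gamma\le\diff{\rho}^q$ for all $q\in\N$ forces $\gamma\le0$, since $2\rho_\eps^{q+1}\le\rho_\eps^q$ for $\eps$ small). For \ref{enu:addit}, set $\alpha:=\sup(A)$, $\beta:=\sup(B)$; then $\alpha+\beta$ upper-bounds $A+B$, and given $r\in\rcrho_{>0}$ I apply (b) to $A$ and to $B$ with the halved radius $\tfrac12 r\in\rcrho_{>0}$ (a radius by Def.~\ref{def:setOfRadii}) to obtain $a,b$ with $(\alpha+\beta)-r\le a+b\in A+B$; thus $\sup(A+B)=\alpha+\beta$.

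The main obstacle is the multiplicative case \ref{enu:prodSup}. With $\alpha:=\sup(A)$, $\beta:=\sup(B)$ and $A,B\subseteq\rcrho_{\ge0}$, one checks $ab\le\alpha\beta$ for $a\in A$, $b\in B$ using $0\le a\le\alpha$ and $0\le b\le\beta$, so $\alpha\beta$ is an upper bound of $A\cdot B$. The difficulty is condition (b): after choosing $a,b$ within a radius $\delta$ of $\alpha,\beta$, the error splits as $\alpha\beta-ab=\alpha(\beta-b)+b(\alpha-a)\le(\alpha+\beta)\delta$, using $\alpha\ge0$ and $0\le b\le\beta$. To force $(\alpha+\beta)\delta\le r$ I cannot simply divide by $\alpha+\beta$, which need not be invertible; instead I take $\delta:=r\,(\alpha+\beta+1)^{-1}\in\rcrho_{>0}$, which is legitimate because $\alpha+\beta+1\ge1$ is invertible by Lem.~\ref{lem:mayer}, and then $(\alpha+\beta)\delta\le r$. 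Choosing $a\in A$, $b\in B$ with $\alpha-\delta\le a$ and $\beta-\delta\le b$ gives $\alpha\beta-r\le ab\in A\cdot B$, completing the proof that $\sup(A\cdot B)=\alpha\beta$.
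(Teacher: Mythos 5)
Your proof is correct and follows essentially the same route as the paper: the scaling argument for \ref{enu:homogPos}, the reduction of \ref{enu:homoNeg} to \ref{enu:homogPos} via \eqref{eq:infSup}, and the radius-halving/rescaling arguments for \ref{enu:addit} and \ref{enu:prodSup} (for which the paper only states that they ``follow easily''). The only real differences are cosmetic: you prove \ref{enu:incr} directly by passing to the limit in $\sup(A)\le\sup(B)+\diff{\rho}^{q}$, whereas the paper argues by contradiction using subpoints and Lem.~\ref{lem:negationsSubpoints}, and your explicit handling of the possible non-invertibility of $\sup(A)+\sup(B)$ in \ref{enu:prodSup} via the factor $\left(\sup(A)+\sup(B)+1\right)^{-1}$ is a correct detail the paper leaves implicit.
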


\begin{proof}
\ref{enu:homogPos}: If $\exists\sup(\lambda A)$, then we have $a\le\frac{1}{\lambda}\sup(\lambda A)$
for all $a\in A$. For all $q\in\N$, we can find $\bar{a}\in A$
such that $\sup(\lambda A)-\lambda\bar{a}\le\diff{\rho}^{q}$. Thereby,
$\frac{1}{\lambda}\sup(\lambda A)-\bar{a}\le\frac{1}{\lambda}\diff{\rho}^{q}\to0$
as $q\to+\infty$ because $\lambda$ is moderate. This proves that
$\exists\sup(A)=\frac{1}{\lambda}\sup(\lambda A)$. Similarly, we
can prove the opposite implication.

\ref{enu:homoNeg}: From \ref{enu:homogPos} and \eqref{eq:infSup}
we get: $\sup(\lambda A)=\sup(-\lambda(-A))=-\lambda\sup(-A)=\lambda\inf(A)$.

\ref{enu:incr}: By contradiction, using Lem.~\ref{lem:negationsSubpoints}.\ref{enu:neg-le},
if $\sup(A)>_{L}\sup(B)$ for some $L\subzero I$, then $\sup(A)-\sup(B)>_{L}\diff{\rho}^{q}$
for some $q\in\N$ by Lem.~\ref{lem:mayer} for the ring $\rcrho|_{L}$.
Property \eqref{eq:2defSup} yields $\sup(A)-\diff{\rho}^{q}\le\bar{a}$
for some $\bar{a}\in A$, and $\bar{a}\le\sup(B)$ because $A\subseteq B$.
Thereby, $\sup(A)-\sup(B)\le\diff{\rho}^{q}$, which implies $\diff{\rho}^{q}<_{L}\diff{\rho}^{q}$,
a contradiction.

\ref{enu:addit} and \ref{enu:prodSup} follow easily from Def.~\ref{def:topSup}.\ref{enu:1defSup}
and \eqref{eq:2defSup}.
\end{proof}
In the next section, we introduce in the non-Archimedean framework
$\rcrho$ how to approximate $\sup(S)$ of $S\subseteq\rcrho$ using
points of $S$ and upper bounds, and the non-Archimedean analogous
of the notion of upper bound.

\subsection{\label{subsec:Archimedean-upper-bound}Approximations of $Sup$,
completeness from above and Archimedean upper bounds}

In the real field, we have the following peculiar properties:
\begin{enumerate}
\item \label{enu:LUPSup}The notion of least upper bound coincides with
that of close supremum, i.e.~it satisfies property \eqref{eq:closedSupR}.
We can hence question when these two notions coincide also in $\rcrho$.
Example \ref{exa:supInf}.\ref{enu:01+dynamicPt} shows that the answer
is not trivial. A first solution of this problem is already contained
in \cite[Prop.~1.4]{GarVer11}, where it is shown that the close supremum,
assuming that it exists, coincides with the least upper bound.
\item \label{enu:AUB}The notion of upper bound in $\R$ is very useful
because it entails the existence of the supremum. Clearly, since there
are infinite upper bounds but only one supremum, the notion of upper
bound results to be really useful in estimates with inequalities.
Moreover, in the ring $\rcrho$, the presence of infinite numbers
(of different magnitudes) allows one to have trivial upper bounds,
such as in the case $S=(0,1)$ and $M=\diff{\rho}^{-1}$, or $S=(0,\diff{\rho}^{-1})$
and $M=\diff{\rho}^{-2}$. Therefore, we can also investigate whether
we can consider non trivial upper bounds, i.e.~numbers which are,
intuitively, of the same order of magnitude of the elements of $S\subseteq\rcrho$.
On the other hand, example \ref{exa:supInf}.\ref{enu:01+dynamicPt}
shows that with respect to any reasonable definition of ``same order
of magnitude'', the upper bound $m=3$ must be of the same order
of any point in $S$, although $\nexists\,\sup(S)$. We will solve
this problem by introducing the definition of \emph{Archimedean upper
bound}.
\item \label{enu:seqSup}If $\emptyset\ne S\subseteq\R$ admits an upper
bound, then $\sup(S)$ can be arbitrarily approximated using upper
bounds and points of $S$. When is this possible if $\emptyset\ne S\subseteq\rcrho$?
\end{enumerate}
\noindent Example \ref{exa:supInf}.\ref{enu:01+dynamicPt} shows
that these problems cannot be solved in general, and we are hence
searching for a useful sufficient condition on $S$. As we will see
more clearly below, we could also say that we are searching for a
practical notion or procedure ``at the $\eps$-level'' (i.e.~working
on representatives) to determine whether a set has the supremum or
the least upper bound. However, we are actually far from a real solution
of this non trivial problem, and the present section presents only
preliminary steps in this direction.

We first prove the following useful characterization of the existence
of $\sup(S)$, which also solves problem \ref{enu:seqSup}:
\begin{thm}
\label{thm:hans}Let $S\subseteq\rcrho$, and let $U\subseteq\rcrho$
denote the set of upper bounds of $S$. Then $S$ has supremum if
and only if
\begin{equation}
\forall q\in\N\,\exists u_{q}\in U\,\exists s_{q}\in S:\ u_{q}-s_{q}\le\diff{\rho}^{q}.\label{eq:hans}
\end{equation}
\end{thm}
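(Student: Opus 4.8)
The plan is to prove the two implications separately, exploiting the characterization of $\sup(S)$ given in Remark~\ref{rem:sup}.\ref{rem:sharpSupremum}, namely that $\sigma=\sup(S)$ iff $\sigma$ is an upper bound satisfying \eqref{eq:2defSup}. The forward direction ($\Rightarrow$) should be nearly immediate: if $\sigma=\sup(S)$ exists, then $\sigma\in U$ is itself an upper bound, and for each $q\in\N$ condition \eqref{eq:2defSup} produces $s_q\in S$ with $\sigma-\diff{\rho}^q\le s_q$. Setting $u_q:=\sigma$ for all $q$ gives $u_q-s_q=\sigma-s_q\le\diff{\rho}^q$, which is exactly \eqref{eq:hans}. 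So the real content is the converse.

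For the converse ($\Leftarrow$), I would first extract from \eqref{eq:hans} a candidate for the supremum. The natural guess is to build $\sigma$ at the level of representatives. Writing $u_q=[u_{q,\eps}]$ and $s_q=[s_{q,\eps}]$, the inequalities $u_q-s_q\le\diff{\rho}^q$ say the upper bounds $u_q$ and the points $s_q$ get arbitrarily sharply close, so both sequences should be Cauchy in the sharp topology with a common limit. Indeed, for $p\le q$ one expects $|u_p-u_q|$ and $|s_p-s_q|$ to be bounded by a power of $\diff{\rho}$ growing with $p$: since each $u_q\ge s_p$ (an upper bound dominates every point) and $u_p-s_p\le\diff{\rho}^p$, one gets control like $u_p-u_q\le u_p-s_p\le\diff{\rho}^p$ in one direction, and the reverse estimate from $s_p\le s_q\vee(\text{something})$ needs the analogous bound. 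Because $\rcrho$ is Cauchy complete in the sharp topology (stated in Section~\ref{subsec:Topologies}), the Cauchy sequence $(u_q)_q$ converges to some $\sigma\in\rcrho$, and $(s_q)_q$ converges to the same $\sigma$ since $u_q-s_q\to0$.

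It then remains to verify that this $\sigma$ is genuinely the sharp supremum, i.e.\ that it satisfies \ref{enu:1defSupbis} and \ref{enu:2defSup} of Remark~\ref{rem:sup}. For the upper-bound property, fix $s\in S$; since each $u_q\in U$ we have $s\le u_q$ for all $q$, and passing to the limit $u_q\to\sigma$ (using that $\le$ is compatible with sharp limits) yields $s\le\sigma$. For the approximation property \eqref{eq:2defSup}, fix $q\in\N$; since $s_q\to\sigma$ we have $\sigma-s_q\to0$, so for large enough index $p$ we get $\sigma-s_p\le\diff{\rho}^q$ with $s_p\in S$, which is exactly what \eqref{eq:2defSup} demands. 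This establishes $\sigma=\sup(S)$.

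The step I expect to be the main obstacle is making the Cauchy estimates fully rigorous. The inequality $u_q-s_q\le\diff{\rho}^q$ is a one-sided bound on the \emph{difference} of an upper bound and a point, and to conclude that $(u_q)_q$ and $(s_q)_q$ are Cauchy I must combine it with the order relations $s_p\le u_q$ in both index directions; the delicate point is that the order $\le$ in $\rcrho$ is only partial, so I cannot freely assume $s_p\le s_q$ or $u_p\le u_q$. The clean way around this is to avoid requiring monotonicity: instead use $|u_p-u_q|\le|u_p-s_p|+|s_p-u_q|$ together with $0\le u_q-s_p$ and $u_p-s_p\le\diff{\rho}^p$, controlling the cross term $u_q-s_p$ by $u_q-s_q$ plus $s_q-s_p$ and iterating—or, more simply, I would use the dichotomy tools (Lem.~\ref{lem:trich2nd}) to reduce to the classical one-dimensional estimate on representatives over cofinal subsets $L\subzero I$, where the ordinary real-number argument for Cauchyness of nested approximations applies verbatim. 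Passing back from the subpoint estimates to a global bound via Lem.~\ref{lem:trich2nd}.\ref{enu:fromL2noL} is exactly the kind of step the earlier machinery was designed to handle.
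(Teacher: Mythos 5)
Your proposal is correct and follows essentially the same route as the paper: the forward direction via $u_q:=\sigma$ and \eqref{eq:2defSup}, and the converse by showing $(u_q)_{q\in\N}$ is Cauchy from the cross inequalities $s_p\le u_q$ combined with \eqref{eq:hans}, then invoking Cauchy completeness of the sharp topology and passing the properties to the limit. The worry in your final paragraph is unfounded: the two one-sided bounds $u_p-u_q\le u_p-s_p\le\diff{\rho}^{p}$ and $u_q-u_p\le u_q-s_q\le\diff{\rho}^{q}$ are genuine global inequalities in $\rcrho$ (every $u_j$ dominates every $s_i$ because each $u_j$ is an upper bound), so $|u_p-u_q|\le\diff{\rho}^{\min(p,q)}$ follows directly and no subpoint or dichotomy machinery is needed --- this is exactly the chain of inequalities the paper writes down.
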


\begin{proof}
If $\sigma=\sup(S)$, then \eqref{eq:hans} simply follows by setting
$u_{q}:=\sigma$ and $s_{q}\in S$ from \eqref{eq:2defSup}. Vice
versa, if \eqref{eq:hans} holds, then 
\[
-\diff{\rho}^{q}\le s_{q}-u_{q}\le u_{q+1}-u_{q}\le u_{q+1}-s_{q+1}\le\diff{\rho}^{q+1}\quad\forall q\in\N.
\]
Thereby, $-(p-q)\diff{\rho}^{q}\le u_{p}-u_{q}\le(p-q)\diff{\rho}^{p}$
for all $p>q$, and hence $-\diff{\rho}^{\min(p,q)-1}\le u_{p}-u_{q}\le\diff{\rho}^{\min(p,q)-1}$
for all $p$, $q\in\N_{>0}$. This shows that $(u_{q})_{q\in\N}$
is a Cauchy sequence which thus converges to some $\sigma\in\rcrho$.
Property \eqref{eq:hans} yields that also $(s_{q})_{q\in\N}\to\sigma$,
and this implies condition \eqref{eq:2defSup}. Since each $u_{q}$
is an upper bound, for all $s\in S$ we have $s\le u_{q}$, which
gives $s\le\sigma$ for $q\to+\infty$.
\end{proof}
\noindent To solve problem \ref{enu:AUB}, assume that $u\in\rti$
is an upper bound of a non empty $S\subseteq\rti$. Let $[u_{\eps}]=u$,
and for all $s\in S$ choose a representative $[s_{\eps}(u)]=s$ such
that
\begin{equation}
\forall\eps\in I:\ s_{\eps}(u)\le u_{\eps}.\label{eq:reprForallEps}
\end{equation}
We first note that setting
\begin{equation}
\sigma_{\eps}:=\sup\left\{ s_{\eps}(u)\mid s\in S\right\} \quad\forall\eps\in I\label{eq:defSigmaSup}
\end{equation}
does not work to define a representative of the supremum, e.g.~if
$S=(0,1)$. Assume, e.g., that $u_{\eps}=3$ and take any sequence
$(s_{n})_{n\in\N}$ of different points of $S$: $s_{i}\ne s_{j}$
if $i\ne j$. Change representatives of $s_{n}=[s_{n\eps}]$ satisfying
\eqref{eq:reprForallEps} by setting $\bar{s}_{n\eps}:=s_{n\eps}(u)=s_{n\eps}(3)$
if $\eps\ne\frac{1}{n}$ and $\bar{s}_{n,\frac{1}{n}}:=3$. These
new representatives still satisfy \eqref{eq:reprForallEps}, but defining
$\sigma_{\eps}$ with them as in \eqref{eq:defSigmaSup}, we would
get $\sigma_{\frac{1}{n}}\ge\sup\left\{ \bar{s}_{n,\frac{1}{n}}\mid n\in\N_{>0}\right\} =3$,
and hence $[\sigma_{\eps}]\ne1=\sup(S)$. We want to refine this idea
by considering suitable representatives $\left[s_{\eps}(u)\right]=s$
satisfying \eqref{eq:reprForallEps}, and setting
\begin{align}
\sigma_{\eps}(S) & :=\sigma_{\eps}:=\inf\left\{ \sup\left\{ s_{\eps}(u)\mid s\in S\right\} \mid u\ge S\right\} \quad\forall\eps\in I,\label{eq:defSigma}\\
(\sigma_{\eps}(S)) & \in\R_{\rho}\ \Rightarrow\ \sigma(S):=\left[\sigma_{\eps}(S)\right]\in\rti,
\end{align}
where $u\ge S$ means that $u$ is an upper bound of $S$, and where
the representatives are chosen as follows: set $\R_{\infty}:=\R\cup\{+\infty\}$,
and for all $(u_{\eps})\in\R_{\infty}^{I}$ and $s\in S$:
\begin{equation}
\left\{ \begin{array}{l}
s\le[u_{\eps}]\in\rti\ \Rightarrow\ \exists\left[s_{\eps}(u)\right]=s\,\forall\eps\in I:\ s_{\eps}(u)\le u_{\eps}\\
(u_{\eps})\notin\R_{\rho}\text{ or }s\not\le[u_{\eps}]\ \Rightarrow\ \left[s_{\eps}(u)\right]=s\text{ is any representative of }s.
\end{array}\right.\label{eq:s_epsLeu_eps}
\end{equation}
Note that definition \eqref{eq:defSigma} depends on the chosen representatives
$(s_{\eps}(u))$ for $s\in S$ and $\left(u_{\eps}\right)$ for $u\ge S$;
trivially, if $\left(\bar{\sigma}_{\eps}(S)\right)$ is defined using
different representatives $(\bar{s}_{\eps}(u))$ and $(\bar{u}_{\eps})$,
and both $\left(\bar{\sigma}_{\eps}(S)\right)$ and $\left(\sigma_{\eps}(S)\right)$
well-define the supremum $\sup(S)$ (or the least upper bound $\text{lub}(S)$)
of $S$, then $\left[\bar{\sigma}_{\eps}(S)\right]=\left[\sigma_{\eps}(S)\right]$.
On the other hand, if we calculate $\left(\sigma_{\eps}(S)\right)$
using a certain choice of representatives, and we notice that $\left(\sigma_{\eps}(S)\right)$
is not an upper bound of $S$, we do not know whether another choice
of representatives can give an upper bound or not. This is one of
the weaknesses of the present solution. To highlight this dependence,
we will also sometimes use the following notations for our choice
functions (their existence depends on the axiom of choice):
\begin{align}
\text{e}(s,u,\eps) & :=s_{\eps}(u)\quad\forall s\in S\,\forall u\ge S\nonumber \\
\text{b}(u,\eps) & :=u_{\eps}\quad\forall u\ge S.\label{eq:choiceFncts}
\end{align}

We first observe that, for all $\eps\in I$:
\begin{align}
\nexists u & \ge S\ \Rightarrow\ \sigma_{\eps}=+\infty\nonumber \\
\exists u & \ge S\ \Rightarrow\ \sigma_{\eps}\le\sup\left\{ s_{\eps}(u)\mid s\in S\right\} \le u_{\eps}\label{eq:sigmaLEsupLEu}\\
S & =\emptyset\ \Rightarrow\ \sigma_{\eps}=\sup\left\{ s_{\eps}(u)\mid s\in S\right\} =-\infty.\nonumber 
\end{align}
We therefore have:
\begin{lem}
\label{lem:mainCondLUB}Assume that $S\subseteq\rti$, $(\sigma_{\eps}(S))\in\R_{\rho}$
and $\sigma(S)\ge S$. Then the following properties hold:
\begin{enumerate}
\item $\sigma(S)=\text{\emph{lub}}(S)$.
\item If $\text{\emph{b}}(\sigma(S),\eps)=\sigma_{\eps}(S)$, then $\sigma_{\eps}(S)=\sup\left\{ s_{\eps}(\sigma(S))\mid s\in S\right\} =\inf\left\{ u_{\eps}\mid u\ge S\right\} $
for all $\eps\in I$.
\end{enumerate}
\end{lem}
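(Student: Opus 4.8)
The plan is to prove both statements by working at the level of representatives, unwinding the definition \eqref{eq:defSigma} of $\sigma_\eps$ as an infimum (over upper bounds $u\ge S$) of suprema (over $s\in S$), and carefully tracking which representatives are produced by the choice functions $\text{e}$ and $\text{b}$ of \eqref{eq:choiceFncts}. The two structural facts I would exploit are: (i) by \eqref{eq:s_epsLeu_eps}, whenever $u\ge S$ and $s\in S$ the chosen representative satisfies $s_\eps(u)\le u_\eps$ for \emph{all} $\eps\in I$; and (ii) by \eqref{eq:sigmaLEsupLEu}, for every upper bound $u\ge S$ we have $\sigma_\eps\le\sup\{s_\eps(u)\mid s\in S\}\le u_\eps$. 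Since $\sigma(S)\ge S$ is assumed, $\sigma(S)$ is already an upper bound, so the content of (1) is only its minimality, while the content of (2) is the exactness of the two $\eps$-level formulas.

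For (1), I would take an arbitrary upper bound $v\ge S$ together with its chosen representative $v_\eps=\text{b}(v,\eps)$. For each $s\in S$, fact (i) gives $s_\eps(v)\le v_\eps$ for all $\eps$, hence $\sup\{s_\eps(v)\mid s\in S\}\le v_\eps$. Because $v$ is one of the upper bounds appearing in the infimum \eqref{eq:defSigma}, this yields $\sigma_\eps\le v_\eps$ for all $\eps$, and therefore $\sigma(S)=[\sigma_\eps]\le[v_\eps]=v$ by the definition of the order on $\rti$. As $v\ge S$ was arbitrary and $\sigma(S)$ is itself an upper bound, $\sigma(S)$ is the least upper bound.

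For (2), I would invoke the extra coherence hypothesis $\text{b}(\sigma(S),\eps)=\sigma_\eps$. For the first equality, note that each $s\in S$ satisfies $s\le\sigma(S)$, so fact (i) applied with $u=\sigma(S)$ together with the hypothesis gives $s_\eps(\sigma(S))\le\text{b}(\sigma(S),\eps)=\sigma_\eps$; taking the supremum over $s$ yields $\sup\{s_\eps(\sigma(S))\mid s\in S\}\le\sigma_\eps$, while the reverse inequality is exactly the instance of fact (ii) with the legitimate upper bound $u=\sigma(S)$. For the second equality, fact (ii) shows $\sigma_\eps\le u_\eps$ for every $u\ge S$, hence $\sigma_\eps\le\inf\{u_\eps\mid u\ge S\}$; conversely, $\sigma(S)$ is itself an upper bound whose chosen representative is $\sigma_\eps$, so $\sigma_\eps$ occurs among the $u_\eps$ and $\inf\{u_\eps\mid u\ge S\}\le\sigma_\eps$.

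The only real delicacy, rather than a deep obstacle, is that the net $(\sigma_\eps)$ and the two extremal formulas depend on the axiom-of-choice selections $\text{e}$ and $\text{b}$ of \eqref{eq:choiceFncts}, so I must be scrupulous about which representative enters each inequality and verify that all the $\eps$-level estimates hold simultaneously; it is crucial that fact (i) holds for all $\eps$, not merely for $\eps$ small, which is what lets me pass cleanly to the order relation on $\rti$. Both hypotheses are doing essential work: without $\sigma(S)\ge S$ the net $[\sigma_\eps]$ need not be an upper bound at all (the ``$\eps$-hole'' phenomenon of Example \ref{exa:supInf}.\ref{enu:01+dynamicPt}), and without the coherence condition $\text{b}(\sigma(S),\eps)=\sigma_\eps$ the two representative-level identities in (2) may fail even when (1) holds.
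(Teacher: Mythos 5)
Your proof is correct and follows essentially the same route as the paper's: both arguments reduce to the two representative-level facts you isolate, namely the chain $\sigma_{\eps}\le\sup\{s_{\eps}(u)\mid s\in S\}\le u_{\eps}$ of \eqref{eq:sigmaLEsupLEu} for minimality, and the coherence condition $\text{b}(\sigma(S),\eps)=\sigma_{\eps}(S)$ combined with \eqref{eq:s_epsLeu_eps} for the two exact $\eps$-level formulas. The only difference is that the paper cites \eqref{eq:sigmaLEsupLEu} directly where you re-derive it from the choice of representatives; the content is identical.
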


\begin{proof}
If $\sigma:=\sigma(S)\ge S$, inequality \eqref{eq:sigmaLEsupLEu}
shows that $\sigma$ is the least upper bound of $S.$ From \eqref{eq:defSigma}
and \eqref{eq:sigmaLEsupLEu}, we have $\sigma_{\eps}\le\sup\left\{ s_{\eps}(\sigma)\mid s\in S\right\} \le\sigma_{\eps}$
because $\sigma\ge S$ and $\text{b}(\sigma,\eps)=\sigma_{\eps}$
(i.e.~the chosen representative $\left(u_{\eps}\right)$ for the
upper bound $\sigma\ge S$ is exactly $(\sigma_{\eps})$ as defined
in \eqref{eq:defSigma}). Finally, the inequality $\sigma_{\eps}\le\inf\left\{ u_{\eps}\mid u\ge S\right\} $
follows from \eqref{eq:sigmaLEsupLEu}. The other inequality follows
from $\sigma=\sigma(S)\ge S$ and from $\text{b}(\sigma,\eps)=\sigma_{\eps}$.
\end{proof}
\noindent In general, the net $(\sigma_{\eps}(S))$ is not $\rho$-moderate.
In fact, if $(u_{n})_{n\in\N}$ is a sequence of different upper bounds
and we set $s_{n,\frac{1}{n}}(u_{n})=-\rho_{\frac{1}{n}}^{-1/n}$,
this yields $\sigma_{\frac{1}{n}}\le-\rho_{\frac{1}{n}}^{-1/n}$.
On the other hand, we have:
\begin{lem}
\label{lem:boundedImpliesMod}Let $u\in\rti$, $S\subseteq\rti$ with
$S\le u$. Assume that for some $\bar{s}\in S$ we have
\begin{equation}
\forall^{0}\eps:\ \sigma_{\eps}(S)\ge\bar{s}_{\eps}(u).\label{eq:aboveOne}
\end{equation}
Then $(\sigma_{\eps}(S))\in\R_{\rho}$ and $\bar{s}\le\sigma(S)\le u$.
\end{lem}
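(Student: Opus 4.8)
The plan is to establish a two-sided bound on the net $(\sigma_\eps(S))$ of the form $\bar s_\eps(u)\le\sigma_\eps(S)\le u_\eps$ for $\eps$ small, and then to read off both moderateness and the two required inequalities from it. The upper bound is already at hand: since $S\le u$, the set $S$ admits an upper bound, so the middle case of \eqref{eq:sigmaLEsupLEu} applies and gives $\sigma_\eps(S)\le\sup\{s_\eps(u)\mid s\in S\}\le u_\eps$ for all $\eps\in I$. The lower bound is precisely the standing hypothesis \eqref{eq:aboveOne}. Thus for $\eps$ small we have $\bar s_\eps(u)\le\sigma_\eps(S)\le u_\eps$.

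Next I would exploit that the two bounding nets are representatives of genuine elements of $\rti$, hence $\rho$-moderate. Indeed, $(u_\eps)$ is the chosen representative of $u\in\rti$, and $(\bar s_\eps(u))=(\text{e}(\bar s,u,\eps))$ is, by the first case of \eqref{eq:s_epsLeu_eps} (applicable since $\bar s\le u$), a representative of $\bar s\in\rti$ satisfying $\bar s_\eps(u)\le u_\eps$ for all $\eps$. Picking $N\in\N$ with $|\bar s_\eps(u)|\le\rho_\eps^{-N}$ and $|u_\eps|\le\rho_\eps^{-N}$ for $\eps$ small, the elementary observation that $a\le c\le b$ forces $|c|\le\max(|a|,|b|)$ yields $|\sigma_\eps(S)|\le\rho_\eps^{-N}$ for $\eps$ small. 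Hence $(\sigma_\eps(S))\in\R_\rho$, which is the first assertion, and therefore $\sigma(S):=[\sigma_\eps(S)]\in\rti$ is well defined.

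Finally I would pass the pointwise inequalities to the quotient. From $\sigma_\eps(S)\le u_\eps$ (valid for all $\eps$, with these nets representing $\sigma(S)$ and $u$) we obtain $\sigma(S)\le u$, and from $\bar s_\eps(u)\le\sigma_\eps(S)$ for $\eps$ small, together with the fact that $(\bar s_\eps(u))$ represents $\bar s$, we obtain $\bar s\le\sigma(S)$; combining these gives $\bar s\le\sigma(S)\le u$. I do not expect any serious obstacle in this argument; the only point requiring genuine care is the bookkeeping of the choice functions, namely verifying that the representative $\bar s_\eps(u)$ appearing in \eqref{eq:aboveOne} is exactly the one forced by \eqref{eq:s_epsLeu_eps} to lie below $u_\eps$, so that the lower and upper bounds refer to mutually compatible representatives. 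Once this consistency is checked, the sandwich closes and both claims follow at once.
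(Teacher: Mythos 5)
Your proof is correct and follows essentially the same route as the paper's (much terser) argument: the upper bound $\sigma_\eps(S)\le u_\eps$ from \eqref{eq:sigmaLEsupLEu}, the lower bound from hypothesis \eqref{eq:aboveOne}, and then moderateness plus the two inequalities read off from the sandwich between representatives of $\bar{s}$ and $u$. Your elaboration of the bookkeeping of the choice functions in \eqref{eq:s_epsLeu_eps} is a useful expansion of what the paper leaves implicit, but it is not a different approach.
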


\begin{proof}
From \eqref{eq:sigmaLEsupLEu}, we get $\sigma_{\eps}\le u_{\eps}$.
The conclusion thus follows from \eqref{eq:aboveOne} and $\bar{s}$,
$u\in\rti$.
\end{proof}
\noindent Since the set of all infinitesimals $S=D_{\infty}$ has
no least upper bound, the previous two results imply that $\sigma(D_{\infty})\not\ge D_{\infty}$.
Using Lem.~\ref{lem:boundedImpliesMod} with $l=-r$, $u=r\in\R_{>0}$,
we have that $\sigma(D_{\infty})$ is always an infinitesimal (that
actually depends on the chosen representatives $\left(s_{\eps}(u)\right)$
and $(u_{\eps})$).

The following condition solves problem \ref{enu:AUB}:
\begin{defn}
\label{def:complAbove}Let $S\subseteq\rcrho$ and for simplicity
use $\sigma_{\eps}=\sigma_{\eps}(S)$, then we say that $S$ \emph{is
complete from above }if the following conditions hold:
\begin{enumerate}
\item \label{enu:sigmaUB}$\forall s\in S\,\exists[s_{\eps}]=s\,\forall^{0}\eps:\ s_{\eps}\le\sigma_{\eps}$.
\item \label{enu:CFA-diag}If $(s^{e})_{e\in I}$ is a family of $S$ which
satisfies:
\begin{equation}
\exists[u_{\eps}]\in\rti\,\forall e\in I\,\forall^{0}\eps:\ s_{\eps}^{e}(\sigma)\le u_{\eps}\label{eq:boundFam}
\end{equation}
then
\begin{equation}
\exists[\bar{s}_{\eps}]\in\overline{S}\,\forall^{0}\eps:\ s_{\eps}^{\eps}(\sigma)\le\bar{s}_{\eps},\label{eq:complAbove}
\end{equation}
where $\overline{S}$ is the closure of $S$ in the sharp topology.
\end{enumerate}
Moreover, if $\exists s\in S:\ s>0$, then we say that $M$ \emph{is
an Archimedean upper bound (AUB) of $S$} if
\begin{enumerate}[label=(\alph*)]
\item $M\in\rcrho$ and $\forall s\in S:\ s\le M$;
\item $\exists n\in\N\,\exists\bar{s}\in S:\ M<n\bar{s}$. The minimum $n\in\N$
that satisfies this property is called the \emph{order of }$M$ (clearly,
$n\ge2$). Note that this condition, using an Archimedean-like property,
formalizes the idea that $M$ and $\bar{s}$ are of the same order
of magnitude.
\end{enumerate}
Dually, we can define the notion of \emph{completeness from below}
by reverting all the inequalities in \ref{enu:sigmaUB} and \ref{enu:CFA-diag}.
If $\exists s\in S:\ s<0$, then $N$ is an \emph{Archimedean lower
bound (ALB) of $S$} if it is a lower bound such that $\exists n\in\N\,\exists\bar{s}\in S:\ \bar{s}n<N$.
\end{defn}

\noindent Note that $\sigma=\sup(S)$ is always an AUB of order 2.
In fact, from the existence of $s\in S_{>0}$, we have $s>\diff{\rho}^{q}$
for some $q\in\N$ and the existence of $\bar{s}\in S$ with $\bar{s}\ge\sigma-\diff{\rho}^{q+1}$.
Thereby, $\bar{s}\ge s-\diff{\rho}^{q+1}>\diff{\rho}^{q}-\diff{\rho}^{q+1}>\diff{\rho}^{q+1}$
and thus $\sigma\le\bar{s}+\diff{\rho}^{q+1}<2\bar{s}$. We also note
that $S=\rti$ is trivially complete from above (because $\sigma_{\eps}=+\infty$
from \eqref{eq:sigmaLEsupLEu}, and by setting $\bar{s}_{\eps}=u_{\eps})$
but $\nexists\sup(\rti)$. Looking at Lem.~\ref{lem:boundedImpliesMod},
in the case of a non empty subset $S\subseteq\rti$ bounded from above,
the condition of being complete from above can be intuitively described
as follows:
\begin{enumerate}[label=(\alph*)]
\item \label{enu:Choose-representatives}Choose representatives $[u_{\eps}]=u$
for each $u\ge S$ and $[s_{\eps}(u)]=s$ for each $s\in S$ satisfying
\eqref{eq:s_epsLeu_eps};
\item Define $\sigma_{\eps}(S)=:\sigma_{\eps}\in\R_{\infty}=\R\cup\{+\infty\}$
as in \ref{eq:defSigma}.
\item Check if the inequality $s_{\eps}(\sigma)\le\sigma_{\eps}$ holds
(in this case, for the chosen representatives satisfying \eqref{eq:s_epsLeu_eps},
without loss of generality, we can assume that $\text{b}(\sigma,\eps)=\sigma_{\eps}$
for all $\eps\in I$);
\item From \emph{any} family $(s^{e})_{e\in I}$ of $S$ (which is therefore
bounded from above, so that \eqref{eq:boundFam} always holds) pick
the diagonal net $(s_{\eps}^{\eps}(\sigma))$ from its representatives
(depending on $\sigma\ge S$) and check if $s_{\eps}^{\eps}(\sigma)\le\bar{s}_{\eps}$
for some $\bar{s}$ in the sharp closure $\overline{S}$.
\item If any of the two previous steps do not hold, consider a different
set of representatives in the first step \ref{enu:Choose-representatives}.
\end{enumerate}
We therefore have the following simplified case:
\begin{lem}
\label{lem:CFAboundedSets}Assume that $\emptyset\ne S\subseteq\rcrho$
is sharply bounded from above, then $S$ is complete from above if
and only if the following condition holds
\begin{enumerate}
\item $\sigma(S)=:\sigma\ge S$
\item If $(s^{e})_{e\in I}$ is a family of $S$, then $\exists[\bar{s}_{\eps}]\in\overline{S}\,\forall^{0}\eps:\ s_{\eps}^{\eps}(\sigma)\le\bar{s}_{\eps}$.
\end{enumerate}
\end{lem}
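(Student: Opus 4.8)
The plan is to match the two defining clauses of completeness from above, namely Def.~\ref{def:complAbove}.\ref{enu:sigmaUB} and Def.~\ref{def:complAbove}.\ref{enu:CFA-diag}, against the two conditions of the statement, using the boundedness hypothesis only to dispose of the two technical provisos that separate them: the implicit moderacy of $(\sigma_\eps)$ in the first clause, and the boundedness premise \eqref{eq:boundFam} in the second. Concretely, I would first show that for nonempty $S$ bounded from above, Def.~\ref{def:complAbove}.\ref{enu:sigmaUB} is equivalent to condition~(1), $\sigma(S)\ge S$; and then show that once condition~(1) holds, the hypothesis \eqref{eq:boundFam} of Def.~\ref{def:complAbove}.\ref{enu:CFA-diag} is automatically met by every family of $S$, so that Def.~\ref{def:complAbove}.\ref{enu:CFA-diag} collapses to condition~(2).

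For the first equivalence, the direction from condition~(1) to Def.~\ref{def:complAbove}.\ref{enu:sigmaUB} is immediate: if $\sigma(S)=[\sigma_\eps]\ge S$, then each $s\in S$ satisfies $s\le[\sigma_\eps]$, and by the characterization of $\le$ recalled after Def.~\ref{def:RCGN} there is a representative $[s_\eps]=s$ with $s_\eps\le\sigma_\eps$ for $\eps$ small. For the converse I would use boundedness to force moderacy of $(\sigma_\eps)$: fixing any $\bar s\in S$ (possible since $S\ne\emptyset$), Def.~\ref{def:complAbove}.\ref{enu:sigmaUB} gives a representative with $\bar s_\eps\le\sigma_\eps$ for $\eps$ small, while \eqref{eq:sigmaLEsupLEu} applied to an upper bound $u$ of $S$ gives $\sigma_\eps\le u_\eps$. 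The two-sided moderate estimate $\bar s_\eps\le\sigma_\eps\le u_\eps$ then forces $(\sigma_\eps)\in\R_{\rho}$; this is essentially Lem.~\ref{lem:boundedImpliesMod}, the only care needed being that the representative furnished by Def.~\ref{def:complAbove}.\ref{enu:sigmaUB} and the $u$-relative representative $\bar s_\eps(u)$ differ by a negligible net. Once $(\sigma_\eps)$ is moderate, Def.~\ref{def:complAbove}.\ref{enu:sigmaUB} reads $s\le[\sigma_\eps]=\sigma(S)$ for all $s\in S$, i.e.~condition~(1).

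For the second equivalence, assume condition~(1), so that $\sigma:=\sigma(S)=[\sigma_\eps]\in\rti$ is an upper bound; as in the discussion after Def.~\ref{def:complAbove} I take $\text{b}(\sigma,\eps)=\sigma_\eps$. Then for any family $(s^e)_{e\in I}$ of $S$ and any $e$, since $s^e\le\sigma=[\sigma_\eps]$ with $(\sigma_\eps)\in\R_{\rho}$, the representative-choice convention \eqref{eq:s_epsLeu_eps} forces $s^e_\eps(\sigma)\le\sigma_\eps$ for every $\eps\in I$; taking $[u_\eps]:=[\sigma_\eps]\in\rti$ this is exactly \eqref{eq:boundFam}. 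Hence the premise of Def.~\ref{def:complAbove}.\ref{enu:CFA-diag} is satisfied by every family, so that clause is equivalent to asserting its conclusion \eqref{eq:complAbove} for every family, which is condition~(2). Conversely, condition~(2) is formally stronger than Def.~\ref{def:complAbove}.\ref{enu:CFA-diag}, since it drops the hypothesis \eqref{eq:boundFam}, and so trivially implies it. Assembling the two equivalences in each direction yields the claim.

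The only genuine obstacle is the moderacy step in the first equivalence. In general $(\sigma_\eps)$ need not be $\rho$-moderate, as the counterexample preceding Lem.~\ref{lem:boundedImpliesMod} shows (the values $\sigma_{1/n}$ can be pushed down to $-\rho_{1/n}^{-1/n}$), and it is precisely the two-sided moderate bound coming jointly from $S\ne\emptyset$ and from boundedness from above that rescues it; this is exactly where the hypothesis of the lemma is used. Everything else is bookkeeping with the choice functions $\text{e}(\cdot,\cdot,\cdot)$ and $\text{b}(\cdot,\cdot)$ of \eqref{eq:choiceFncts} and with the definition of $\le$.
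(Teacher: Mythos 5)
Your proposal is correct and follows exactly the route the paper intends: the lemma is stated as an immediate ``simplified case'' of the preceding discussion, and your argument formalizes it by using Lem.~\ref{lem:boundedImpliesMod} together with \eqref{eq:sigmaLEsupLEu} to recover the moderacy of $(\sigma_{\eps}(S))$ from $S\ne\emptyset$ and sharp boundedness, and by observing that once $\sigma(S)\ge S$ the convention \eqref{eq:s_epsLeu_eps} makes \eqref{eq:boundFam} automatic, so Def.~\ref{def:complAbove}.\ref{enu:CFA-diag} reduces to condition (2). You also correctly isolate the only delicate point (the negligible discrepancy between the representative from Def.~\ref{def:complAbove}.\ref{enu:sigmaUB} and the $u$-relative one), so nothing is missing.
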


\noindent Note that example \ref{exa:supInf}.\ref{enu:01+dynamicPt}
satisfies the first one of these conditions (so that $\sigma(S)$
is its least upper bound) but not the second one because it does not
admit supremum (see the following theorem). Cases which remain excluded
from the previous lemma are e.g.~intervals $(a,+\infty)$, with $-\infty\le a\in\rti$
which are complete from above even if they do not admit supremum nor
least upper bound.

\noindent The following results solve the remaining problems \ref{enu:LUPSup}
and \ref{enu:AUB} we set at the beginning of this section.
\begin{thm}
\label{thm:seqSup}Assume that $\emptyset\ne S\subseteq\rcrho$, then
\begin{enumerate}
\item \label{enu:SupExists}If $S$ is complete and bounded from above and
$\text{\emph{b}}(\sigma(S),\eps)=\sigma_{\eps}(S)$, then $\exists\sup(S)=\sigma(S)$;
\end{enumerate}
Let $(s_{q})_{q\in\N}$ and $(u_{q})_{q\in\N}$ be two sequences as
in Thm.~\ref{thm:hans}, then
\begin{enumerate}[resume]
\item \label{enu:seqSupAUB}If $\exists s\in S:\ s>0$, and if there exists
$C\in\R_{>0}$ such that $s_{q}\ge C\diff{\rho}^{q}$ for all $q\in\N$
large, then $u_{q}$ is an AUB of $S$ for all $q$ sufficiently large;
\item \label{enu:seqSupAUB2}If $\exists s\in S:\ s>0$, then $u_{q}$ is
an AUB of $S$ of order $2$ for all $q$ sufficiently large.
\end{enumerate}
\begin{proof}
\ref{enu:SupExists}: From Lem.~\ref{lem:boundedImpliesMod} we get
that $\sigma(S)=:\sigma$ is well-defined because $\sigma\ge S$ by
definition of completeness from above, i.e.~Def.~\ref{def:complAbove}.\ref{enu:sigmaUB}.
Therefore, Lem.~\ref{lem:mainCondLUB} and the assumption $\text{b}(\sigma(S),\eps)=\sigma_{\eps}(S)$,
yield that $\sigma_{\eps}=\sup\left\{ s_{\eps}(\sigma)\mid s\in S\right\} $
for all $\eps$. For arbitrary $q\in\N$ and $e\in I$, this yields
\begin{equation}
\sigma_{e}-\rho_{e}^{q+2}<s_{e}^{e}(\sigma)=:s_{e}^{e}\label{eq:sqe}
\end{equation}
for some $s^{e}\in S$ (that depends on both $q$ and $e$). By definition
of completeness from above, we get the existence of $\bar{s}=[\bar{s}_{\eps}]\in\overline{S}$
such that $s_{\eps}^{\eps}\le\bar{s}_{\eps}$ for $\eps$ mall. Setting
$e=\eps$ in \eqref{eq:sqe}, we get $\sigma_{\eps}-\rho_{\eps}^{q+2}<s_{\eps}^{\eps}\le\bar{s}_{\eps}$
for $\eps$ small, i.e.~$\sigma-\diff{\rho}^{q+2}<\bar{s}$. Since
$\bar{s}\in\overline{S}$, there exists $s\in S\cap(\bar{s}-\diff{\rho}^{q+1},\bar{s}+\diff{\rho}^{q+1})$.
Thereby, $\sigma-\diff{\rho}^{q}+\diff{\rho}^{q+1}<\sigma-\diff{\rho}^{q+2}<\bar{s}$,
and hence $\sigma-\diff{\rho}^{q}<\bar{s}-\diff{\rho}^{q+1}<s$, which
proves our claim \ref{enu:SupExists}.

Now, assume that $s_{q}\ge C\diff{\rho}^{q}$ for some $C\in\R_{>0}$
and for all $q\in\N$ sufficiently large. Then, for these $q$ we
have $\frac{s_{q}+\diff{\rho}^{q}}{s_{q}}\le1+\frac{1}{C}\le\left\lceil 1+\frac{1}{C}\right\rceil =:n\in\N$.
This yields $u_{q}<s_{q}+\diff{\rho}^{q}<ns_{q}$, i.e.~$u_{q}$
is an AUB of $S$. Finally, from the existence of at least one $s\in S_{>0}$,
we get the existence of $p\in\N$ such that $s>\diff{\rho}^{p}$.
Therefore, also $\diff{\rho}^{p}<s\le\sigma$. From \ref{enu:SupExists},
we hence get that for $q\in\N$ sufficiently large $\diff{\rho}^{p}<s_{q}\le\sigma$,
i.e.~$\frac{1}{s_{q}}<\diff{\rho}^{-p}$ and $\frac{s_{q}+\diff{\rho}^{q}}{s_{q}}\le1+\diff{\rho}^{q-p}\le2$
for all $q>p$. Proceeding as above we can prove the claim.
\end{proof}
\end{thm}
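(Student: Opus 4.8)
The plan is to handle the three items separately, since each leans on a different earlier result, and to concentrate the real work on item~\ref{enu:SupExists}, where the content lies. For item~\ref{enu:SupExists} I would first make sure that $\sigma:=\sigma(S)$ is a genuine, well-defined upper bound in $\rti$. Completeness from above supplies, through Def.~\ref{def:complAbove}.\ref{enu:sigmaUB}, the relation $\sigma\ge S$; together with sharp boundedness from above this lets me apply Lem.~\ref{lem:boundedImpliesMod} to conclude $(\sigma_\eps(S))\in\R_\rho$, so that $\sigma\in\rti$. I would then feed the hypothesis $\text{b}(\sigma(S),\eps)=\sigma_\eps(S)$ into Lem.~\ref{lem:mainCondLUB} to obtain the representative-level identity $\sigma_\eps=\sup\{s_\eps(\sigma)\mid s\in S\}$ for every $\eps$. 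Since $\sigma$ is already an upper bound, it then suffices to verify the close-supremum criterion \eqref{eq:2defSup}.

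The crux, and the step I expect to be the main obstacle, is the diagonalization across the index set. For a fixed $q\in\N$ and each index $e$, the identity above yields some $s^e\in S$ with $\sigma_e-\rho_e^{q+2}<s_e^e(\sigma)$; the family $(s^e)_e$ is automatically bounded above by $\sigma$, so its defining hypothesis \eqref{eq:boundFam} holds and the diagonal clause Def.~\ref{def:complAbove}.\ref{enu:CFA-diag} produces $\bar s=[\bar s_\eps]\in\overline{S}$ with $s_\eps^\eps(\sigma)\le\bar s_\eps$ for $\eps$ small. Specializing $e=\eps$ collapses the two estimates into $\sigma-\diff{\rho}^{q+2}<\bar s$ in $\rti$. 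The delicate point is that $\bar s$ lives only in the sharp closure $\overline{S}$, not in $S$ itself; so I would use the definition of closure to select an actual $s\in S\cap(\bar s-\diff{\rho}^{q+1},\bar s+\diff{\rho}^{q+1})$ and absorb the two infinitesimal gaps, exploiting $\diff{\rho}^{q+1}+\diff{\rho}^{q+2}<\diff{\rho}^{q}$ (valid since $\diff{\rho}$ is infinitesimal), to arrive at $\sigma-\diff{\rho}^{q}<s$. As $q$ was arbitrary, this is exactly \eqref{eq:2defSup}, whence $\sigma=\sup(S)$.

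Items~\ref{enu:seqSupAUB} and~\ref{enu:seqSupAUB2} reduce to short estimates on the sequences furnished by Thm.~\ref{thm:hans}; note that the mere existence of $(s_q)$, $(u_q)$ satisfying \eqref{eq:hans} already forces $\sup(S)=:\sigma$ to exist and both sequences to converge to it, with each $u_q$ an upper bound. For item~\ref{enu:seqSupAUB}, from $u_q-s_q\le\diff{\rho}^q$ and $s_q\ge C\diff{\rho}^q$ I would divide by $s_q>0$ to get $u_q/s_q\le 1+\frac1C$, set $n:=\lceil 1+\frac1C\rceil\in\N$, and read off $u_q<n s_q$ with witness $\bar s=s_q$, so $u_q$ is an AUB. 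For item~\ref{enu:seqSupAUB2}, positivity of some $s\in S$ gives, via Lem.~\ref{lem:mayer}, an exponent $p$ with $s>\diff{\rho}^p$; convergence $s_q\to\sigma\ge s$ then yields $\diff{\rho}^p<s_q\le\sigma$ for $q$ large, hence $1/s_q<\diff{\rho}^{-p}$ and $u_q/s_q\le 1+\diff{\rho}^{q-p}\le 2$ for $q>p$, so $u_q$ is an AUB of order $2$ (the minimum possible, since the order is always $\ge 2$).

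The only care needed in these last two items is the moderate-net bookkeeping that legitimizes taking the reciprocal $1/s_q$ and upgrading the estimates to the strict inequalities $u_q<n s_q$ demanded by the definition of an Archimedean upper bound; this is routine once one observes that $s_q$ is positive invertible for $q$ large. Thus the genuine difficulty is confined to the diagonal-closure argument of item~\ref{enu:SupExists}.
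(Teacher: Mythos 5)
Your proposal is correct and follows essentially the same route as the paper's own proof: well-definedness of $\sigma(S)$ via Lem.~\ref{lem:boundedImpliesMod} and Def.~\ref{def:complAbove}.\ref{enu:sigmaUB}, the representative-level identity from Lem.~\ref{lem:mainCondLUB}, the diagonal family $(s^{e})_{e}$ fed into Def.~\ref{def:complAbove}.\ref{enu:CFA-diag}, the passage from $\bar{s}\in\overline{S}$ back into $S$ by absorbing $\diff{\rho}^{q+1}+\diff{\rho}^{q+2}<\diff{\rho}^{q}$, and the same two short estimates for the AUB claims. No substantive differences to report.
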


\noindent Example \ref{exa:supInf}.\ref{enu:01+dynamicPt} shows
the necessity of the assumption of completeness from above in this
theorem.

Directly from Thm.~\ref{thm:seqSup}.\ref{enu:SupExists}, we obtain:
\begin{cor}
\label{cor:existenceSupCFA}Let $\emptyset\ne S\subseteq\rcrho$.
Assume that $S$ is complete from above and $\text{\emph{b}}(\sigma(S),\eps)=\sigma_{\eps}(S)$,
then $\exists\sup(S)$ if and only if $S$ admits an upper bound.
\end{cor}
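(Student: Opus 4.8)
The plan is to derive Corollary~\ref{cor:existenceSupCFA} from Theorem~\ref{thm:seqSup}.\ref{enu:SupExists} by handling the two directions of the biconditional separately. The forward direction is immediate: if $\exists\sup(S)=\sigma$, then by Def.~\ref{def:topSup}.\ref{enu:1defSup} the element $\sigma$ is itself an upper bound of $S$, so $S$ admits an upper bound and there is nothing further to prove.

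The substantive direction is the converse. Here I would assume that $S$ admits some upper bound, i.e.~$S$ is bounded from above in the sense that there exists $u\in\rcrho$ with $s\le u$ for all $s\in S$, and I would combine this with the two standing hypotheses of the corollary, namely that $S$ is complete from above and that $\text{b}(\sigma(S),\eps)=\sigma_{\eps}(S)$. My aim is to verify the hypotheses of Theorem~\ref{thm:seqSup}.\ref{enu:SupExists} verbatim, whose conclusion is exactly $\exists\sup(S)=\sigma(S)$. That theorem requires precisely three things: that $S$ be complete from above, that $S$ be bounded from above, and that $\text{b}(\sigma(S),\eps)=\sigma_{\eps}(S)$. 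Two of these are hypotheses of the corollary, and the third is the assumption we have just made in this direction. Thus the conclusion follows directly.

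The one point that requires a moment of care is reconciling the word \emph{bounded} as used in Theorem~\ref{thm:seqSup} with the phrase \emph{admits an upper bound} appearing in the corollary; I would note that these coincide, since an upper bound $u\ge S$ is exactly what makes the defining net $(\sigma_{\eps}(S))$ in \eqref{eq:defSigma} take finite values via \eqref{eq:sigmaLEsupLEu}, so that $(\sigma_{\eps}(S))\in\R_{\rho}$ and $\sigma(S)\in\rti$ is well-defined. In fact, once $S$ is complete from above, Def.~\ref{def:complAbove}.\ref{enu:sigmaUB} already forces $\sigma(S)\ge S$, and together with the existence of a genuine upper bound $u$ one invokes Lem.~\ref{lem:boundedImpliesMod} (or directly \eqref{eq:sigmaLEsupLEu}) to guarantee moderateness of $(\sigma_{\eps}(S))$.

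I do not expect any serious obstacle here, since the corollary is explicitly flagged as following \emph{directly} from Thm.~\ref{thm:seqSup}.\ref{enu:SupExists}; the only thing to be careful about is not to overstate the result. In particular one should not claim that completeness from above alone yields a supremum --- the interval $(a,+\infty)$ discussed after Lem.~\ref{lem:CFAboundedSets} is complete from above yet has no supremum precisely because it lacks an upper bound --- so the hypothesis that $S$ admits an upper bound is genuinely needed and must be stated as the condition characterizing the existence of $\sup(S)$. The proof is therefore essentially a citation of the earlier theorem with the biconditional's trivial direction noted separately.
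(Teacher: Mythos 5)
Your proposal is correct and follows exactly the route the paper intends: the forward direction is the trivial observation that a sharp supremum is an upper bound (Def.~\ref{def:topSup}.\ref{enu:1defSup}), and the converse is a direct application of Thm.~\ref{thm:seqSup}.\ref{enu:SupExists}, whose hypotheses are precisely the corollary's assumptions plus the existence of an upper bound. Your added remarks on the moderateness of $(\sigma_{\eps}(S))$ via Lem.~\ref{lem:boundedImpliesMod} and on the necessity of the upper-bound hypothesis (the interval $(a,+\infty)$) are accurate and only make explicit what the paper leaves implicit.
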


Now, we can also complete the relationships between close supremum
and least upper bound (see also \cite[Prop.~1.4]{GarVer11}) and study
what happens if we consider only the upper bounds $u$ lower than
a fixed upper bound $\bar{u}$ in \eqref{eq:defSigma}.
\begin{cor}
\label{cor:supLUB}Let $\emptyset\ne S\subseteq\rcrho$, then the
following properties hold:
\begin{enumerate}
\item \label{enu:supLUB}If $\exists\sup(S)=\sigma$, then $\exists\text{\emph{lub}}(S)=\sigma$.
\item \label{enu:sup=00003DLUB}If $S$ is complete and bounded from above,
then
\[
\exists\sup(S)=\sigma\ \iff\ \exists\,\text{\emph{lub}}(S)=\sigma.
\]
\item \label{enu:lessUB}Assume that $\bar{u}\ge S$ and define $\bar{\sigma}_{\eps}(S):=\inf\left\{ \sup\left\{ s_{\eps}(u)\mid s\in S\right\} \mid\bar{u}\ge u\ge S\right\} $.
Then $\bar{\sigma}(S):=\left[\bar{\sigma}_{\eps}(S)\right]$ is well-defined
and $\bar{\sigma}(S)\ge\sigma(S)$. If $\sigma(S)\ge S$, then $\bar{\sigma}(S)=\sigma(S)$.
If $\bar{\sigma}(S)\ge S$, then $\bar{\sigma}(S)$ is the least upper
bound of $S$, thus $\bar{\sigma}(S)=\sigma(S)$ if $S$ is complete
from above.
\item \label{enu:SupImpliesCFA}Assume that $\exists\,\sigma(S)\ge S$,
$\text{\emph{b}}(\sigma(S),\eps)=\sigma_{\eps}(S)$ and $\exists\sup(S)$.
Then $S$ is complete from above.
\end{enumerate}
\end{cor}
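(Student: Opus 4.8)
The four parts naturally split into two groups: \ref{enu:supLUB} and \ref{enu:sup=00003DLUB} relate close supremum to least upper bound, while \ref{enu:lessUB} and \ref{enu:SupImpliesCFA} concern the net $\sigma_{\eps}(S)$ and completeness from above. I would prove them roughly in the stated order, leaning heavily on the machinery already established. For \ref{enu:supLUB}, suppose $\sigma=\sup(S)$ in the close sense; I must show $\sigma$ is the \emph{least} upper bound, i.e.~that any upper bound $u\ge S$ satisfies $\sigma\le u$. The natural route is the subpoint trichotomy of Lem.~\ref{lem:negationsSubpoints}.\ref{enu:neg-le}: if $\sigma\not\le u$, then $\sigma>_{L}u$ on some $L\subzero I$, so $\sigma-u>_{L}\diff{\rho}^{q}$ for some $q$ by Lem.~\ref{lem:mayer} applied in $\rcrho|_{L}$. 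Then \eqref{eq:2defSup} furnishes $\bar{s}\in S$ with $\sigma-\diff{\rho}^{q+1}\le\bar{s}$, and since $\bar{s}\le u\le\sigma$ (as $u$ is an upper bound and $\sigma$ is one too, but we are testing on $L$), restricting to $L$ yields $\sigma-\diff{\rho}^{q+1}\le_{L}u<_{L}\sigma-\diff{\rho}^{q}$, a contradiction. Part \ref{enu:sup=00003DLUB} is then essentially immediate: the forward direction is \ref{enu:supLUB}, and the reverse direction is exactly the content of Thm.~\ref{thm:seqSup}.\ref{enu:SupExists} together with Cor.~\ref{cor:existenceSupCFA}, since completeness from above plus the existence of a least upper bound (hence an upper bound) guarantees the close supremum exists and, by uniqueness, equals $\sigma$.

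For \ref{enu:lessUB}, the key observation is that restricting the infimum in \eqref{eq:defSigma} to the smaller index set $\{u\mid\bar{u}\ge u\ge S\}$ can only \emph{increase} the value, giving $\bar{\sigma}(S)\ge\sigma(S)$ at the $\eps$-level and hence in $\rti$; well-definedness follows because $\bar{u}\ge\bar{\sigma}_{\eps}\ge\sigma_{\eps}$ sandwiches the net into $\R_\rho$ using $\bar{u}\in\rti$ and \eqref{eq:sigmaLEsupLEu}. If $\sigma(S)\ge S$, then $\sigma(S)$ is already the least upper bound by Lem.~\ref{lem:mainCondLUB}, and since $\bar{u}$ is an \emph{admissible} upper bound in the restricted infimum, we get $\bar{\sigma}(S)\le\sigma(S)$ as well, forcing equality. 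The remaining assertion --- if $\bar{\sigma}(S)\ge S$ then it is the least upper bound --- is Lem.~\ref{lem:mainCondLUB}.(i) applied verbatim to the restricted construction, and the final clause follows by combining with \ref{enu:sup=00003DLUB}.

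The main obstacle I anticipate is part \ref{enu:SupImpliesCFA}: one must verify \emph{both} defining conditions of completeness from above (Def.~\ref{def:complAbove}.\ref{enu:sigmaUB} and \ref{enu:CFA-diag}) from the hypotheses $\sigma(S)\ge S$, $\text{b}(\sigma(S),\eps)=\sigma_{\eps}(S)$, and $\exists\sup(S)$. Condition \ref{enu:sigmaUB} is the easy half: since $\sigma(S)\ge S$ is an upper bound with $\text{b}(\sigma(S),\eps)=\sigma_\eps$, the representatives $s_{\eps}(\sigma)$ chosen via \eqref{eq:s_epsLeu_eps} satisfy $s_{\eps}(\sigma)\le\sigma_{\eps}$ by construction, which gives \ref{enu:sigmaUB}. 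The genuinely delicate part is the diagonal condition \ref{enu:CFA-diag}: given any family $(s^{e})_{e\in I}$ bounded above as in \eqref{eq:boundFam}, I must produce $[\bar s_\eps]\in\overline{S}$ dominating the diagonal net $s_{\eps}^{\eps}(\sigma)$. Here I would exploit that $\sup(S)$ exists and equals $\sigma(S)$, so by Lem.~\ref{lem:mainCondLUB}.(ii) we have $\sigma_{\eps}=\sup\{s_{\eps}(\sigma)\mid s\in S\}$ for every $\eps$; thus each diagonal value satisfies $s_{\eps}^{\eps}(\sigma)\le\sigma_{\eps}$, and the candidate $\bar s:=\sigma=\sup(S)\in\overline S$ already works, since $\sup(S)$ is a closure point of $S$ and hence lies in $\overline S$. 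The subtlety to check carefully is that $\bar s_\eps=\sigma_\eps$ is a legitimate representative of a point of $\overline{S}$ and that the inequality $s_{\eps}^{\eps}(\sigma)\le\sigma_\eps$ holds for $\eps$ small (not merely pointwise with possible negligible corrections), which is precisely where the assumption $\text{b}(\sigma(S),\eps)=\sigma_{\eps}(S)$ is used to pin down the representative.
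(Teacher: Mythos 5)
Your overall route coincides with the paper's: part \ref{enu:supLUB} via the upper--bound test against \eqref{eq:2defSup}, part \ref{enu:sup=00003DLUB} from \ref{enu:supLUB} together with Cor.~\ref{cor:existenceSupCFA}, part \ref{enu:lessUB} by comparing the two infima, and part \ref{enu:SupImpliesCFA} by taking $\bar{s}:=\sigma=\sup(S)\in\overline{S}$ as the element dominating the diagonal net $s_{\eps}^{\eps}(\sigma)\le\sigma_{\eps}$ --- this last step is exactly the paper's argument. For \ref{enu:supLUB} the paper argues directly: $\sigma-\diff{\rho}^{q}\le s_{q}\le u$ for all $q$ forces $\sigma\le u$ as $q\to+\infty$; your detour through subpoints and Lem.~\ref{lem:mayer} also closes (the parenthetical ``$u\le\sigma$'' is neither needed nor justified, but the contradiction $\diff{\rho}^{q}<_{L}\diff{\rho}^{q+1}$ is obtained without it).

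The one step that does not work as written is the well--definedness of $\bar{\sigma}(S)$ in part \ref{enu:lessUB}. You propose to sandwich $\bar{\sigma}_{\eps}(S)$ between $\sigma_{\eps}(S)$ from below and $\bar{u}_{\eps}$ from above; but, as the paper observes explicitly just before Lem.~\ref{lem:boundedImpliesMod}, the net $(\sigma_{\eps}(S))$ is in general \emph{not} $\rho$-moderate, so bounding from below by $\sigma_{\eps}(S)$ gives no moderateness. The correct lower bound is $\sup\left\{ s_{\eps}(u)\mid s\in S\right\} \ge\bar{s}_{\eps}(u)$ for a fixed $\bar{s}\in S$, i.e.~one repeats the proof of Lem.~\ref{lem:boundedImpliesMod} for the restricted infimum, which is what the paper does. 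Relatedly, your justification of $\bar{\sigma}(S)\le\sigma(S)$ when $\sigma(S)\ge S$ --- ``since $\bar{u}$ is an admissible upper bound'' --- points at the wrong witness: admissibility of $\bar{u}$ only yields $\bar{\sigma}(S)\le\bar{u}$. What is needed is that $\sigma(S)$ itself satisfies $\bar{u}\ge\sigma(S)\ge S$ and is therefore admissible in the restricted infimum, whence $\bar{\sigma}_{\eps}(S)\le\sup\left\{ s_{\eps}(\sigma(S))\mid s\in S\right\} \le\sigma_{\eps}(S)$. Both repairs are local; the remainder of the proposal matches the paper.
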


\begin{proof}
\ref{enu:supLUB} and \ref{enu:sup=00003DLUB}: Assume that $\exists\sup(S)=\sigma$,
and let $u$ be an upper bound of $S$; by condition \eqref{eq:2defSup}
we get $\sigma-\diff{\rho}^{q}\le s_{q}\le u$ for all $q\in\N$ and
for some $s_{q}\in S$. For $q\to+\infty$, we get $\sigma\le u$.
Vice versa, if $S\ne\emptyset$ is complete from above and $\sigma$
is the least upper bound of $S$, then the conclusion follows from
Cor.~\ref{cor:existenceSupCFA}.

\ref{enu:lessUB}: If $\bar{s}\in S\le u$, we can prove that $(\bar{\sigma}_{\eps}(S))\in\R_{\rho}$
and $\bar{s}\le\bar{\sigma}(S)\le u$ as in the proof of Lem.~\ref{lem:boundedImpliesMod}.
We always have that $\sigma(S)\le\bar{u}$ because $\bar{u}\ge S$.
Therefore, if $\sigma(S)\ge S$, then $\bar{u}\ge\sigma(S)\ge S$
and hence $\bar{\sigma}(S)\le\sigma(S)\le\bar{\sigma}(S)$. Finally,
if we assume that $\bar{\sigma}(S)\ge S$ and we consider an arbitrary
upper bound $u\ge S$, then either $u\ge\bar{u}$ or $u<_{L}\bar{u}$
for some $L\subzero I$. Thereby, $\bar{\sigma}(S)\le u$ or $\bar{\sigma}(S)\le_{L}u$,
and hence $\bar{\sigma}(S)\le u$. Therefore, $\bar{\sigma}(S)$ is
the least upper bound of $S$, and the final claim follows from \ref{enu:sup=00003DLUB}.

\ref{enu:SupImpliesCFA}: From Lem.~\ref{lem:mainCondLUB}, we have
$\sigma(S)=\text{lub}(S)=:\sigma$ and hence $\sup(S)=\sigma\in\overline{S}$
from \ref{enu:LUPSup}. From $\text{\emph{b}}(\sigma(S),\eps)=\sigma_{\eps}(S)$
and \eqref{eq:s_epsLeu_eps} we have $s_{\eps}(\sigma)\le\sigma_{\eps}$
for all $s\in S$ and all $\eps\in I$. In particular, if $\left(s^{e}\right)_{e\in I}$
is a family of $S$, we have $s_{\eps}^{e}(\sigma)\le\sigma_{\eps}$
for all $e\in I$ and all $\eps\in I$. Taking $e=\eps$, we get that
Def.~\ref{def:complAbove}.\ref{enu:CFA-diag} holds.
\end{proof}
\begin{example}
\label{exa:CFA}~
\begin{enumerate}
\item Example \ref{exa:supInf}.\ref{enu:01+dynamicPt} shows that the assumption
of being complete from above is necessary in Cor.~\ref{cor:supLUB}.
On the other hand, using the notation of this example, one can prove
that $\sigma_{\eps}(S)=2$ if $\eps\in L$ and $\sigma_{\eps}(S)=1$
if $\eps\in L^{c}$. From Lem.~\ref{lem:mainCondLUB} it follows
that $\sigma(S)$ is the least upper bound of $S$. This underscores
the differences between the order theoretical definition of supremum
as least upper bound and the topological definition of closed supremum.
\item Any set having a maximum is trivially complete from above: set $[\bar{s}_{\eps}]:=\max(S)$
in \eqref{eq:complAbove} and consider that $\sigma(S)=\max(S)$.
\item $S=(0,1)$ is complete from above for $[\bar{s}_{\eps}]=1$ and because
$\sigma_{\eps}(S)=:\sigma_{\eps}=1$. In fact, $\sigma_{\eps}\le1$
from \eqref{eq:sigmaLEsupLEu}. Now, take any $u=[u_{\eps}]\ge S$,
so that $u_{\eps}\ge1$ for all $\eps\ge\eps_{0}$. For $\eps\ge\eps_{0}$,
by contradiction assume that $1>\sup\left\{ s_{\eps}(u)\mid s\in S\right\} $,
and hence $1>r>\sup\left\{ s_{\eps}(u)\mid s\in S\right\} $ for some
$r\in(0,1)_{\R}\subseteq S$. Take $r_{\eps}=r$ as representative
of $r$ in \eqref{eq:s_epsLeu_eps}; we have two cases: If $r_{\eps}(u)=u_{\eps}\ge1$,
then $1>\sup\left\{ s_{\eps}(u)\mid s\in S\right\} \ge r_{\eps}(u)=u_{\eps}\ge1$;
if $r_{\eps}(u)=r$, then $\sup\left\{ s_{\eps}(u)\mid s\in S\right\} \ge r_{\eps}(u)=r>\sup\left\{ s_{\eps}(u)\mid s\in S\right\} $.
In any case, we get a contradiction, and this proves that $1\le\sup\left\{ s_{\eps}(u)\mid s\in S\right\} $
for all $\eps\le\eps_{0}$, and hence $\sigma_{\eps}\ge1$.
\item There do not exist neither the supremum nor the least upper bound
of $S=1+D_{\infty}$. On the other hand, $2$ is an AUB of $S$ and
hence $S$ is not complete from above.
\item $D_{\infty}$ has neither AUB nor ALB; $\rcrho$ has neither AUB nor
ALB; $\{\diff{\rho}^{r}\mid r\in\R_{>0}\}$ has no supremum and no
AUB and hence it is not complete from above.
\item Assume that there does not exist and upper bound of $S$. This means
that
\[
\forall u\in\rcrho\,\exists s\in S:\ s\sbpt{>}u.
\]
Thereby, there exists a sequence $(s_{q})_{q\in\N}$ of $S$ such
that $s_{q}\sbpt{>}\diff{\rho}^{-q}$. Based on this, we could set
$\sup(S):=+\infty$.
\end{enumerate}
\end{example}

\section{The hyperlimit of a hypersequence}

\subsection{Definition and examples}
\begin{defn}
\label{def:hyperlimit}A map $x:\hyperN{\sigma}\ra\rcrho$, whose
domain is the set of hypernatural numbers $\hyperN{\sigma}$ is called
a ($\sigma-$) \emph{hypersequence} (of elements of $\rcrho$). The
values $x(n)\in$ $\rcrho$ at $n\in$$\hyperN{\sigma}$ of the function
$x$ are called \emph{terms} of the hypersequence and, as usual, denoted
using an index as argument: $x_{n}=x(n)$. The hypersequence itself
is denoted by $(x_{n})_{n\in\hyperN{\sigma}}$, or simply $(x_{n})_{n}$
if the gauge on the domain is clear from the context. Let $\sigma$,
$\rho$ be two gauges, $x:\hyperN{\sigma}\ra\rcrho$ be a hypersequence
and $l\in\rcrho$. We say that $l$ is \emph{hyperlimit} of $(x_{n})_{n}$
as $n$$\rightarrow\infty$ and $n$$\in\hyperN{\sigma}$, if
\[
\forall q\in N\,\exists M\in\hyperN{\sigma}\,\forall n\in\hyperN{\sigma}_{\geq M}:\ |x_{n}-l|<\diff{\rho}^{q}.
\]
In the following, if not differently stated, $\rho$ and $\sigma$
will always denote two gauges and $(x_{n})_{n}$ a $\sigma$-hypersequence
of elements of $\rcrho$. Finally, if $\sigma_{\eps}\ge\rho_{\eps}$,
at least for all $\eps$ small, we simply write $\sigma\ge\rho$.
\end{defn}

\begin{rem}
In the assumption of Def.~\ref{def:hyperlimit}, let $k\in\rcrho_{>0}$,
$N\in\N$, then the following are equivalent:
\begin{enumerate}
\item $l$$\in\rcrho$ is the hyperlimit of $(x_{n})_{n}$ as $n\in\hyperN{\sigma}$.
\item $\forall\eta\in\rcrho_{>0}\,\exists M\in\hyperN{\sigma}\,\forall n\in\hyperN{\sigma}_{\geq M}:\ |x_{n}-l|<\eta$.
\item Let $U\subseteq\rcrho$ be a sharply open set, if $l\in U$ then $\exists M\in\hyperN{\sigma}\,\forall n\in\hyperN{\sigma}_{\geq M}:\ x_{n}\in U$.
\item $\forall q\in\N\,\exists M\in\hyperN{\sigma}\,\forall n\in\hyperN{\sigma}_{\geq M}:\ |x_{n}-l|<k\cdot\diff{\rho}^{q}$.
\item $\forall q\in\N\,\exists M\in\hyperN{\sigma}\,\forall n\in\hyperN{\sigma}_{\geq M}:\ |x_{n}-l|<\diff{\rho}^{q-N}$.
\end{enumerate}
\end{rem}

\noindent Directly by the inequality $|l_{1}-l_{2}|\leq|l_{1}-x_{n}|+|l_{2}-x_{n}|\leq2\diff{\rho}^{q+1}<\diff{\rho}^{q}$
(or by using that the sharp topology on $\rcrho$ is Hausdorff) it
follows that there exists at most one hyperlimit, so that we can use
the notation

\begin{eqnarray*}
\hyperlim{\rho}{\sigma}x_{n} & := & l.
\end{eqnarray*}

\noindent As usual, a hypersequence (not) having a hyperlimit is said
to be (non-)convergent. We can also similarly say that $(x_{n})_{n}:\hyperN{\sigma}\ra\rcrho$
is divergent to $+\infty$ ($-\infty$) if
\[
\forall q\in\mathbb{N}\,\exists M\in\hyperN{\sigma}\,\forall n\in\hyperN{\sigma}_{\geq M}:\ x_{n}>\diff{\rho}^{-q}\quad(x<-\diff{\rho}^{-q}).
\]

\begin{example}
\label{exa:hyperlimits}\ 
\begin{enumerate}
\item If $\sigma\le\rho^{R}$ for some $R\in\R_{>0}$, we have $\hyperlim{\rho}{\sigma}\frac{1}{n}=0$.
In fact, $\frac{1}{n}<\diff{\rho}^{q}$ holds e.g.~if $n>\left[\text{int}\left(\rho_{\eps}^{-q}\right)+1\right]\in\hyperN{\sigma}$
because $\rho_{\eps}^{-q}\le\sigma_{\eps}^{-q/R}$ for $\eps$ small.
\item Let $\rho$ be a gauge and set $\sigma_{\eps}:=\exp\bigg(-\rho_{\eps}^{-\frac{1}{\rho_{\eps}}}\bigg)$,
so that $\sigma$ is also a gauge. We have 
\[
\hyperlim{\rho}{\sigma}\frac{1}{\log n}=0\in\rcrho\text{ whereas \ensuremath{\nexists}}\hyperlim{\rho}{\rho}\frac{1}{\log n}
\]
In fact, if $n>1$, we have $0<\frac{1}{\log n}<\diff\rho{}^{q}$
if and only if $\log n>\diff\rho{}^{-q}$, i.e. $n>e^{\diff\rho{}^{-q}}$(in
$\rcrho$). We can thus take $M:=\bigg[\text{int}\bigg(e^{\rho_{\eps}^{-q}}\bigg)+1\bigg]\in\hyperN{\sigma}$
because $e^{\rho_{\eps}^{-q}}<\exp\bigg(\rho_{\eps}^{-\frac{1}{\rho_{\eps}}}\bigg)=\sigma_{\eps}^{-1}$
for $\eps$ small. Vice versa, by contradiction, if $\exists\hyperlim{\rho}{\rho}\frac{1}{\log n}=:l\in\rcrho$,
then by the definition of hyperlimit from $\hyperN{\rho}$ to $\rcrho$,
we would get the existence of $M\in\hyperN{\rho}$ such that
\begin{equation}
\forall n\in\hyperN{\rho}:n\ge M\ \Rightarrow\ \frac{1}{\log n}-\diff{\rho}<l<\frac{1}{\log n}+\diff{\rho}\label{eq:hyperlimit2-1}
\end{equation}
We have to explore two possibilities: if $l$ is not invertible, then
$l_{\eps_{k}}=0$ for some sequence $(\eps_{k})\downarrow0$ and some
representative $[l_{\eps}]=l$. Therefore from \ref{def:hyperlimit},
we get 
\[
\frac{1}{\log M_{\eps_{k}}}<l_{\eps_{k}}+\rho_{\eps_{k}}=\rho_{\eps_{k}}
\]
hence $M_{\eps_{k}}>e^{-\frac{1}{\rho_{_{\eps_{k}}}}}$ $\forall k\in\mathbb{N}$,
in contradiction with $M\in\rcrho$. If $l$ is invertible, then $\diff\rho^{p}<|l|$
for some $p\in\mathbb{N}$. Setting $q:=\min\{p\in\mathbb{N}\mid\diff\rho^{p}<|l|\}+1$,
we get that $l_{\bar{\eps}_{k}}<\rho_{\bar{\eps}_{k}}^{q}$ for some
sequence $(\bar{\eps}_{k})_{k}\downarrow0$. Therefore 
\[
\frac{1}{\log M_{\bar{\eps}_{k}}}<l_{\bar{\eps}_{k}}+\rho_{\bar{\eps}_{k}}\le|l_{\bar{\eps}_{k}}|+\rho_{\bar{\eps}_{k}}<\rho_{\bar{\eps}_{k}}^{q}+\rho_{\bar{\eps}_{k}}
\]
and hence $M_{\bar{\eps}_{k}}>\exp\bigg(\frac{1}{\rho_{\bar{\eps}_{k}}^{q}+\rho_{\eps_{k}}}\bigg)$
for all $k\in\mathbb{N}$, which is in contradiction with $M\in\rcrho$
because $q\ge1$.\\
Analogously, we can prove that $\hyperlim{\rho}{\sigma}\frac{1}{\log(\log n)}=0$
if $\sigma=[\sigma_{\epsilon}]=\left[e^{-e^{\rho_{\epsilon}^{-\frac{1}{\rho_{\epsilon}}}}}\right]$
whereas $\nexists\,\hyperlim{\rho}{\rho}\frac{1}{\log(\log n)}$ (and
similarly using $\log(\log(\ptind^{k}(\log n)\ldots)$.
\item Set $x_{n}:=\diff{\rho}^{-n}$ if $n\in\N$, and $x_{n}:=\frac{1}{n}$
if $n\in\hyperN{\rho}\setminus\N$, then $\{x_{n}\mid n\in\hyperN{\rho}\}$
is unbounded in $\rcrho$ even if $\hyperlim{\rho}{\rho}x_{n}=0$.
Similarly, if $x_{n}:=\diff{\rho}^{n}$ if $n\in\N$ and $x_{n}:=\sin(n)$
otherwise, then $\lim_{\substack{n\to+\infty\\
n\in\N
}
}x_{n}=0$ whereas $\nexists\,\hyperlim{\rho}{\rho}x_{n}$. In general, we can
hence only state that convergent hypersequence are eventually bounded:
\[
\exists\,\hyperlim{\rho}{\sigma}x_{n}\ \Rightarrow\ \exists M\in\rcrho\,\exists N\in\hypNs\,\forall n\in\hypNs_{\ge N}:\ |x_{n}|\le M.
\]
\item If $k\sbpt{<}1$ and $k\sbpt{>}1$, then $\hyperlim{\rho}{\rho}k^{n}\sbpt{=}0$
and $\hyperlim{\rho}{\rho}k^{n}\sbpt{=}+\infty$, hence $\nexists\hyperlim{\rho}{\rho}k^{n}$.
\item Since for $n\in\N$ we have $(1-\diff{\rho})^{n}=1-n\diff{\rho}+O_{n}(\diff{\rho}^{2})$,
it is not hard to prove that $\left((1-\diff{\rho})^{n}\right)_{n\in\N}$
is not a Cauchy sequence. Therefore, $\nexists\lim_{n\in\N}(1-\diff{\rho})^{n}$,
whereas $\hyperlim{\rho}{\rho}(1-\diff{\rho})^{n}=0$.
\end{enumerate}
\end{example}

A sufficient condition to extend an ordinary sequence $(a_{n})_{n\in\N}:\N\ra\rcrho$
of $\rho$-generalized numbers to the whole $\hyperN{\sigma}$ is
\begin{equation}
\forall n\in\hyperN{\sigma}:\ \left(a_{\nint(n)_{\eps}}\right)\in\R_{\rho}.\label{eq:ext_a}
\end{equation}
In fact, in this way $a_{n}:=\left[a_{\nint(n)_{\eps}}\right]\in\rcrho$
for all $n\in\hypNs$, is well-defined because of Lem.~\ref{lem:nearestInt};
on the other hand, we have defined an extension of the old sequence
$(a_{n})_{n\in\N}$ because if $n\in\N$, then $\nint(n)_{\eps}=n$
for $\eps$ small and hence $a_{n}=[a_{n}]$. For example, the sequence
of infinities $a_{n}=\frac{1}{n}+\diff{\rho}^{-1}$ for all $n\in\N$
can be extended to any $\hyperN{\sigma}$, whereas $a_{n}=\diff{\sigma}^{-n}$
can be extended as $a:\hyperN{\sigma}\ra\rcrho$ only for some gauges
$\rho$, e.g.~if the gauges satisfy
\begin{equation}
\exists N\in\N\,\forall n\in\N\,\forall^{0}\eps:\ \sigma_{\eps}^{n}\ge\rho_{\eps}^{N},\label{eq:extendingPowers}
\end{equation}
(e.g.~$\sigma_{\eps}=\eps$ and $\rho_{\eps}=\eps^{1/\eps}$).

The following result allows us to obtain hyperlimits by proceeding
$\eps$-wise
\begin{thm}
\label{thm:epsWiseHyperlim}Let $(a_{n,\eps})_{n,\eps}:\N\times I\ra\R$.
Assume that for all $\eps$ 
\begin{equation}
\exists\lim_{n\to+\infty}a_{n,\eps}=:l_{\eps},\label{eq:epsLimAssumption}
\end{equation}
and that $l:=[l_{\eps}]\in\rcrho$. Then there exists a gauge $\sigma$
(not necessarily a monotonic one) such that
\begin{enumerate}
\item There exists $M\in\hyperN{\sigma}$ and a hypersequence $(a_{n})_{n}:\hyperN{\sigma}\ra\rcrho$
such that $a_{n}=[a_{\nint{(n)}_{\eps},\eps}]\in\rcrho$ for all $n\in\hyperN{\sigma}_{\ge M}$;
\item $l=\hyperlim{\rho}{\sigma}a_{n}$.
\end{enumerate}
\end{thm}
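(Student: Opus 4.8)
The plan is to extract, $\eps$-wise, quantitative convergence thresholds and then to build the gauge $\sigma$ by a diagonal argument so that all of these thresholds become $\sigma$-moderate \emph{simultaneously}. For each $q\in\N$ and each $\eps\in I$, assumption \eqref{eq:epsLimAssumption} lets me define
\[
N(q,\eps):=\min\left\{ N\in\N\mid\forall m\ge N:\ |a_{m,\eps}-l_{\eps}|\le\rho_{\eps}^{q}\right\} ,
\]
which is finite for every pair $(q,\eps)$ and, since $\rho_{\eps}\le1$, non-decreasing in $q$. The whole argument rests on one net observation: once $[n_{\eps}]\in\hyperN{\sigma}$ satisfies $n_{\eps}\ge N(q,\eps)$ for $\eps$ small, the term $[a_{n_{\eps},\eps}]$ is automatically $\diff{\rho}^{q}$-close to $l$; hence the hypernatural $[N(q,\eps)]$ is exactly the threshold $M$ required by Def.~\ref{def:hyperlimit}, \emph{provided} it lies in $\hyperN{\sigma}$.

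The key step is therefore the choice of $\sigma$. Set $k(\eps):=\lfloor1/\eps\rfloor$, let $A_{\eps}:=\max\left\{ N(q,\eps)\mid q\in\N,\ q\le k(\eps)\right\}$ (a finite maximum for each fixed $\eps$), and put
\[
\sigma_{\eps}:=\min\Big(\eps,\ \tfrac{1}{A_{\eps}+1}\Big).
\]
Then $\sigma_{\eps}\in(0,1]$ and $\sigma_{\eps}\le\eps\to0$, so $\sigma$ is a gauge, generally non-monotonic as the statement permits. For each fixed $q$ and all $\eps$ small enough that $k(\eps)\ge q$, one has $N(q,\eps)\le A_{\eps}<\sigma_{\eps}^{-1}$, so $(N(q,\eps))_{\eps}\in\R_{\sigma}$ and thus $[N(q,\eps)]\in\hyperN{\sigma}$ for \emph{every} $q$. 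This is the diagonalization that makes all the threshold nets moderate at once.

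For the first conclusion I would take $M:=[N(1,\eps)]\in\hyperN{\sigma}$. For $n=[n_{\eps}]\in\hyperN{\sigma}_{\ge M}$ the representative of a hypernatural is eventually unique, so $n_{\eps}\ge N(1,\eps)$ for $\eps$ small, whence $|a_{n_{\eps},\eps}-l_{\eps}|\le\rho_{\eps}\le1$ and therefore $|a_{n_{\eps},\eps}|\le|l_{\eps}|+1$; since $l\in\rcrho$, the net $(a_{n_{\eps},\eps})_{\eps}$ is $\rho$-moderate and $a_{n}:=[a_{\nint(n)_{\eps},\eps}]$ is a well-defined element of $\rcrho$ (independence from the representative of $n$ follows from Lem.~\ref{lem:nearestInt} and the eventual uniqueness of $\nint(n)_{\eps}$). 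On the complement of $\hyperN{\sigma}_{\ge M}$ I simply set $a_{n}:=0$, obtaining a total hypersequence $a:\hyperN{\sigma}\ra\rcrho$.

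For the second conclusion, fix $q\in\N$ and put $M_{q}:=[N(q+1,\eps)]\in\hyperN{\sigma}$. For every $n\in\hyperN{\sigma}_{\ge M_{q}}$ we get $n_{\eps}\ge N(q+1,\eps)$ for $\eps$ small, hence $|a_{n_{\eps},\eps}-l_{\eps}|\le\rho_{\eps}^{q+1}$, i.e.\ $|a_{n}-l|\le\diff{\rho}^{q+1}<\diff{\rho}^{q}$, which is exactly the defining inequality of $\hyperlim{\rho}{\sigma}a_{n}=l$. I expect the main obstacle to be not any single estimate but the bookkeeping of the diagonal construction: one must guarantee that a \emph{single} $\sigma$ renders the countably many threshold nets $(N(q,\cdot))_{q}$ all moderate while remaining a genuine gauge, and one must consistently keep apart the two asymptotic scales---$\sigma$ governing the admissible size of the hypernatural indices and $\rho$ governing the precision of the values---since it is precisely the freedom to enlarge $\sigma_{\eps}^{-1}$ beyond every fixed power of $\rho_{\eps}^{-1}$ that lets the $\eps$-wise limit survive as a hyperlimit.
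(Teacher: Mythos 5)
Your proposal is correct and follows essentially the same route as the paper's proof: you extract $\eps$-wise thresholds $N(q,\eps)$ (monotone in $q$ by minimality, where the paper arranges this WLOG), diagonalize via $A_\eps=N(\lfloor 1/\eps\rfloor,\eps)$ exactly as the paper does with $\bar M_\eps=M_{\eps,\lceil 1/\eps\rceil}$, and define $\sigma_\eps$ as a minimum with $A_\eps^{-1}$ so that every threshold net becomes $\sigma$-moderate at once. The remaining differences (using $\eps$ instead of $\rho_\eps$ in the minimum, setting $a_n:=0$ rather than $1$ off the tail) are immaterial.
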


\begin{proof}
From \eqref{eq:epsLimAssumption}, we have
\begin{equation}
\forall\eps\,\forall q\,\exists M_{\eps q}\in\N_{>0}\,\forall n\ge M_{\eps q}:\ \rho_{\eps}^{q}-l_{\eps}<a_{n,\eps}<\rho_{\eps}^{q}+l_{\eps}.\label{eq:epsLim}
\end{equation}
Without loss of generality, we can assume to have recursively chosen
$M_{\eps q}$ so that
\begin{equation}
M_{\eps q}\le M_{\eps,q+1}\quad\forall\eps\,\forall q.\label{eq:M_eps_q-ineq}
\end{equation}
Set $\bar{M}_{\eps}:=M_{\eps,\lceil\frac{1}{\eps}\rceil}>0$; since
$\forall q\in\N\,\forall^{0}\eps:\ q\le\lceil\frac{1}{\eps}\rceil$,
\eqref{eq:M_eps_q-ineq} implies
\begin{equation}
\forall q\in\N\,\forall^{0}\eps:\ \bar{M}_{\eps}\ge M_{\eps q}.\label{eq:Mbar-M_q-rel}
\end{equation}
If the net $(\bar{M}_{\eps})$ is $\rho$-moderate, set $\sigma:=\rho$,
otherwise set $\sigma_{\eps}:=\min\left(\rho_{\eps},\bar{M}_{\eps}^{-1}\right)\in(0,1]$.
Thereby, the net $\sigma_{\eps}\to0$ as $\eps\to0^{+}$ (note that
not necessarily $\sigma$ is non-decreasing, e.g.~if $\lim_{\eps\to\frac{1}{k}}\bar{M}_{\eps}=+\infty$
for all $k\in\N_{>0}$ and $\bar{M}_{\eps}\ge\rho_{\eps}^{-1}$),
i.e.~it is a gauge. Now set $\bar{M}:=[\bar{M}_{\eps}]\in\hypNs$
because our definition of $\sigma$ yields $\bar{M}_{\eps}\le\sigma_{\eps}^{-1}$,
$M_{q}:=[M_{\eps q}]\in\hyperN{\sigma}$ because of \eqref{eq:Mbar-M_q-rel},
and
\begin{equation}
a_{n}:=\begin{cases}
[a_{\nint{(n)}_{\eps},\eps}] & \text{if }n\ge M_{1}\text{ in }\hyperN{\sigma}\\
1 & \text{otherwise}
\end{cases}\quad\forall n\in\hyperN{\sigma}.\label{eq:extending-a}
\end{equation}
We have to prove that this well-defines a hypersequence $(a_{n})_{n}:\hyperN{\sigma}\ra\rcrho$.
First of all, the sequence is well-defined with respect to the equality
in $\hyperN{\sigma}$ because of Lem.~\ref{lem:nearestInt}. Moreover,
setting $q=1$ in \eqref{eq:epsLim}, we get $\rho_{\eps}-l_{\eps}<a_{n,\eps}<\rho_{\eps}+l_{\eps}$
for all $\eps$ and for all $n\ge M_{\eps1}$. If $n\ge M_{1}$ in
$\hyperN{\sigma}$, then $\nint{(n)}_{\eps}\ge M_{\eps1}$ for $\eps$
small, and hence $\rho_{\eps}-l_{\eps}<a_{\nint{(n)}_{\eps},\eps}<\rho_{\eps}+l_{\eps}$.
This shows that $a_{n}\in\rcrho$ because we assumed that $l=[l_{\eps}]\in\rcrho$.
Finally, \eqref{eq:epsLim} and \eqref{eq:M_eps_q-ineq} yield that
if $n\ge M_{q}$ then $n\ge M_{1}$ and hence $|a_{n}-l|<\diff{\rho}^{q}$.
\end{proof}
\noindent From the proof it also follows, more generally, that if
$(M_{\eps q})_{\eps,q}$ satisfies \eqref{eq:epsLim} and if
\[
\exists(q_{\eps})\to+\infty:\ \left(M_{\eps,q_{\eps}}\right)\in\R_{\rho},
\]
then we can repeat the proof with $q_{\eps}$ instead of $\lceil\frac{1}{\eps}\rceil$
and setting $\sigma:=\rho$.

\subsection{Operations with hyperlimits and inequalities}

Thanks to Def.~\ref{def:setOfRadii} of sharp topology and our notation
for $x<y$ (and of the consequent Lem.~\ref{lem:mayer}), some results
about hyperlimits can be proved by trivially generalizing classical
proofs. For example, if $(x_{n})_{n\in\hyperN{\sigma}}$ and $(y_{n})_{n\in\hyperN{\sigma}}$
are two convergent hypersequences then their sum $(x_{n}+y_{n})_{n\in\hyperN{\sigma}}$,
product $(x_{n}\cdot y_{n})_{n\in\hyperN{\sigma}}$ and quotient $\left(\frac{x_{n}}{y_{n}}\right)_{n\in\hyperN{\sigma}}$
(the last one being defined only when $y_{n}$ is invertible for all
$n$$\in\hyperN{\sigma}$) are convergent hypersequences and the corresponding
hyperlimits are sum, product and quotient of the corresponding hyperlimits.

The following results generalize the classical relations between limits
and inequalities.
\begin{thm}
\label{thm:strLessSqueeze}Let $x$, $y$, $z:\hyperN{\sigma}\ra\rcrho$
be hypersequences, then we have:
\begin{enumerate}
\item \label{enu:ineqHyperlim}If \textup{$\hyperlim{\rho}{\sigma}x_{n}<\hyperlim{\rho}{\sigma}y_{n}$,
then $\exists M$$\in\hyperN{\sigma}$ such that $x_{n}<y_{n}$ for
all $n\geq M$, $n\in\hyperN{\sigma}$.}
\item \label{enu:squeeze}If $x_{n}\leq y_{n}\leq z_{n}$ for all $n\in\hyperN{\sigma}$
and \textup{$\hyperlim{\rho}{\sigma}x_{n}=\hyperlim{\rho}{\sigma}z_{n}=:l$,
then $\exists\,\hyperlim{\rho}{\sigma}y_{n}=l,$}
\end{enumerate}
\end{thm}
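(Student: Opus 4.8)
The plan is to mimic the two classical proofs, the only genuinely non-Archimedean adaptation being that in $\rcrho$ a strict inequality $u<v$ unpacks as $u\le v$ together with invertibility of $v-u$, which by Lem.~\ref{lem:mayer} is controlled $\eps$-wise by a bound of the form $\diff{\rho}^{m}$. Throughout I will use that $\hyperN{\sigma}$ is a directed set (see the remark following Def.~\ref{def:nint}), so that any two threshold indices $M_{1},M_{2}$ produced by the hyperlimit definition admit a common upper bound $M\in\hyperN{\sigma}$, for which $\hyperN{\sigma}_{\ge M}\subseteq\hyperN{\sigma}_{\ge M_{1}}\cap\hyperN{\sigma}_{\ge M_{2}}$.

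For \ref{enu:ineqHyperlim}, write $a:=\hyperlim{\rho}{\sigma}x_{n}$ and $b:=\hyperlim{\rho}{\sigma}y_{n}$, so that $a<b$ means $b-a$ is positive and invertible. By Lem.~\ref{lem:mayer} I may then fix $q\in\N$ with $2\diff{\rho}^{q}<b-a$, whence $a+\diff{\rho}^{q}<b-\diff{\rho}^{q}$. Applying the definition of hyperlimit to this $q$ yields $M_{1},M_{2}\in\hyperN{\sigma}$ with $x_{n}<a+\diff{\rho}^{q}$ for $n\ge M_{1}$ and $b-\diff{\rho}^{q}<y_{n}$ for $n\ge M_{2}$. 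Choosing $M$ a common upper bound of $M_{1},M_{2}$, for every $n\in\hyperN{\sigma}_{\ge M}$ the chain $x_{n}<a+\diff{\rho}^{q}<b-\diff{\rho}^{q}<y_{n}$ gives $x_{n}<y_{n}$, using transitivity of the strict order.

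For \ref{enu:squeeze}, the key remark is a purely $\eps$-wise estimate: since $x_{\eps}\le y_{\eps}\le z_{\eps}$ one has $|y_{\eps}-l_{\eps}|\le\max\bigl(|x_{\eps}-l_{\eps}|,|z_{\eps}-l_{\eps}|\bigr)$ for all $\eps$, hence $|y_{n}-l|\le|x_{n}-l|\vee|z_{n}-l|$ in $\rcrho$. Fix $q\in\N$; by the two hyperlimit hypotheses there are $M_{1},M_{2}\in\hyperN{\sigma}$ with $|x_{n}-l|<\diff{\rho}^{q}$ for $n\ge M_{1}$ and $|z_{n}-l|<\diff{\rho}^{q}$ for $n\ge M_{2}$. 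Taking $M$ a common upper bound, for $n\in\hyperN{\sigma}_{\ge M}$ I get $|x_{n}-l|\vee|z_{n}-l|<\diff{\rho}^{q}$, and therefore $|y_{n}-l|\le|x_{n}-l|\vee|z_{n}-l|<\diff{\rho}^{q}$; as $q$ was arbitrary, this is exactly $\hyperlim{\rho}{\sigma}y_{n}=l$.

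The only points requiring the non-Archimedean structure, and hence the main obstacle, are the lattice-invertibility steps hidden in the two displayed chains: I must verify that (a) $u<v$ and $v<w$ imply $u<w$ (transitivity of the order-theoretic strict inequality $\le$ plus invertible difference), and (b) if $u,v>0$ are invertible then so are $u\wedge v$ and $u\vee v$, equivalently that $\diff{\rho}^{q}-\bigl(|x_{n}-l|\vee|z_{n}-l|\bigr)=\bigl(\diff{\rho}^{q}-|x_{n}-l|\bigr)\wedge\bigl(\diff{\rho}^{q}-|z_{n}-l|\bigr)$ is positive invertible. Both facts follow at once from the characterisation of positive invertible elements in Lem.~\ref{lem:mayer} applied $\eps$-wise, taking the larger of the relevant exponents; so beyond Lem.~\ref{lem:mayer} and the directedness of $\hyperN{\sigma}$ no new ingredient is needed, and the classical arguments transfer essentially verbatim.
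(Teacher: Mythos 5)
Your proposal is correct and follows essentially the same route as the paper: part (i) is exactly the intended combination of Lem.~\ref{lem:mayer} (to extract a $q$ with $2\diff{\rho}^{q}<b-a$) with the definition of hyperlimit, and part (ii) is the classical squeeze, which the paper runs by sandwiching $l-\diff{\rho}^{q}<x_{n}\le y_{n}\le z_{n}<l+\diff{\rho}^{q}$ directly while you route it through $|y_{n}-l|\le|x_{n}-l|\vee|z_{n}-l|$ --- an immaterial difference. Your explicit verification of the lattice-invertibility steps (transitivity of $<$ and invertibility of $\wedge$/$\vee$ of positive invertibles) is sound and just makes precise what the paper leaves implicit.
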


\begin{proof}
\ref{enu:ineqHyperlim} follows from Lem.~\ref{lem:mayer} and the
Def.~\ref{def:hyperlimit} of hyperlimit. For \ref{enu:squeeze},
the proof is analogous to the classical one. In fact, since $\hyperlim{\rho}{\sigma}x_{n}=\hyperlim{\rho}{\sigma}z_{n}=:l$
given $q\in\N$, there exist $M'$, $M''\in\hyperN{\sigma}$ such
that $l-\diff{\text{\ensuremath{\rho}}}^{q}<x_{n}$ and $z_{n}<l+\diff{\rho}^{q}$
for all $n>M'$, $n>M'',n\in\hyperN{\sigma}$, then for $n>M:=M'\vee M''$,
we have $l-\diff{\text{\ensuremath{\rho}}}^{q}<x_{n}\leq y_{n}\leq z_{n}<l+\diff{\text{\ensuremath{\rho}}}^{q}$.
\end{proof}
\begin{thm}
\label{thm:closed}Assume that $C$ is a sharply closed subset of
$\rcrho$, that $\exists\ \hyperlim{\rho}{\sigma}x_{n}=:l$ and that
$x_{n}$ eventually lies in $C$, i.e.~$\exists N\in\hyperN{\sigma}\,\forall n\in\hyperN{\sigma}_{\ge N}:\ x_{n}\in C$.
Then also $l\in C$. In particular, if $(y_{n})_{n}$ is another hypersequence
such that $\exists\,\hyperlim{\rho}{\sigma}y_{n}=:k$, then $\exists N\in\hyperN{\sigma}\,\forall n\in\hyperN{\sigma}_{\ge N}:\ x_{n}\ge y_{n}$
implies $l\geq k$.
\end{thm}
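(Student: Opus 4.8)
The plan is to derive the first assertion from the topological characterization of the hyperlimit, and then to obtain the ``in particular'' statement as a special case of it.

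For the main claim I would argue by contradiction. Suppose $l\notin C$. Since $C$ is sharply closed, its complement $C^{c}=\rcrho\setminus C$ is sharply open and contains $l$. The key tool is item (iii) of the Remark following Def.~\ref{def:hyperlimit}, which characterizes the hyperlimit via sharply open neighborhoods of $l$: applied with $U:=C^{c}$, it produces some $M\in\hyperN{\sigma}$ with $x_{n}\in C^{c}$ for all $n\in\hyperN{\sigma}_{\ge M}$. The hypothesis supplies $N\in\hyperN{\sigma}$ with $x_{n}\in C$ for all $n\in\hyperN{\sigma}_{\ge N}$. To reach the contradiction I would combine these two eventually-true conditions using that $\hyperN{\sigma}$ is a directed set closed under $\vee$: the element $M\vee N=[\max(M_{\eps},N_{\eps})]$ lies in $\hyperN{\sigma}$ and dominates both $M$ and $N$, so evaluating at $n:=M\vee N$ forces $x_{n}\in C\cap C^{c}=\emptyset$. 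Hence $l\in C$.

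For the final assertion I would reduce to the part just proved by passing to the difference hypersequence $z_{n}:=x_{n}-y_{n}$. By the algebra of hyperlimits recalled before Thm.~\ref{thm:strLessSqueeze}, $(z_{n})_{n}$ converges with $\hyperlim{\rho}{\sigma}z_{n}=l-k$, while the hypothesis $x_{n}\ge y_{n}$ for $n\ge N$ says precisely that $z_{n}$ eventually lies in $C:=\rcrho_{\ge0}$. The one point needing verification is that this $C$ is sharply closed; but $\rcrho_{\ge0}=[[0,+\infty)]$ is an internal set, hence sharply closed by Thm.~\ref{thm:strongMembershipAndDistanceComplement}.\ref{enu:internalAreClosed}. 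Applying the first part to $(z_{n})_{n}$ and this $C$ gives $l-k\in\rcrho_{\ge0}$, that is $l\ge k$.

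I expect no delicate estimates here: the argument is a direct transcription of the classical fact that limits preserve membership in closed sets, now phrased in the language of $\hyperN{\sigma}$ and the sharp topology. The only mild obstacle is to recognize that the correct closed set in the second part is the internal set $\rcrho_{\ge0}$ and to invoke its sharp closedness.
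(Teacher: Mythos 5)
Your proof is correct and follows essentially the same route as the paper's: contradiction via the sharply open complement of $C$ together with the neighbourhood characterization of the hyperlimit (the paper unpacks this into a ball $B_{\eta}(l)\subseteq\rcrho\setminus C$, you invoke the open-set form of the Remark directly, and both combine the two eventual conditions at a common index). The paper leaves the ``in particular'' claim implicit, and your reduction to $z_{n}:=x_{n}-y_{n}$ with $C=\rcrho_{\ge0}=[[0,+\infty)]$, sharply closed as an internal set, is a correct and natural way to fill that in.
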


\begin{proof}
A reformulation of the usual proof applies. In fact, let us suppose
that $l\in\rcrho\setminus C$. Since $\rcrho\setminus C$ is sharply
open, there is an $\eta>0,$ for which $B_{\eta}(l)\text{\ensuremath{\subseteq} \ensuremath{\rcrho}\ensuremath{\setminus C}}$.
Let $\bar{n}\in\hyperN{\sigma}_{\ge N}$ be such that $|x_{n}-l|<\eta$
when $n>\bar{n}$. Then we have $x_{n}\in C$ and $x_{n}\in B_{\eta}(l)\subseteq\rcrho\setminus C$,
a contradiction.
\end{proof}
The following result applies to all generalized smooth functions (and
hence to all Colombeau generalized functions, see e.g.~\cite{GKV,TI};
see also \cite{AFJ} for a more general class of functions) because
of their continuity in the sharp topology.
\begin{thm}
\label{thm:cont}Suppose that $f:U\ra\rcrho$. Then $f$ is sharply
continuous function at $x=c$ if and only if it is hyper-sequentially
continuous, i.e.~for any hypersequence $\left(x_{n}\right)_{n}$
in $U$ converging to $c$, the hypersequence $\left(f\left(x_{n}\right)\right)_{n}$
converges to $f\left(c\right)$, i.e.~$f\left(\hyperlim{\rho}{\sigma}x_{n}\right)=\hyperlim{\rho}{\sigma}f(x_{n})$.
\end{thm}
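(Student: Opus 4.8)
Plan. The statement is the familiar equivalence ``continuous $\Leftrightarrow$ sequentially continuous'', transported to the sharp topology; the forward implication is routine and all the content is in the converse. The structural fact behind everything is that the sharp topology on $\rcrho$ is \emph{first countable}: by Lem.~\ref{lem:mayer} every invertible $r\in\rcrho_{>0}$ satisfies $\diff{\rho}^{m}\le r$ for some $m\in\N$, so $B_{\diff{\rho}^{m+1}}(c)\subseteq B_{r}(c)$ and hence $\{B_{\diff{\rho}^{q}}(c)\mid q\in\N\}$ is a countable neighbourhood basis at each $c$. In particular sharp continuity of $f$ at $c$ is equivalent to the first-order condition $\forall q\in\N\,\exists p\in\N\,\forall x\in U:\ |x-c|<\diff{\rho}^{p}\Rightarrow|f(x)-f(c)|<\diff{\rho}^{q}$.

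$(\Rightarrow)$ Suppose $f$ is sharply continuous at $c$ and let $(x_{n})_{n}\to c$. Fix $q\in\N$ and pick $p\in\N$ with $f\bigl(B_{\diff{\rho}^{p}}(c)\cap U\bigr)\subseteq B_{\diff{\rho}^{q}}(f(c))$. Since $x_{n}\to c$ there is $M\in\hyperN{\sigma}$ with $|x_{n}-c|<\diff{\rho}^{p}$ for all $n\in\hyperN{\sigma}_{\ge M}$, whence $|f(x_{n})-f(c)|<\diff{\rho}^{q}$ for those $n$. As $q$ is arbitrary this is exactly $\hyperlim{\rho}{\sigma}f(x_{n})=f(c)$, by Def.~\ref{def:hyperlimit}.

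$(\Leftarrow)$ I argue by contraposition: assuming $f$ is \emph{not} continuous at $c$, I exhibit a convergent hypersequence whose image does not converge to $f(c)$. Negating the condition above gives one fixed $q\in\N$ and, for every $p\in\N$, a point $x_{p}\in U$ with $|x_{p}-c|<\diff{\rho}^{p}$ and $|f(x_{p})-f(c)|\not<\diff{\rho}^{q}$. These bad points are indexed only by $\N$, so the crux is to redistribute them over the whole index set $\hyperN{\sigma}$ while keeping each term an \emph{exact} bad point (so that $f$ on it stays controlled). For this I use that $\hyperN{\sigma}$ carries the totally ordered, strictly increasing, cofinal chain $N_{k}:=[\text{int}(\sigma_{\eps}^{-k})+1]\in\hyperN{\sigma}$, $k\in\N$: by $\sigma$-moderateness (Def.~\ref{def:hypernatural}) any $n=[n_{\eps}]\in\hyperN{\sigma}$ satisfies $n\le N_{k}$ for some $k$, so $\{k\in\N\mid N_{k}\le n\}$ is a finite initial segment of $\N$ and $r(n):=\max\{k\mid N_{k}\le n\}$ (and $r(n):=1$ if the set is empty) is a well-defined map $r:\hyperN{\sigma}\to\N$. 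Define the hypersequence $a_{n}:=x_{r(n)}$. Then $r(n)\to\infty$, since $n\ge N_{K}$ forces $K\in\{k\mid N_{k}\le n\}$ and hence $r(n)\ge K$; together with $|a_{n}-c|=|x_{r(n)}-c|<\diff{\rho}^{r(n)}$ this yields $\hyperlim{\rho}{\sigma}a_{n}=c$. On the other hand $r(n)\in\N$ is a genuine natural, so $a_{n}$ literally equals a bad point and $|f(a_{n})-f(c)|\not<\diff{\rho}^{q}$ for all $n\ge N_{1}$; thus $(f(a_{n}))_{n}$ stays outside $B_{\diff{\rho}^{q}}(f(c))$ on a tail and cannot converge to $f(c)$, contradicting hyper-sequential continuity.

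The main obstacle is exactly this converse construction. The tempting extension of $(x_{p})_{p}$ by a diagonal net $a_{n}=[x_{\nint(n)_{\eps},\eps}]$ is fatal: for an arbitrary $f:U\to\rcrho$ one has no control over $f$ at such an interlaced point, and the finite indices alone are not cofinal in $\hyperN{\sigma}$, so they cannot witness divergence. The point that makes the proof go through — and the step I expect to require the most care — is that, in contrast to $\hyperR$, the ordered set $\hyperN{\sigma}$ is \emph{countably} cofinal via the chain $(N_{k})_{k}$; this is what allows the index map $r$ to be taken constant-valued in $\N$ rather than an $\eps$-dependent net, so that $f(a_{n})=f(x_{r(n)})$ remains an exact bad value while still $a_{n}\to c$. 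Verifying that $r$ is well defined despite $\le$ being only a partial order (using that the $N_{k}$ form a chain and that $\{k\mid N_{k}\le n\}$ is a finite down-set) is the delicate bookkeeping of the argument.
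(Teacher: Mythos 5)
Your proof is correct and follows essentially the same route as the paper's: the nontrivial direction is handled by pulling the countably many ``bad points'' back along a well-defined map $\hypNs\to\N$ tending to $+\infty$ (your $r(n)=\max\{k\mid N_k\le n\}$ plays exactly the role of the paper's $\omega_n=\min\{N\in\N\mid n\le\diff{\rho}^{-N}\}$), so that each term of the constructed hypersequence is literally one of the bad points while still converging to $c$. You additionally write out the easy implication, which the paper omits as classical, and you are slightly more careful than the paper in using the gauge $\sigma$ (rather than $\rho$) for the cofinal chain indexing $\hypNs$.
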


\begin{proof}
We only prove that the hyper-sequential continuity is a sufficient
condition, because the other implication is a trivial generalization
of the classical one. By contradiction, assume that for some $Q\in\N$
\begin{equation}
\forall n\in\N\,\exists x_{n}\in U:\ |x_{n}-c|<\diff{\rho}^{n},\ |f(x_{n})-f(c)|>_{\text{s}}\diff{\rho}^{Q}.\label{eq:seqContContr}
\end{equation}
For $n\in\N$ set $\omega_{n}:=n$ and for $n\in\hypNr\setminus\N$
set $\omega_{n}:=\min\left\{ N\in\N\mid n\le\diff{\rho}^{-N}\right\} $
and $x_{n}:=x_{\omega_{n}}$. Then for all $n\in\hypNr$, from \eqref{eq:seqContContr}
we get $|x_{n}-c|<\diff{\rho}^{\omega_{n}}\to0$ because $\omega_{n}\to+\infty$
as $n\to+\infty$ in $n\in\hypNr$. Therefore, $(x_{n})_{n}$ is an
hypersequence of $U$ that converges to $c$, which yields $f(x_{n})\to f(c)$,
in contradiction with \eqref{eq:seqContContr}.
\end{proof}
\begin{example}
Let $\sigma\le\rho^{R}$ for some $R\in\R_{>0}$. The following inequalities
hold for all generalized numbers because they also hold for all real
numbers:
\begin{align}
\ln(x) & \le x\nonumber \\
e\left(\frac{n}{e}\right)^{n} & \le n!\le en\left(\frac{n}{e}\right)^{n}.\label{eq:sterl}
\end{align}
From the first one it follows $0\le\frac{\ln(n)}{n}=\frac{2\ln\sqrt{n}}{n}\le\frac{2\sqrt{n}}{n}$,
so that $\hyperlim{\rho}{\sigma}\frac{\ln(n)}{n}:=0$ from Thm\@.~\ref{thm:strLessSqueeze}
and $\hyperlim{\rho}{\sigma}n^{1/n}=1$ from Thm\@.~\ref{thm:cont}
and hence $\hyperlim{\rho}{\sigma}(n!)^{1/n}=+\infty$ by \eqref{eq:sterl}.
Similarly, we have $\underset{}{\hyperlim{\rho}{\sigma}}\left(1+\frac{1}{n}\right)^{n}=e$
because $n\log\left(1+\frac{1}{n}\right)=1-\frac{1}{2n}+O\left(\frac{1}{n^{2}}\right)\to1$
and because of Thm\@.~\ref{thm:cont}.
\end{example}

A little more involved proof concerns L'Hôpital rule for generalized
smooth functions. For the sake of completeness, here we only recall
the equivalent definition:
\begin{defn}
Let $X\subseteq\rcrho^{n}$ and $Y\subseteq\rcrho^{d}$. We say that
$f:X\longrightarrow Y$ is a \emph{generalized smooth function}\textcolor{red}{{}
}(GSF) if
\begin{enumerate}
\item $f:X\ra Y$ is a set-theoretical function.
\item There exists a net $(f_{\eps})\in\Coo(\R^{n},\R^{d})^{(0,1]}$ such
that for all $[x_{\eps}]\in X$:
\begin{enumerate}[label=(\alph*)]
\item $f(x)=[f_{\eps}(x_{\eps})]$
\item $\forall\alpha\in\N^{n}:\ (\partial^{\alpha}f_{\eps}(x_{\eps}))\text{ is }\rho-\text{moderate}$.
\end{enumerate}
\end{enumerate}
For generalized smooth functions lots of results hold: closure with
respect to composition, embedding of Schwartz's distributions, differential
calculus, one-di\-men\-sio\-nal integral calculus using primitives,
classical theorems (intermediate value, mean value, Taylor, extreme
value, inverse and implicit function), multidimensional integration,
Banach fixed point theorem, a Picard-Lindelöf theorem for both ODE
and PDE, several results of calculus of variations, etc.

In particular, we have the following (see also \cite{GK13b} for the
particular case of Colombeau generalized functions):
\end{defn}

\begin{thm}
\label{thm:FR-forGSF} Let $U\subseteq\rcrho$ be a sharply open set
and let $f:U\ra\rcrho$ be a GSF defined by the net of smooth functions
$f_{\eps}\in\cinfty(\R,\R)$. Then
\begin{enumerate}
\item \label{enu:existenceRatio}There exists an open neighbourhood $T$
of $U\times\{0\}$ and a GSF $R_{f}:T\to\rcrho$, called the \emph{generalized
incremental ratio} of $f$, such that 
\begin{equation}
f(x+h)=f(x)+h\cdot R_{f}(x,h)\qquad\forall(x,h)\in T.\label{eq:FR}
\end{equation}
Moreover $R_{f}(x,0)=\left[f'_{\eps}(x_{\eps})\right]=f'(x)$ is another
GSF and we can hence recursively define $f^{(k)}(x)$.
\item \label{enu:uniquenessRatio}Any two generalized incremental ratios
of $f$ coincide on the intersection of their domains.
\item More generally, for all $k\in\N_{>0}$ there exists an open neighbourhood
$T$ of $U\times\{0\}$ and a GSF $R_{f}^{k}:T\to\rcrho$, called
\emph{$k$-th order Taylor ratio }of $f$, such that
\begin{equation}
f(x+h)=\sum_{j=0}^{k-1}\frac{f^{(j)}(x)}{j!}h^{j}+R_{f}^{k}(x,h)\cdot h^{k}\qquad\forall(x,h)\in T.\label{eq:Peano}
\end{equation}
Any two ratios of $f$ of the same order coincide on the intersection
of their domains.
\end{enumerate}
\end{thm}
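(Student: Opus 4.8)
The plan is to push everything down to the smooth representatives $f_{\eps}\in\cinfty(\R,\R)$, where the classical fundamental theorem of calculus gives, for every $\eps$ and all $x_{\eps},h_{\eps}\in\R$,
\[
f_{\eps}(x_{\eps}+h_{\eps})=f_{\eps}(x_{\eps})+h_{\eps}\int_{0}^{1}f'_{\eps}(x_{\eps}+t h_{\eps})\,\mathrm{d}t.
\]
I would therefore take as candidate incremental ratio the net $r_{\eps}(x,h):=\int_{0}^{1}f'_{\eps}(x+th)\,\mathrm{d}t\in\cinfty(\R^{2},\R)$ and set $R_{f}(x,h):=[r_{\eps}(x_{\eps},h_{\eps})]$. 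For the domain, since $U$ is sharply open I would choose a net $(U_{\eps})$ of open subsets of $\R$ such that $\sint{U_{\eps}}$ is a sharply open neighbourhood (of the relevant points of $U$) on which the moderateness estimates for the derivatives $(\partial^{\alpha}f_{\eps})$ hold, and then put
\[
T_{\eps}:=\left\{ (x,h)\in\R^{2}\mid x+th\in U_{\eps}\ \forall t\in[0,1]\right\},\qquad T:=\sint{T_{\eps}}.
\]
By Thm.~\ref{thm:strongMembershipAndDistanceComplement}.\ref{enu:stronglyIntAreOpen} the set $T$ is sharply open, and the segment condition guarantees that it is a neighbourhood of $U\times\{0\}$ with $x+h\in U$ for all $(x,h)\in T$.

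The central step is to verify that $(r_{\eps})$ defines a GSF on $T$, i.e.\ that $[r_{\eps}(x_{\eps},h_{\eps})]$ is a well-defined element of $\rcrho$ and that all partial derivatives $\partial^{\alpha}r_{\eps}$ are $\rho$-moderate along the representatives of points of $T$. Moderateness follows by differentiating under the integral sign: each $\partial^{\alpha}r_{\eps}(x_{\eps},h_{\eps})$ is the integral over $t\in[0,1]$ of a fixed-order derivative of $f_{\eps}$ evaluated at $x_{\eps}+th_{\eps}$, multiplied by a polynomial in $t$ and $h_{\eps}$; since the whole segment $x_{\eps}+th_{\eps}$ lies in $U_{\eps}$ whenever $(x_{\eps},h_{\eps})\in T_{\eps}$, the moderateness bounds coming from the GSF property of $f$ apply uniformly in $t$, and integrating over $[0,1]$ preserves the estimate. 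Well-definedness is the delicate point: if $(x'_{\eps})\sim_{\rho}(x_{\eps})$ and $(h'_{\eps})\sim_{\rho}(h_{\eps})$, the mean value theorem bounds $|r_{\eps}(x'_{\eps},h'_{\eps})-r_{\eps}(x_{\eps},h_{\eps})|$ by $\sup|\nabla r_{\eps}|$ along the connecting segment times $|x'_{\eps}-x_{\eps}|+|h'_{\eps}-h_{\eps}|$, where the supremum is moderate by the previous estimate and the increments are negligible, so the difference is negligible. Granting this, equation~\eqref{eq:FR} is just the bracket of the classical identity, and evaluating at $h=0$ gives $R_{f}(x,0)=[\int_{0}^{1}f'_{\eps}(x_{\eps})\,\mathrm{d}t]=[f'_{\eps}(x_{\eps})]=f'(x)$, the GSF defined by $(f'_{\eps})$; iterating this construction yields $f^{(k)}$.

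For the uniqueness statement~\ref{enu:uniquenessRatio}, if $R_{f}$ and another ratio $\tilde{R}_{f}$ both satisfy~\eqref{eq:FR} on the intersection of their domains, then $h\cdot(R_{f}(x,h)-\tilde{R}_{f}(x,h))=0$ there, so for every $(x,h)$ with $h$ invertible we get $R_{f}(x,h)=\tilde{R}_{f}(x,h)$. The invertible elements are sharply dense: given $[h_{\eps}]$ and $q\in\N$, replacing $h_{\eps}$ by $\rho_{\eps}^{q}$ on those indices where $|h_{\eps}|<\rho_{\eps}^{q}$ produces an invertible number within $2\diff{\rho}^{q}$ of $h$. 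Hence $\{h\in\rcrho^{*}\}$ meets every nonempty sharply open subset, and since $R_{f},\tilde{R}_{f}$ are GSF and thus sharply continuous, they must agree on all of the common domain, in particular at $h=0$.

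Finally, part~(3) follows by the identical scheme, starting from the classical Taylor formula with integral remainder
\[
f_{\eps}(x_{\eps}+h_{\eps})=\sum_{j=0}^{k-1}\frac{f_{\eps}^{(j)}(x_{\eps})}{j!}h_{\eps}^{j}+h_{\eps}^{k}\int_{0}^{1}\frac{(1-t)^{k-1}}{(k-1)!}f_{\eps}^{(k)}(x_{\eps}+t h_{\eps})\,\mathrm{d}t
\]
and setting $R_{f}^{k}(x,h):=\left[\int_{0}^{1}\frac{(1-t)^{k-1}}{(k-1)!}f_{\eps}^{(k)}(x_{\eps}+t h_{\eps})\,\mathrm{d}t\right]$; the GSF verification and the uniqueness argument (via invertibility of $h^{k}$ and density of $\rcrho^{*}$) are verbatim the same, yielding~\eqref{eq:Peano}. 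Throughout, the main obstacle is precisely this GSF verification of the ratio net — concretely, securing the moderateness bounds on the derivatives of $f_{\eps}$ \emph{uniformly} along the entire integration segment $x_{\eps}+th_{\eps}$, $t\in[0,1]$, which is exactly what forces the segment condition $x+th\in U_{\eps}$ in the definition of $T$.
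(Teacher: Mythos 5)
The paper itself does not prove this theorem: it is recalled from the GSF literature (the references \cite{GK13b} and \cite{TI} cited in the surrounding text), so there is no internal proof to compare against. Your route --- the integral form $R_{f}(x,h)=\left[\int_{0}^{1}f'_{\eps}(x_{\eps}+th_{\eps})\,\mathrm{d}t\right]$ on a thickening $T$ defined by the segment condition, uniqueness via density of the invertible elements in the sharp topology combined with sharp continuity of GSF, and the Taylor integral remainder for part (3) --- is exactly the standard proof in those references, and the overall architecture is sound; in particular your density argument for $\rcrho^{*}$ and the reduction of uniqueness to the identity $h\cdot(R_{f}-\tilde{R}_{f})=0$ are correct.

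There is, however, one step you assert rather than prove, and you yourself identify it as the crux: that the moderateness bounds for $\partial^{\beta}f_{\eps}$ ``apply uniformly in $t$'' along the integration segment. The definition of GSF only gives moderateness of the net $(\partial^{\beta}f_{\eps}(y_{\eps}))$ at each \emph{individual} generalized point $[y_{\eps}]\in U$; it does not directly yield $\sup_{t\in[0,1]}|\partial^{\beta}f_{\eps}(x_{\eps}+th_{\eps})|\le\rho_{\eps}^{-N}$, which is what you need to estimate the integral and its derivatives. To close this you need the standard diagonal argument: if the supremum were not moderate, choose $\eps_{k}\downarrow0$ and $t_{k}\in[0,1]$ with $|\partial^{\beta}f_{\eps_{k}}(x_{\eps_{k}}+t_{k}h_{\eps_{k}})|>\rho_{\eps_{k}}^{-k}$, and define $y_{\eps}:=x_{\eps}+t_{k}h_{\eps}$ for $\eps=\eps_{k}$ and $y_{\eps}:=x_{\eps}$ otherwise; the segment condition in $T_{\eps}$ guarantees $[y_{\eps}]\in\sint{U_{\eps}}\subseteq U$ for \emph{any} choice of parameters $t_{\eps}\in[0,1]$ (by Thm.~\ref{thm:strongMembershipAndDistanceComplement}.\ref{enu:stronglyIntSetsDistance}), so moderateness of $f$ at $[y_{\eps}]$ contradicts the blow-up. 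With this lemma in place, differentiation under the integral sign, your mean-value estimate for well-definedness, and the rest of the argument go through. A smaller point: to see that $T$ is a neighbourhood of \emph{all} of $U\times\{0\}$ (not just of ``the relevant points''), say explicitly that $T$ contains, for each $x\in U$ and each $r$ with $B_{r}(x)\subseteq U$, the product $B_{r/2}(x)\times B_{r/2}(0)$, whose points satisfy the segment condition; one may then take $T$ to be the union of these products, or check that your $\sint{T_{\eps}}$ contains it.
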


\noindent We can now prove the following generalization of one of
L'Hôpital rule:
\begin{thm}
\label{thm:LHopital}Let $U\subseteq\rti$ be a sharply open set $(x_{n})_{n}$,
$(y_{n})_{n}:\hypNs\ra U$ be hypersequences converging to $l\in U$
and $m\in U$ respectively and such that
\[
\hyperlim{\rho}{\sigma}\frac{x_{n}-l}{y_{n}-m}=:C\in\rcrho.
\]
Let $k\in\N_{>0}$ and $f$, $g:U\ra\rcrho$ be GSF such that for
all $n\in\hypNs$ and all $j=0,\ldots,k-1$ 
\begin{align}
 & g^{(j)}(y_{n})\in\rcrho^{*}\nonumber \\
 & f^{(j)}(l)=g^{(j)}(m)=0\label{eq:zero}\\
 & g^{(k)}(m)\in\rcrho^{*}\nonumber 
\end{align}
Then for all $j=0,\ldots,k-1$
\[
\exists\,\hyperlim{\rho}{\sigma}\frac{f^{(j)}(x_{n})}{g^{(j)}(y_{n})}=C^{k}\cdot\hyperlim{\rho}{\sigma}\frac{f^{(k)}(x_{n})}{g^{(k)}(y_{n})}.
\]
\end{thm}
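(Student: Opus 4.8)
The plan is to reduce the statement to the higher-order Taylor expansions of $f^{(j)}$ and $g^{(j)}$ furnished by Thm.~\ref{thm:FR-forGSF}, combined with the hyper-sequential continuity of GSF (Thm.~\ref{thm:cont}) and the algebra of hyperlimits. First I would record the two preliminary facts I will use repeatedly: each derivative $f^{(j)}$, $g^{(j)}$ is again a GSF (Thm.~\ref{thm:FR-forGSF}.\ref{enu:existenceRatio}, applied recursively), and the Taylor ratio of a GSF $F$ of order $p$ satisfies the value-at-zero identity $R_{F}^{p}(x,0)=\frac{F^{(p)}(x)}{p!}$. This identity, stated in Thm.~\ref{thm:FR-forGSF} only for $p=1$ (where $R_{f}(x,0)=f'(x)$), follows for general $p$ by subtracting the order-$p$ and order-$(p+1)$ expansions \eqref{eq:Peano}, which gives $R_{F}^{p}(x,h)=\frac{F^{(p)}(x)}{p!}+R_{F}^{p+1}(x,h)\cdot h$ for invertible $h$, and then letting $h\to0$ through invertibles (e.g.~$h=\diff{\rho}^{q}\to0$) using sharp continuity of $R_{F}^{p}$; alternatively it can be read off $\eps$-wise from the integral remainder of the defining net $f_{\eps}$.

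Next, I would fix $j\in\{0,\dots,k-1\}$ and set $p:=k-j\ge1$. Applying \eqref{eq:Peano} to $f^{(j)}$ at base point $l$ with increment $x_{n}-l$, and to $g^{(j)}$ at $m$ with increment $y_{n}-m$, the entire polynomial parts vanish, since $(f^{(j)})^{(i)}(l)=f^{(j+i)}(l)=0$ and $(g^{(j)})^{(i)}(m)=g^{(j+i)}(m)=0$ for $0\le i\le p-1$ (all indices $j+i$ lie in $\{0,\dots,k-1\}$, where the hypotheses \eqref{eq:zero} apply). Since the domains of the relevant Taylor ratios are open neighbourhoods of $U\times\{0\}$ and $x_{n}\to l$, $y_{n}\to m$, for $n$ large the evaluation points $(l,x_{n}-l)$, $(m,y_{n}-m)$ lie in those domains, and I obtain
\[
f^{(j)}(x_{n})=R_{f^{(j)}}^{p}(l,x_{n}-l)\,(x_{n}-l)^{p},\qquad g^{(j)}(y_{n})=R_{g^{(j)}}^{p}(m,y_{n}-m)\,(y_{n}-m)^{p}.
\]
From $g^{(j)}(y_{n})\in\rcrho^{*}$ and commutativity of $\rcrho$ (a product is a unit iff each factor is), both right-hand factors are invertible; in particular $y_{n}-m\in\rcrho^{*}$, so $\frac{x_{n}-l}{y_{n}-m}$ is well defined, consistently with the hypothesis defining $C$, and I may write
\[
\frac{f^{(j)}(x_{n})}{g^{(j)}(y_{n})}=\frac{R_{f^{(j)}}^{p}(l,x_{n}-l)}{R_{g^{(j)}}^{p}(m,y_{n}-m)}\left(\frac{x_{n}-l}{y_{n}-m}\right)^{p}.
\]

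To pass to the hyperlimit I would invoke Thm.~\ref{thm:cont}: since $x_{n}\to l$ and $y_{n}\to m$, hyper-sequential continuity of the one-variable GSF $R_{f^{(j)}}^{p}(l,\cdot)$ and $R_{g^{(j)}}^{p}(m,\cdot)$ at $0$, together with the value-at-zero identity, yields
\[
\hyperlim{\rho}{\sigma}R_{f^{(j)}}^{p}(l,x_{n}-l)=\frac{f^{(k)}(l)}{p!},\qquad \hyperlim{\rho}{\sigma}R_{g^{(j)}}^{p}(m,y_{n}-m)=\frac{g^{(k)}(m)}{p!}.
\]
Because $g^{(k)}(m)\in\rcrho^{*}$ and invertibility is an open condition in the sharp topology, the denominator net is eventually invertible, so the quotient and product/power rules for hyperlimits apply; with $\frac{x_{n}-l}{y_{n}-m}\to C$ this gives $\hyperlim{\rho}{\sigma}\frac{f^{(j)}(x_{n})}{g^{(j)}(y_{n})}=\frac{f^{(k)}(l)}{g^{(k)}(m)}\,C^{\,k-j}$. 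Applying Thm.~\ref{thm:cont} once more to $f^{(k)}$, $g^{(k)}$ gives $\hyperlim{\rho}{\sigma}\frac{f^{(k)}(x_{n})}{g^{(k)}(y_{n})}=\frac{f^{(k)}(l)}{g^{(k)}(m)}$, whence the asserted identity emerges with the factor $C^{\,k-j}$ (which is exactly $C^{k}$ in the case $j=0$, and reduces to the single-step relation $\hyperlim{\rho}{\sigma}\frac{f^{(j)}(x_{n})}{g^{(j)}(y_{n})}=C\cdot\hyperlim{\rho}{\sigma}\frac{f^{(j+1)}(x_{n})}{g^{(j+1)}(y_{n})}$ upon iteration).

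The main obstacle I anticipate is not the Taylor algebra but the \emph{invertibility bookkeeping}: one must certify that every quantity we divide by — namely $g^{(j)}(y_{n})$, the remainder factor $R_{g^{(j)}}^{p}(m,y_{n}-m)$, its limit $\frac{g^{(k)}(m)}{p!}$, and $y_{n}-m$ itself — is (eventually) a unit of $\rcrho$, so that the quotient rule for hyperlimits is genuinely licensed. The clean route is to derive all of these from the two hypotheses $g^{(j)}(y_{n})\in\rcrho^{*}$ and $g^{(k)}(m)\in\rcrho^{*}$ via commutativity and the openness of $\rcrho^{*}$ in the sharp topology. The only other delicate point is the general-order value-at-zero identity for the Taylor ratio, handled by the telescoping argument (or the $\eps$-wise integral remainder) indicated in the first paragraph.
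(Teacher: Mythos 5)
Your proof is correct and follows essentially the same route as the paper's: expand $f^{(j)}$ and $g^{(j)}$ via the Taylor ratio \eqref{eq:Peano}, annihilate the polynomial part using \eqref{eq:zero}, factor out the power of $\frac{x_{n}-l}{y_{n}-m}$, and pass to the limit by sharp continuity of the ratios and of $f^{(k)}$, $g^{(k)}$ (the paper treats $j=0$ and then substitutes $f^{(j)}$, $g^{(j)}$ for $f$, $g$, which is the same computation you carry out directly at order $p=k-j$). The exponent $C^{k-j}$ you obtain for $j\ge1$ is also what the paper's own reduction yields, since replacing $f$, $g$ by $f^{(j)}$, $g^{(j)}$ replaces $k$ by $k-j$; the uniform factor $C^{k}$ in the printed statement appears to be a typo, and your explicit invertibility bookkeeping and the value-at-zero identity $R_{F}^{p}(x,0)=F^{(p)}(x)/p!$ (used implicitly in the paper, with the factorials cancelling in the quotient) are welcome additions rather than deviations.
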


\begin{proof}
Using \eqref{eq:Peano} and \eqref{eq:zero}, we can write
\begin{multline*}
\frac{f(x_{n})}{g(y_{n})}=\frac{\sum_{j=0}^{k-1}\frac{f^{(j)}(l)}{j!}(x_{n}-l)^{j}+(x_{n}-l)^{k}R_{f}^{k}(l,x_{n}-l)}{\sum_{j=0}^{k-1}\frac{g^{(j)}(m)}{j!}(y_{n}-m)^{j}+(y_{n}-m)^{k}R_{g}^{k}(m,y_{n}-m)}=\\
=\left(\frac{x_{n}-l}{y_{n}-m}\right)^{k}\cdot\frac{R_{f}^{k}(l,x_{n}-l)}{R_{g}^{k}(m,y_{n}-m)}.
\end{multline*}
Since $R_{f}^{k}$ and $R_{g}^{k}$ are GSF, they are sharply continuous.
Therefore, the right hand side of the previous equality tends to $C^{k}\cdot\frac{R_{f}^{k}(l,0)}{R_{g}^{k}(m,0)}=C^{k}\cdot\frac{f^{(k)}(l)}{g^{(k)}(m)}$.
At the same limit converges the quotient $C^{k}\frac{f^{(k)}(x_{n})}{g^{(k)}(y_{n})}$
because $f^{(k)}$ and $g^{(k)}$ are also GSF and hence they are
sharply continuous. The claim for $j=1,\ldots,k-1$ follows by applying
the conclusion for $j=0$ with $f^{(j)}$ and $g^{(j)}$ instead of
$f$ and $g$.
\end{proof}
\noindent Note that for $x_{n}=y_{n}$, $l=m$, we have $C=1$ and
we get the usual L'Hôpital rule (formulated using hypersequences).
Note that a similar theorem can also be proved without hypersequences
and using the same Taylor expansion argument as in the previous proof.

\subsection{Cauchy criterion and monotonic hypersequences.}

In this section, we deal with classical criteria implying the existence
of a hyperlimit.
\begin{defn}
We say that $(x_{n})_{n\in\hyperN{\sigma}}$ is a \emph{Cauchy hypersequence}
if
\end{defn}

\[
\text{\ensuremath{\forall}}q\in\mathbb{N}\,\exists M\in\hyperN{\sigma}\,\text{\ensuremath{\forall}}n,m\in\hyperN{\sigma}_{\ge M}:\ |x_{n}-x_{m}|<\diff{\rho^{q}}.
\]

\begin{thm}
\label{thm:Cauchy}A hypersequence converges if and only if it is
a Cauchy hypersequence
\end{thm}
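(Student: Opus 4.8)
The plan is to prove the two implications separately; the forward implication is routine, while the converse is the substantive one and reduces to the already-recalled completeness of $\rcrho$ for ordinary $\N$-indexed sequences. For the easy direction (convergent $\Rightarrow$ Cauchy), suppose $\exists\,\hyperlim{\rho}{\sigma}x_{n}=:l$. Given $q\in\N$, I would apply the definition of hyperlimit with exponent $q+1$ to obtain $M\in\hypNs$ with $|x_{n}-l|<\diff{\rho}^{q+1}$ for all $n\in\hypNs_{\ge M}$. Then for $n,m\in\hypNs_{\ge M}$ the triangle inequality for the generalized norm gives $|x_{n}-x_{m}|\le|x_{n}-l|+|l-x_{m}|<2\diff{\rho}^{q+1}<\diff{\rho}^{q}$, the last inequality because $2\diff{\rho}<1$ ($\diff{\rho}$ being a positive infinitesimal). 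This is exactly the Cauchy condition.

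The heart of the proof is the converse. First I would turn the thresholds into a countable sample: for each $q\in\N$ the Cauchy hypothesis yields $M_{q}\in\hypNs$ with $|x_{n}-x_{m}|<\diff{\rho}^{q}$ for all $n,m\in\hypNs_{\ge M_{q}}$, and replacing $M_{q}$ by $M_{1}\vee\cdots\vee M_{q}$ (a finite supremum of hypernaturals, again in $\hypNs$) I may assume the $M_{q}$ are non-decreasing, so that $M_{p}\ge M_{q}$ whenever $p\ge q$. Define the ordinary sequence $a_{q}:=x_{M_{q}}\in\rcrho$ for $q\in\N$. The point of this step, and the conceptual core of the argument, is that it converts the hypersequence-indexed Cauchy condition into a genuine $\N$-indexed one: for $p\ge q$ both $M_{p},M_{q}\ge M_{q}$, hence $|a_{p}-a_{q}|=|x_{M_{p}}-x_{M_{q}}|<\diff{\rho}^{q}=\diff{\rho}^{\min(p,q)}$, and symmetrically for $p<q$, so $(a_{q})_{q\in\N}$ is Cauchy in the classical sense.

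Next I would invoke the fact, recalled in Section~\ref{subsec:Topologies}, that the sharp topology on $\rcrho$ is Cauchy complete for $\N$-indexed sequences, obtaining $l\in\rcrho$ with $a_{q}\to l$ as $q\to+\infty$. It then remains to check $\hyperlim{\rho}{\sigma}x_{n}=l$. Fixing $q\in\N$, I would choose $p\ge q+1$ large enough that also $|a_{p}-l|<\diff{\rho}^{q+1}$, and set $M:=M_{p}\in\hypNs$; for any $n\in\hypNs_{\ge M}$ we have $n\ge M_{p}$, so $|x_{n}-a_{p}|=|x_{n}-x_{M_{p}}|<\diff{\rho}^{p}\le\diff{\rho}^{q+1}$, whence $|x_{n}-l|\le|x_{n}-a_{p}|+|a_{p}-l|<2\diff{\rho}^{q+1}<\diff{\rho}^{q}$, which is precisely the definition of $l$ being the hyperlimit. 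The only place demanding care is the reduction step: one must be sure the sampled sequence is indexed by $\N$, so that classical completeness applies, and that the thresholds can be made monotone inside $\hypNs$; both are guaranteed because $\hypNs$ is a directed set closed under finite suprema. After that the whole argument is just the triangle inequality together with $\diff{\rho}$ being a positive infinitesimal, and I expect no further obstacle, since uniqueness of the hyperlimit (hence of $l$) is already available from the Hausdorff property.
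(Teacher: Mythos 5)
Your proof is correct and follows essentially the same route as the paper's: sample the hypersequence at the (monotonized) Cauchy thresholds $M_{q}$ to obtain an ordinary $\N$-indexed Cauchy sequence, invoke the Cauchy completeness of $\rcrho$ in the sharp topology, and transfer the limit back via the triangle inequality. The paper's recursive definition $h_{q+1}:=M_{q+1}\vee h_{q}$ is exactly your monotonization step, so there is no substantive difference.
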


\begin{proof}
To prove that the Cauchy criterion is a necessary condition it suffices
to consider the inequalities:
\[
|x_{n}-x_{m}|\le|x_{n}-l|+|x_{m}-l|\le\diff{\rho^{q+1}}+\diff{\rho^{q+1}}<\diff{\rho}^{q}
\]
Vice versa, assume that
\begin{equation}
\text{\ensuremath{\forall}}q\in\mathbb{N}\,\exists M_{q}\in\hyperN{\sigma}\,\text{\ensuremath{\forall}}n,m\in\hyperN{\sigma}_{\ge M_{q}}:\ |x_{n}-x_{m}|<\diff{\rho^{q}}.\label{eq:Cauchy}
\end{equation}
The idea is to use Cauchy completeness of $\rcrho$. In fact, set
$h_{1}:=M_{1}$ and $h_{q+1}:=M_{q+1}\vee h_{q}$. We claim that $(x_{h_{q}})_{q\in\mathbb{N}}$
is a standard Cauchy sequence converging to the same limit of $(x_{n})_{n\in\hyperN{\sigma}}$
. From \eqref{eq:Cauchy} it follows that $(x_{h_{q}})_{q\in\mathbb{N}}$
is a standard Cauchy sequence (in the sharp topology). Therefore,
there exists $\bar{x}\in\rcrho$ such that $\lim_{q\to+\infty}x_{h_{q}}=\bar{x}$.
Now, fix $q\in\mathbb{N}$ and pick any $m\ge q+1$ such that 
\begin{equation}
|x_{h_{m}}-\bar{x}|<\diff{\rho^{q+1}}.\label{eq:CauchyConv}
\end{equation}
Then for all $N\ge M_{q+1}$ we have: 
\[
|x_{N}-\bar{x}|\le|x_{N}-x_{h_{m}}|+|x_{h_{m}}-\bar{x}|<2\diff{\rho^{q+1}}<\diff{\rho^{q}}
\]
because $h_{m}\ge h_{q+1}\ge M_{q+1}$ so that we can apply \eqref{eq:Cauchy}
and \eqref{eq:CauchyConv}.
\end{proof}
\begin{thm}
\label{thm:Cauchy_m_ge_n}A hypersequence converges if and only if
\[
\text{\ensuremath{\forall}}q\in\mathbb{N}\,\exists M\in\hyperN{\sigma}\,\text{\ensuremath{\forall}}n,m\in\hyperN{\sigma}_{\ge M}:m\ge n\ \Rightarrow\ |x_{n}-x_{m}|<\diff{\rho^{q}}.
\]
\end{thm}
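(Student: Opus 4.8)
The plan is to deduce this criterion from the full Cauchy criterion of Theorem~\ref{thm:Cauchy}, so that no new completeness argument is needed. The implication from convergence to the stated condition is immediate: a convergent hypersequence is a Cauchy hypersequence by Theorem~\ref{thm:Cauchy}, and the present condition is merely the restriction of that Cauchy property to pairs satisfying $m\ge n$, hence a weakening of it. All the work is therefore in the converse, and the goal is to show that the one-sided condition already forces the full two-sided Cauchy property.

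For the converse I would fix $q\in\N$ and apply the hypothesis with $q+1$ to obtain $M\in\hyperN{\sigma}$ such that $|x_n-x_m|<\diff{\rho}^{q+1}$ whenever $n,m\in\hyperN{\sigma}_{\ge M}$ and $m\ge n$. Then I take arbitrary $n,m\in\hyperN{\sigma}_{\ge M}$ and aim to prove $|x_n-x_m|<\diff{\rho}^{q}$.

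The main obstacle is that the order on $\hyperN{\sigma}$ is not total, so one cannot simply say ``without loss of generality $m\ge n$'' and exploit the symmetry of $|x_n-x_m|$: for incomparable $n$ and $m$ neither $m\ge n$ nor $n\ge m$ need hold. The device that circumvents this is to pass to the common upper bound $p:=n\vee m=[\max(n_\eps,m_\eps)]$. Since the maximum of two naturals is a natural and the defining nets are $\rho$-moderate, $p\in\hyperN{\sigma}$, and clearly $p\ge n\ge M$ and $p\ge m\ge M$. Because the hypersequence is a genuine function on $\hyperN{\sigma}$, the term $x_p$ is well defined and independent of representatives. Applying the one-sided hypothesis to the pair $(n,p)$ (legitimate since $p\ge n$) and to the pair $(m,p)$ (legitimate since $p\ge m$) yields $|x_n-x_p|<\diff{\rho}^{q+1}$ and $|x_m-x_p|<\diff{\rho}^{q+1}$.

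The argument then closes with the triangle inequality,
\[
|x_n-x_m|\le|x_n-x_p|+|x_p-x_m|<2\diff{\rho}^{q+1}<\diff{\rho}^{q},
\]
where the last step uses $2\diff{\rho}<1$ since $\diff{\rho}$ is a positive infinitesimal. As $n,m\in\hyperN{\sigma}_{\ge M}$ and $q\in\N$ were arbitrary, this establishes the full Cauchy criterion, and Theorem~\ref{thm:Cauchy} then delivers convergence.
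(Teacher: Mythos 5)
Your proposal is correct and follows essentially the same route as the paper: the paper's entire proof is the single line $|x_{n}-x_{m}|\le|x_{n}-x_{n\vee m}|+|x_{n\vee m}-x_{m}|$, i.e.\ exactly your device of passing to the common upper bound $n\vee m$ and invoking the triangle inequality to reduce to the full Cauchy criterion of Thm.~\ref{thm:Cauchy}. Your write-up merely makes explicit the details (why $n\vee m\in\hyperN{\sigma}$, the $q+1$ bookkeeping) that the paper leaves to the reader.
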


\begin{proof}
It suffices to apply the inequality $\left|x_{n}-x_{m}\right|\le\left|x_{n}-x_{n\vee m}\right|+\left|x_{n\vee m}-x_{m}\right|$.
\end{proof}
The second classical criterion for the existence of a hyperlimit is
related to the notion of monotonic hypersequence. The existence of
several chains in $\hypNs$ does not allow to arrive at any $M\in\hypNs$
starting from any other lower $N\in\hypNs$ and using the successor
operation only a finite number of times. For this reason, the following
is the most natural notion of monotonic hypersequence:
\begin{defn}
\label{def:monotonicity}We say that $(x_{n})_{N\in\hyperN{\sigma}}$
is a \emph{non-decreasing} (or \emph{increasing}) hypersequence if
\[
\text{\ensuremath{\forall}}n,m\in\hypNs:\ n\ge m\ \Rightarrow\ x_{n}\ge x_{m}.
\]
Similarly, we can define the notion of non-increasing (decreasing)
hypersequence.
\end{defn}

\begin{thm}
\label{thm:monotonicHyperseq}Let $(x_{n})_{n}:\hypNs\ra\rcrho$ be
a non-decreasing hypersequence. Then
\[
\exists\,\hyperlim{\rho}{\sigma}x_{n}\ \iff\ \exists\,\sup\left\{ x_{n}\mid n\in\hypNs\right\} ,
\]
and in that case they are equal. In particular, if $\left\{ x_{n}\mid n\in\hypNs\right\} $
is complete from above for all the upper bounds, then
\[
\exists\,\hyperlim{\rho}{\sigma}x_{n}\ \iff\ \exists U\in\rcrho\,\forall n\in\hypNs:\ x_{n}\le U.
\]
\end{thm}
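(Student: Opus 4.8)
The plan is to translate both the definition of hyperlimit (Def.~\ref{def:hyperlimit}) and the characterization of the sharp supremum (Rem.~\ref{rem:sup}.\ref{rem:sharpSupremum}, i.e.\ condition \ref{enu:1defSupbis} together with \eqref{eq:2defSup}) into the common language of the powers $\diff{\rho}^{q}$, and then to exploit monotonicity to pass freely between a single term $x_{M}$ and all later terms. Throughout, write $S:=\left\{ x_{n}\mid n\in\hypNs\right\}$.

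For the direction $\exists\,\hyperlim{\rho}{\sigma}x_{n}=:l\ \Rightarrow\ \exists\,\sup(S)=l$, I would first check that $l$ is an upper bound of $S$. Fix $n_{0}\in\hypNs$ and $q\in\N$, take $M\in\hypNs$ from Def.~\ref{def:hyperlimit}, and set $n:=M\vee n_{0}\in\hypNs$ (using that $\hypNs$ is directed and closed under $\vee$, so $n\ge M$ and $n\ge n_{0}$). Monotonicity gives $x_{n_{0}}\le x_{n}<l+\diff{\rho}^{q}$, hence $x_{n_{0}}\le l+\diff{\rho}^{q}$ for every $q\in\N$; the passage to $x_{n_{0}}\le l$ then follows since $x_{n_{0}}-l\le\diff{\rho}^{q}$ for all $q$ forces $x_{n_{0}}-l\le0$ (otherwise $x_{n_{0}}-l\sbpt{>}0$ by Lem.~\ref{lem:negationsSubpoints}.\ref{enu:neg-le}, so $x_{n_{0}}-l>_{L}\diff{\rho}^{m}$ for some $m$ by Lem.~\ref{lem:mayer} applied to $\rcrho|_{L}$, contradicting the inequality at $q=m$). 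Condition \eqref{eq:2defSup} is then immediate: for each $q$ the same $M$ yields $\bar{s}:=x_{M}\in S$ with $l-\diff{\rho}^{q}<x_{M}$, since $|x_{M}-l|<\diff{\rho}^{q}$.

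For the converse, assume $\sigma:=\sup(S)$ exists. Given $q\in\N$, apply \eqref{eq:2defSup} at $q+1$ to obtain $\bar{s}=x_{M}\in S$ with $\sigma-\diff{\rho}^{q+1}\le x_{M}$. By monotonicity and the upper bound property of $\sigma$, for all $n\in\hypNs_{\ge M}$ we have $\sigma-\diff{\rho}^{q+1}\le x_{M}\le x_{n}\le\sigma$, whence $|x_{n}-\sigma|\le\diff{\rho}^{q+1}<\diff{\rho}^{q}$; this is exactly Def.~\ref{def:hyperlimit} with $l=\sigma$. Thus the two notions coincide whenever either exists, which proves the first equivalence. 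The final ``in particular'' claim then follows by feeding this equivalence into Cor.~\ref{cor:existenceSupCFA}: under completeness from above (for all upper bounds), the existence of $\sup(S)$ is equivalent to $S$ admitting an upper bound.

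The main obstacle is the non-Archimedean step ``$x_{n_{0}}\le l+\diff{\rho}^{q}$ for all $q\ \Rightarrow\ x_{n_{0}}\le l$'', which cannot be settled by a naive limit and instead requires the subpoint dichotomy of Lem.~\ref{lem:negationsSubpoints}--\ref{lem:mayer}; everything else is careful but routine bookkeeping with Def.~\ref{def:hyperlimit} and \eqref{eq:2defSup}, relying on the fact that $\hypNs$ is directed and closed under $\vee$ so that $M\vee n_{0}$ is a legitimate hypernatural dominating both $M$ and $n_{0}$.
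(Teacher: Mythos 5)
Your proposal is correct and follows essentially the same route as the paper's proof: both directions are handled by combining Def.~\ref{def:hyperlimit} with the characterization \eqref{eq:2defSup} of the sharp supremum, using $n\vee M$ and monotonicity to pass from one term to all later ones. The only differences are that you spell out the non-Archimedean step ``$x_{n_{0}}\le l+\diff{\rho}^{q}$ for all $q$ implies $x_{n_{0}}\le l$'' via Lem.~\ref{lem:negationsSubpoints} and Lem.~\ref{lem:mayer} (which the paper leaves implicit) and that you explicitly derive the final claim from Cor.~\ref{cor:existenceSupCFA}, both of which are fine.
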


\begin{proof}
Assume that $(x_{n})_{n\in\hyperN{\sigma}}$ converges to $l$ and
set $S:=\{x_{n}\mid n\in\hypNs\}$, we will show that $l=\sup(S)$.
Now, using Def.~\ref{def:hyperlimit}, we have that $\forall n\in\hypNs_{\ge N}:\ x_{n}<l+\diff{\rho}^{q}$
for some $N\in\hypNs$. But from Def.~\ref{def:monotonicity} $\forall n\in\hypNs:\ x_{n}\le x_{n\vee N}<l+\diff{\rho}^{q}$.
Therefore $x_{n}\le l+\diff{\rho}^{q}$ for all $n\in\hypNs$, and
the conclusion $x_{n}\le l$ follows since $q\in\N$ is arbitrary.
Finally, from Def.~\ref{def:hyperlimit} of hyperlimit, for all $q\in\N$
we have the existence of $L\in\hyperN{\sigma}$ such that $l-\diff{\rho^{q}}<x_{L}\in S$
which completes the necessity part of the proof. Now, assume that
$\exists\,\sup(S)=:l$. We have to prove that $\hyperlim{\rho}{\sigma}x_{n}=l$.
In fact, using Rem.~\ref{rem:sharpSupremum}, we get
\[
\forall q\in\mathbb{N}\,\exists x_{N}\in S:\ l-\diff{\rho^{q}}<x_{N},
\]
and $x_{N}\le x_{n}\le l<l+\diff{\rho^{q}}$ for all $n\in\hypNs_{\ge N}$
by Def.~\ref{def:monotonicity} of monotonicity. That is, $\left|l-x_{n}\right|=x_{n}-l<\diff{\rho}^{q}$.
\end{proof}
\begin{example}
\label{exa:rootsInf}The hypersequence $x_{n}:=\diff{\rho}^{1/n}$
is non-decreasing. Assume that $(x_{n})_{n}$ converges to $l$ and
that $\sigma\le\rho^{R}$ for some $R\in\R_{>0}$. Since $x_{n}\ge\diff{\rho}$,
by Thm.~\ref{thm:closed}, we get $l\ge\diff{\rho}$. Therefore,
applying the logarithm and the exponential functions, from Thm.~\ref{thm:cont}
we obtain that $l=1$ because from $\sigma\le\rho^{R}$ it follows
that $\hyperlim{\rho}{\sigma}\frac{\log(\diff{\rho})}{n}=0$. But
this is impossible since $1\approx1-\diff{\rho}\nleq\diff{\rho}^{1/n}$.
Thereby, $\nexists\,\sup\left\{ \diff{\rho}^{1/n}\mid n\in\hypNs_{>0}\right\} $
and this set is also not complete from above.
\end{example}

\section{Limit superior and inferior}

We have two possibilities to define the notions of limit superior
and inferior in a non-Archimedean setting such as $\rcrho$: the first
one is to assume that both $\alpha_{m}:=\sup\{x_{n}\mid n\in\hypNs_{\ge m}\}$
and $\inf\{\alpha_{m}\mid m\in\hypNs\}$ exist (the former for all
$m\in\hypNs$); the second possibility is to use inequalities to avoid
the use of supremum and infimum. In fact, in the real case we have
$\iota\le\sup_{n\ge m}x_{n}\le\iota+\eps$ if and only if
\begin{align*}
 & \forall n\ge m:\ x_{n}\le\iota+\eps\\
 & \forall\eps\,\exists\bar{n}\ge m:\ \iota-\eps\le x_{\bar{n}}.
\end{align*}

\begin{defn}
\label{def:limsup-inf}Let $(x_{n})_{n}:\hypNs\ra\rcrho$ be an hypersequence,
then we say that $\iota\in\rcrho$ is the \emph{limit superior} of
$(x_{n})_{n}$ if
\begin{enumerate}
\item \label{enu:1limsup}$\forall q\in\N\,\exists N\in\hypNs\,\forall n\ge N:\ x_{n}\le\iota+\diff{\rho}^{q}$;
\item \label{enu:2limsup}$\forall q\in\N\,\forall N\in\hypNs\,\exists\bar{n}\ge N:\ \iota-\diff{\rho}^{q}\le x_{\bar{n}}$.
\end{enumerate}
\noindent Similarly, we say that $\sigma\in\rcrho$ is the \emph{limit
inferior} of $(x_{n})_{n}$ if
\begin{enumerate}[resume]
\item \label{enu:1liminf}$\forall q\in\N\,\exists N\in\hypNs\,\forall n\ge N:\ x_{n}\ge\sigma-\diff{\rho}^{q}$;
\item \label{enu:2liminf}$\forall q\in\N\,\forall N\in\hypNs\,\exists\bar{n}\ge N:\ \sigma+\diff{\rho}^{q}\ge x_{\bar{n}}$.
\end{enumerate}
\end{defn}

We have the following results (clearly, dual results hold for the
limit inferior):
\begin{thm}
\label{thm:limsup-inf}Let $(x_{n})_{n}$, $(y_{n})_{n}:\hypNs\ra\rcrho$
be hypersequences, then
\begin{enumerate}
\item \label{enu:limsupUnique}There exists at most one limit superior and
at most one limit inferior. They are denoted with $\hyplimsup{\rho}{\sigma}x_{n}$
and $\hypliminf{\rho}{\sigma}x_{n}$.
\item \label{enu:limsupClassicalDef}If $\exists\sup\left\{ x_{n}\mid n\in\hypNs_{\ge m}\right\} =:\alpha_{m}$
for all $m\in\hypNs$, then $\exists\,\hyplimsup{\rho}{\sigma}x_{n}$
if and only if $\exists\inf\left\{ \alpha_{m}\mid m\in\hypNs\right\} $,
and in that case
\[
\hyplimsup{\rho}{\sigma}x_{n}=\hyperlimarg{\rho}{\sigma}{m}\alpha_{m}=\inf\left\{ \alpha_{m}\mid m\in\hypNs\right\} .
\]
\item \label{enu:limsupLimInf}$\hyplimsup{\rho}{\sigma}(-x_{n})=-\hypliminf{\rho}{\sigma}x_{n}$
in the sense that if one of them exists, then also the other one exists
and in that case they are equal.
\item \label{enu:limsupLim}$\exists\hyperlim{\rho}{\sigma}x_{n}$ if and
only if $\exists\hyplimsup{\rho}{\sigma}x_{n}=\hypliminf{\rho}{\sigma}x_{n}$.
\item \label{enu:limsupSuperAdd}If $\exists\hyplimsup{\rho}{\sigma}x_{n}$,
$\hyplimsup{\rho}{\sigma}y_{n}$, $\hyplimsup{\rho}{\sigma}(x_{n}+y_{n})$,
then
\[
\hyplimsup{\rho}{\sigma}(x_{n}+y_{n})\le\hyplimsup{\rho}{\sigma}x_{n}+\hyplimsup{\rho}{\sigma}y_{n}.
\]
In particular, if $\forall N\in\hypNs\,\forall\bar{n},\hat{n}\ge N\,\exists n\ge N:\ x_{\bar{n}}+y_{\hat{n}}\le x_{n}+y_{n}$,
then the existence of the single limit superiors implies the existence
of the limit superior of the sum.
\item \label{enu:limsupSuperProd}If $x_{n}$, $y_{n}\ge0$ for all $n\in\hypNs$
and if $\exists\hyplimsup{\rho}{\sigma}x_{n}$, $\hyplimsup{\rho}{\sigma}y_{n}$,
$\hyplimsup{\rho}{\sigma}(x_{n}\cdot y_{n})$, then
\[
\hyplimsup{\rho}{\sigma}(x_{n}\cdot y_{n})\le\hyplimsup{\rho}{\sigma}x_{n}\cdot\hyplimsup{\rho}{\sigma}y_{n}.
\]
In particular, if $\forall N\in\hypNs\,\forall\bar{n},\hat{n}\ge N\,\exists n\ge N:\ x_{\bar{n}}\cdot y_{\hat{n}}\le x_{n}\cdot y_{n}$,
then the existence of the single limit superiors implies the existence
of the limit superior of the product.
\item \label{enu:limsupSubseqNec}If $\exists\,\hyplimsup{\rho}{\sigma}x_{n}=:\iota$,
then there exists a sequence $(\bar{n}_{q})_{q\in\N}$ of $\hypNs$
such that
\begin{enumerate}[label=(\alph*)]
\item \label{enu:limsupSub_a}$\bar{n}_{q+1}>\bar{n}_{q}$ for all $q\in\N$;
\item \label{enu:limsupSub_b}$\lim_{q\to+\infty}\bar{n}_{q}=+\infty$ in
$\RC{\sigma}$;
\item \label{enu:limsupSub_c}$\exists\lim_{q\to+\infty}x_{\bar{n}_{q}}=\iota$.
\end{enumerate}
\item \label{enu:limsupSubseqSuff}Assume to have a sequence $(\bar{n}_{q})_{q\in\N}$
satisfying the previous conditions \ref{enu:limsupSub_a}, \ref{enu:limsupSub_b},
\ref{enu:limsupSub_c} and
\begin{equation}
\forall n\in\hypNs\,\exists p\in\N:\ \bar{n}_{p}\ge n,\ x_{n}\le x_{\bar{n}_{p}}.\label{eq:seqSuff}
\end{equation}
Then $\exists\,\hyplimsup{\rho}{\sigma}x_{n}=:\iota$.
\end{enumerate}
\end{thm}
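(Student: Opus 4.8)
The plan is to read every item straight off Definition~\ref{def:limsup-inf}, whose two clauses for $\hyplimsup{\rho}{\sigma}$ are the non-Archimedean transcriptions of ``eventually $x_{n}\le\iota+\eps$'' and ``frequently $x_{n}\ge\iota-\eps$'', with $\eps$ replaced by the scale $\diff{\rho}^{q}$. For uniqueness~\ref{enu:limsupUnique} I would confront clause~\ref{enu:1limsup} for a candidate $\iota_{1}$ with clause~\ref{enu:2limsup} for a candidate $\iota_{2}$: picking $N$ from the former and $\bar{n}\ge N$ from the latter gives $\iota_{2}-\diff{\rho}^{q+1}\le x_{\bar{n}}\le\iota_{1}+\diff{\rho}^{q+1}$, hence $\iota_{2}\le\iota_{1}+\diff{\rho}^{q}$; by symmetry and arbitrariness of $q$, $\iota_{1}=\iota_{2}$. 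Item~\ref{enu:limsupLimInf} is the sign-flip carrying clauses~\ref{enu:1limsup},~\ref{enu:2limsup} into~\ref{enu:1liminf},~\ref{enu:2liminf}, and~\ref{enu:limsupLim} follows by superposing clause~\ref{enu:1limsup} of the limit superior with clause~\ref{enu:1liminf} of the limit inferior to bracket $|x_{n}-\iota|\le\diff{\rho}^{q+1}<\diff{\rho}^{q}$ for $n$ large, the converse being immediate from the definition of hyperlimit.

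For~\ref{enu:limsupClassicalDef} I would first note that $m\mapsto\{x_{n}\mid n\in\hypNs_{\ge m}\}$ is decreasing, so $(\alpha_{m})_{m}$ is non-increasing by Lemma~\ref{lem:propSupInf}.\ref{enu:incr}; the analogue of Theorem~\ref{thm:monotonicHyperseq} for non-increasing hypersequences then yields $\exists\inf_{m}\alpha_{m}\iff\exists\hyperlimarg{\rho}{\sigma}{m}\alpha_{m}$, with equal values. It remains to identify this value with $\hyplimsup{\rho}{\sigma}x_{n}$: writing $\iota:=\inf_{m}\alpha_{m}$, clause~\ref{enu:1limsup} comes from choosing $N$ with $\alpha_{N}\le\iota+\diff{\rho}^{q}$ and using $x_{n}\le\alpha_{N}$, while clause~\ref{enu:2limsup} comes from applying the close-supremum approximation~\eqref{eq:2defSup} to $\alpha_{N}\ge\iota$. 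Conversely the two clauses rebuild $\inf_{m}\alpha_{m}=\iota$, the only non-classical point being the upgrade of $\alpha_{m}\ge\iota-\diff{\rho}^{q}$ (all $q$) to $\alpha_{m}\ge\iota$ via Lemma~\ref{lem:negationsSubpoints}.\ref{enu:neg-le} and Lemma~\ref{lem:mayer}, since an invertible positive gap would dominate some $\diff{\rho}^{k}$ on a subpoint.

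The inequalities~\ref{enu:limsupSuperAdd} and~\ref{enu:limsupSuperProd} follow one template: bound $x_{n}$ and $y_{n}$ from above by clause~\ref{enu:1limsup} past a common threshold $N=N_{x}\vee N_{y}$, evaluate at an index $\bar{n}\ge N$ supplied by clause~\ref{enu:2limsup} for the combined hypersequence, and let $q$ vary to discard the $O(\diff{\rho}^{q})$ remainders; in the product case one uses $x_{n},y_{n}\ge0$ and moderateness of $\iota_{x},\iota_{y}$ to absorb the cross term $(\iota_{x}+\iota_{y})\diff{\rho}^{q}$. For the two ``in particular'' clauses the extra hypothesis is what realizes the lower bound~\ref{enu:2limsup} for the combination: choosing $\bar{n},\hat{n}\ge N$ that witness~\ref{enu:2limsup} for $x$ and $y$ separately, the hypothesis produces a single $n\ge N$ with $x_{\bar{n}}+y_{\hat{n}}\le x_{n}+y_{n}$ (resp.\ the product), so $\iota_{x}+\iota_{y}$ satisfies both clauses and is the limit superior of the sum by uniqueness~\ref{enu:limsupUnique}.

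The substance lies in~\ref{enu:limsupSubseqNec}--\ref{enu:limsupSubseqSuff}. For~\ref{enu:limsupSubseqNec} I would build $(\bar{n}_{q})_{q}$ recursively (with an obvious initialization): fix increasing thresholds $N_{q}$ from clause~\ref{enu:1limsup} and set $K_{q}:=[\text{int}(\sigma_{\eps}^{-q})]\in\hypNs$, which diverges in $\RC{\sigma}$; clause~\ref{enu:2limsup} applied with $N:=N_{q}\vee(\bar{n}_{q-1}+1)\vee K_{q}$ gives $\bar{n}_{q}$ with $x_{\bar{n}_{q}}\ge\iota-\diff{\rho}^{q}$, and $\bar{n}_{q}\ge N_{q}$ forces $x_{\bar{n}_{q}}\le\iota+\diff{\rho}^{q}$, so~\ref{enu:limsupSub_c} holds, while $\bar{n}_{q}>\bar{n}_{q-1}$ and $\bar{n}_{q}\ge K_{q}$ give~\ref{enu:limsupSub_a} and~\ref{enu:limsupSub_b}. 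The converse~\ref{enu:limsupSubseqSuff} is where I expect the real difficulty. Setting $\iota:=\lim_{q}x_{\bar{n}_{q}}$, clause~\ref{enu:2limsup} is painless (by~\ref{enu:limsupSub_b} find $\bar{n}_{p}\ge N$, by~\ref{enu:limsupSub_c} keep $x_{\bar{n}_{p}}$ within $\diff{\rho}^{q}$ of $\iota$), but clause~\ref{enu:1limsup} is the obstruction: because $\hypNs$ is not totally ordered I cannot simply ``go far enough along the subsequence''. The key is to combine~\eqref{eq:seqSuff} with the \emph{strict} monotonicity~\ref{enu:limsupSub_a}: given $q$, pick $Q$ with $x_{\bar{n}_{p}}\le\iota+\diff{\rho}^{q}$ for all $p\ge Q$ and put $N:=\bar{n}_{Q}$; for any $n\ge N$, \eqref{eq:seqSuff} yields $p$ with $\bar{n}_{p}\ge n\ge\bar{n}_{Q}$ and $x_{n}\le x_{\bar{n}_{p}}$, and strict monotonicity forces $p\ge Q$ (otherwise $\bar{n}_{p}<\bar{n}_{Q}$ by transitivity of $<$), whence $x_{n}\le x_{\bar{n}_{p}}\le\iota+\diff{\rho}^{q}$. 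This yields clause~\ref{enu:1limsup} and completes the proof.
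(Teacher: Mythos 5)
Your proposal is correct and follows essentially the same route as the paper's proof for every item: the same pairing of the two defining clauses for uniqueness and for the sub\-additivity estimates, the same reduction of (ii) to the monotone-hypersequence theorem, the identical recursive choice $N:=N_{q}\vee(\bar{n}_{q-1}+1)\vee[\text{int}(\sigma_{\eps}^{-q})]$ in (vii), and the same ``$\bar{n}_{p}\ge\bar{n}_{Q}$ forces $p\ge Q$ by strict monotonicity'' step in (viii). The only cosmetic difference is that you establish uniqueness by a direct two-sided estimate $|\iota_{1}-\iota_{2}|\le\diff{\rho}^{q}$ for all $q$, where the paper argues by contradiction via the subpoint trichotomy.
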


\begin{proof}
\ref{enu:limsupUnique}: Let $\iota_{1}$, $\iota_{2}$ be both limit
superior of $(x_{n})_{n}$. Based on Lem.~\ref{lem:trich1st}.\ref{enu:trichotomy},
without loss of generality we can assume that $\iota_{1}\sbpt{<}\iota_{2}$.
According to Lem.~\ref{lem:mayer}, there exists $m\in\N$ such that
$\iota_{1}+\diff{\rho}^{m}\sbpt{<}\iota_{2}$. Take $q_{1}$, $q_{2}$
large enough so that $\diff{\rho}^{q_{1}}+\diff{\rho}^{q_{2}}<\diff{\rho}^{m}$.
Using the last two inequalities, we obtain
\begin{equation}
\iota_{1}+\diff{\rho}^{q_{1}}\sbpt{<}\iota_{2}-\diff{\rho}^{q_{2}}.\label{eq:q1q2}
\end{equation}
Using Def.~\ref{def:limsup-inf}.\ref{enu:1limsup}, we can find
$N_{1}\in\hypNs$ such that
\begin{equation}
\forall n\in\hypNs_{\ge N_{1}}:\ x_{n}\le\iota_{1}+\diff{\rho}^{q_{1}}.\label{eq:i_iota1}
\end{equation}
Using Def.~\ref{def:limsup-inf}.\ref{enu:2limsup} with $q=q_{1}$
and $N=N_{1}$, we get
\begin{equation}
\exists\bar{n}\in\hypNs_{\ge N_{1}}:\ \iota_{2}-\diff{\rho}^{q_{2}}\le x_{\bar{n}}.\label{eq:ii_iota2}
\end{equation}
We now use \eqref{eq:q1q2}, \eqref{eq:ii_iota2} and \eqref{eq:i_iota1}
for $n=\bar{n}$ and we obtain $x_{\bar{n}}\le\iota_{1}+\diff{\rho}^{q_{1}}\sbpt{<}\iota_{2}-\diff{\rho}^{q_{2}}\le x_{\bar{n}}$,
which is a contradiction.

\ref{enu:limsupClassicalDef}: Lem.~\ref{lem:propSupInf}.\ref{enu:incr}
implies that $(\alpha_{m})_{m}$ is non-increasing. Therefore, we
have $\hyperlim{\rho}{\sigma}\alpha_{m}=\inf\left\{ \alpha_{m}\mid m\in\hypNs\right\} $
if these terms exist from Thm.~\ref{thm:monotonicHyperseq}. But
Cor.~\ref{cor:supLUB} and Def.~\ref{def:limsup-inf}.\ref{enu:1limsup}
imply $\alpha_{m}\le\iota+\diff{\rho}^{q}$. Finally, Def.~\ref{def:limsup-inf}.\ref{enu:2limsup}
yields $\iota-\diff{\rho}^{q}\le x_{\bar{n}}\le\alpha_{m}$, which
proves that $\exists\,\hyperlim{\rho}{\sigma}\alpha_{m}=\hyplimsup{\rho}{\sigma}x_{m}=\iota$.

\ref{enu:limsupLimInf}: Directly from Def.~\ref{def:limsup-inf}.

\ref{enu:limsupLim}: Assume that hyperlimit superior and inferior
exist and are equal to $l$. From Def.~\ref{def:limsup-inf}.\ref{enu:1limsup}
and Def.~\ref{def:limsup-inf}.\ref{enu:1liminf} we get $l-\diff{\rho}^{q}\le x_{n}\le l+\diff{\rho}^{q}$
for all $n\ge N$. Vice versa, assume that the hyperlimit exists and
equals $l$, so that $l-\diff{\rho}^{q}\le x_{n}\le l+\diff{\rho}^{q}$
for all $n\ge N$. Then both Def.~\ref{def:limsup-inf}.\ref{enu:1limsup}
and Def.~\ref{def:limsup-inf}.\ref{enu:1liminf} trivially hold.
Finally, Def.~\ref{def:limsup-inf}.\ref{enu:2limsup} and Def.~\ref{def:limsup-inf}.\ref{enu:2liminf}
hold taking e.g.~$\bar{n}=N$.

\ref{enu:limsupSuperAdd}: Setting
\begin{align*}
\iota & :=\hyplimsup{\rho}{\sigma}x_{n}\\
j & :=\hyplimsup{\rho}{\sigma}y_{n}\\
l & :=\hyplimsup{\rho}{\sigma}\left(x_{n}+y_{n}\right),
\end{align*}
from Def.~\ref{def:limsup-inf} we get $l-\diff{\rho}^{q}\le x_{\bar{n}}+y_{\bar{n}}\le\iota+j+2\diff{\rho}^{q}$,
which implies $l\le\iota+j$ for $q\to+\infty$. Adding Def.~\ref{def:limsup-inf}.\ref{enu:2limsup}
we obtain $\iota+j-2\diff{\rho}^{q}\le x_{\bar{n}}+y_{\hat{n}}$ for
some $\bar{n}$, $\hat{n}\ge N\in\hypNs$. Therefore, if $x_{\bar{n}}+y_{\hat{n}}\le x_{n}+y_{n}$
for some $n\ge N$, this yields the second claim. Similarly, one can
prove \ref{enu:limsupSuperProd}.

\ref{enu:limsupSubseqNec}: From Def.~\ref{def:limsup-inf}.\ref{enu:1limsup},
choose an $N_{q}=N$ for each $q\in\N$, i.e.
\begin{equation}
\forall q\in\N\,\exists N_{q}\in\hypNs\,\forall n\ge N_{q}:\ x_{n}\le\iota+\diff{\rho}^{q}.\label{eq:seq}
\end{equation}
Applying Def.~\ref{def:limsup-inf}.\ref{enu:2limsup} with $q>0$
and $N=N_{q}\vee\left(\bar{n}_{q-1}+1\right)\vee\left[\text{int}(\sigma_{\eps}^{-q})\right]\in\hypNs$,
we get the existence of $\bar{n}_{q}\ge N_{q}$ such that both \ref{enu:limsupSub_a}
and \ref{enu:limsupSub_b} hold and $\iota-\diff{\rho}^{q}\le x_{\bar{n}_{q}}$.
Thereby, from \eqref{eq:seq} we also get \ref{enu:limsupSub_c}.

\ref{enu:limsupSubseqSuff}: Write \ref{enu:limsupSub_c} as
\begin{equation}
\forall q\in\N\,\exists Q_{q}\in\N\,\forall p\in\N_{\ge Q_{q}}:\ \iota-\diff{\rho}^{p}\le x_{\bar{n}_{p}}\le\iota+\diff{\rho}^{p}.\label{eq:sublim}
\end{equation}
Set $N:=\bar{n}_{Q_{q}}\in\hypNs$. For $n\ge N$, from \eqref{eq:seqSuff}
we get the existence of $p\in\N$ such that $\bar{n}_{p}\ge n$ and
$x_{n}\le x_{\bar{n}_{p}}$. Thereby, $\bar{n}_{p}\ge\bar{n}_{Q_{q}}$
and hence $p\ge Q_{q}$ because of \ref{enu:limsupSub_a} and thus
$x_{n}\le x_{\bar{n}_{p}}\le\iota+\diff{\rho}^{q}$. Finally, condition
\ref{enu:2limsup} of Def.~\ref{def:limsup-inf} follows from \eqref{eq:sublim}
and \ref{enu:limsupSub_b}.
\end{proof}
It remains an open problem to show an example that proves as necessary
the assumption of Thm\@.~\ref{thm:limsup-inf}.\ref{enu:limsupClassicalDef},
i.e.~that that the previous definition of limit superior and inferior
is strictly more general than the simple transposition of the classical
one.
\begin{example}
\label{exa:limisupinf}~
\begin{enumerate}
\item Directly from Def.~\ref{def:limsup-inf}, we have that
\[
\hyplimsup{\rho}{\sigma}(-1)^{n}=1,\ \hypliminf{\rho}{\sigma}(-1)^{n}=-1
\]
\item Let $\mu\in\rcrho$ be such that $\mu|_{L}=1$ and $\mu|_{L^{c}}=-1$,
where $L$, $L^{c}\subzero I$. Then $\mu^{n}\le1$ and $1-\diff{\rho}^{q}\le\mu^{\bar{n}}$
if $\nint{(\bar{n})}_{\eps}$ is even for all $\eps$ small. Therefore
$\hyplimsup{\rho}{\sigma}\mu^{n}=1$, $\sup_{n\ge m}\mu^{n}=1$, whereas
$\nexists\,\hyperlim{\rho}{\sigma}\mu^{n}$.
\item From \ref{enu:limsupSubseqNec} and \ref{enu:limsupSubseqSuff} of
Thm.~\ref{thm:limsup-inf} it follows that for an increasing hypersequence
$(x_{n})_{n}$, $\exists\hyplimsup{\rho}{\sigma}x_{n}$ if and only
if $\exists\hyperlim{\rho}{\sigma}x_{n}$. Therefore, example \ref{exa:rootsInf}
implies that $\nexists\,\hyplimsup{\rho}{\sigma}\diff{\rho}^{1/n}$.
\end{enumerate}
\end{example}

\section{Conclusions}

In this work we showed how to deal with several deficiencies of the
ring of Robinson-Colombeau generalized numbers $\rcrho$: trichotomy
law for the order relations $\le$ and $<$, existence of supremum
and infimum and limits of sequences with a topology generated by infinitesimal
radii. In each case, we obtain a faithful generalization of the classical
case of real numbers. We think that some of the ideas we presented
in this article can inspire similar works in other non-Archimedean
settings such as (constructive) nonstandard analysis, p-adic analysis,
the Levi-Civita field, surreal numbers, etc. Clearly, the notions
introduced here open the possibility to extend classical proofs in
dealing with series, analytic generalized functions, sigma-additivity
in integration of generalized functions, non-Archimedean functional
analysis, just to mention a few.
\begin{description}
\item [{Acknowledgments}] The authors would like to thank L.~Luperi Baglini
for the proof of Thm.~\ref{thm:Cauchy}, H.~Vernaeve for several
suggestions concerning Sec.~\ref{subsec:Archimedean-upper-bound},
D.E.~Kebiche for an improvement of Thm.~\ref{thm:limsup-inf}, and
the referee for several suggestions that have led to considerable
improvements of the paper.
\end{description}

\end{document}